\swapnumbers \numberwithin{equation}{section}
\theoremstyle{plain}
\newtheorem{thm}{Theorem}[section]
\newtheorem{theorem}[thm]{Theorem}
\newtheorem{lem}[thm]{Lemma}
\newtheorem{lemma}[thm]{Lemma}
\newtheorem{prop}[thm]{Proposition}
\newtheorem{cor}[thm]{Corollary}
\theoremstyle{definition}
\newtheorem{defin}[thm]{Definition}
\newtheorem{rem}[thm]{Remark}
\DeclareMathOperator{\cat}{{\mbox{\rm cat$_{\rm LS}$}}}
\def\Z{{\mathbb Z}}
\def\N{{\mathbb N}}
\def\1{\hbox{\rm\rlap {1}\hskip.03in{\rom I}}}
\def\Bbbone{{\rm1\mathchoice{\kern-0.25em}{\kern-0.25em}
{\kern-0.2em}{\kern-0.2em}I}}
\long\def\forget#1\forgotten{} %
\newcommand\ver[1]{\marginpar{\tiny Changed in Ver \VER}}
\date{\today}
\begin{document}

\title[On Iwase manifolds]{On Iwase's manifolds }

\author[A.~Dranishnikov]{Alexander  Dranishnikov} %
%

\address{Alexander N. Dranishnikov, Department of Mathematics, University
of Florida, 358 Little Hall, Gainesville, FL 32611-8105, USA}
\email{dranish@math.ufl.edu}

\subjclass[2000]
{Primary 55M30; 
Secondary 53C23,  
57N65  
}

\maketitle

\begin{abstract} 
In ~\cite{Iw2} Iwase has constructed two 16-dimensional manifolds $M_2$ and $M_3$ with LS-category 3 which are counter-examples to Ganea's conjecture: $\cat (M\times S^n)=\cat M+1$.
We show that the manifold $M_3$ is a counter-example to the logarithmic law for the LS-category of the square of a manifold: $\cat(M\times M)=2\cat M$. Also we construct a map of degree one
$$f:2(M_3\times S^2\times S^{14})\#-(M_2\times S^2\times S^{14})\to M_2\times M_3$$ which reduces  Rudyak's conjecture to the question whether $\cat(M_2\times M_3)\ge 5$ and
show that $\cat(M_2\times M_3)\ge 4$.
\end{abstract}

\section{Introduction}

The Lusternik-Schnirelmann category $\cat X$ is a celebrated numerical invariant of topological spaces $X$ which was introduced in the late 30s of the last century.
By  definition $\cat X$ is the minimal number $k$ such that $X$ can be covered by $k+1$ open sets $U_0,\dots, U_k$ such that each $U_i$ is contractible in $X$.
This invariant is of special importance when $X$ is a closed manifold, since it brings a lower bound on the number of critical points of  smooth functions on $X$~\cite{CLOT} Section 1.3.

It turns out that $\cat$ behaves differently with respect to two basic operations on manifolds, the connected sum and the cartesian product. In the case of connected sum there is a natural formula~\cite{DS1},\cite{DS2}: $$(\ast)\ \ \ \cat(M\# N)=\max\{\cat M,\cat N\}.$$ In the case of the product the LS-category behavior is weird.
There was the upper bound formula $$\cat (X\times Y)\le\cat X+\cat Y$$ since the late 30s~\cite{Ba},~\cite{F}. 
Since then there was a natural question for which spaces $X$ and $Y$ this upper bound is attained.
In particular, there was a  longstanding  conjecture
of Ganea that $$\cat(X\times S^n)=\cat X+1$$ for all $X$.
The Ganea Conjecture was verified for many classes of spaces $X$, yet it turned out to be false.
At the end of the last century
Noiro Iwase had constructed counterexamples to Ganea's conjecture, first when $X$ is a finite complex~\cite{Iw} and then  when $X$ is a closed manifold~\cite{Iw1},\cite{Iw2}.
He constructed two 16-dimensional manifolds denoted by $M_2$ and $M_3$  satisfying $$\cat(M_i\times S^n)=\cat M_i=3$$ for sufficiently large $n$, $i=2,3$.
Also Iwase proved that $$\cat(M\times S^n)=\cat M$$ for all $n$ either with $M=M_3$ or with $M=M_3\times S^1$.
The reason that Iwase manifolds have indexes 2 and 3 is that their constructions are related to the 2-primary and the 3-primary components of the group $\pi_{13}(S^3)=\mathbb Z_2\oplus\mathbb Z_4\oplus\mathbb Z_3$
respectively.

In this paper we exhibit a relation of Iwase's examples to the following two intriguing problems in the LS-category. The first is a problem from~\cite{Ru1} which is also known as the Rudyak Conjecture: 

{\em For a degree one map $f:M\to N$  between closed manifolds, $\cat M\ge\cat N$.}

The second is the question (which I also attribute to Yu. Rudyak): 

{\em Does the equality $\cat(M\times M)=2\cat M$ hold true for closed manifolds $M$?}

There were partial results on both problems. Thus, the Rudyak conjecture was proven in some special cases~\cite{Ru1},\cite{Ru2},\cite{DSc}. A finite 2-dimensional complex
that does not satisfy the equality $\cat(X\times X)=2\cat X$ were constructed in~\cite{H},~\cite{St2}.

In this paper we connect both problems to Iwase's examples. In particular, we prove the following
\begin{thm}
$$\cat(M_3\times M_3)\le 5<2\cat M_3.$$
\end{thm}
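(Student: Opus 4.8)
The plan is as follows. Since Iwase proved $\cat M_3=3$, we have $2\cat M_3=6$, so it suffices to establish the upper bound $\cat(M_3\times M_3)\le 5$, the strict inequality $5<6$ being then automatic.

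The idea is to exploit Iwase's fact that $\cat(M_3\times S^n)=\cat M_3=3$ for \emph{every} $n$. Fix a CW structure on $M_3$ with a single top cell, so that $M_3=L\cup_\psi e^{16}$, where $L=M_3^{(15)}$ is the $15$-skeleton, $L\simeq M_3\setminus\{\pt\}$, and $\psi\colon S^{15}\to L$ is the attaching map; from Iwase's construction of $M_3$ one extracts that $\cat L=2$. Crossing with $M_3$ gives
$$M_3\times M_3=(L\times M_3)\cup_{\psi\times\id}(D^{16}\times M_3),$$
so $M_3\times M_3$ is obtained from $L\times M_3$ by attaching a single ``fibrewise $16$-cell'' over the second factor, along $S^{15}\times M_3$ via $\psi\times\id$. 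The product inequality gives $\cat(L\times M_3)\le\cat L+\cat M_3=2+3=5$.

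The heart of the argument is to show that this last attachment does not raise the Lusternik--Schnirelmann category, i.e. that $\cat(M_3\times M_3)=\cat(L\times M_3)\le 5$. Starting from a section of the $5$th Ganea fibration of $M_3\times M_3$ over $L\times M_3$, this reduces to an obstruction-theoretic extension problem over $D^{16}\times M_3$ relative to $S^{15}\times M_3$; the obstruction that arises is a Berstein--Hilton--Ganea--type Hopf invariant built from $\psi\times\id$. The collapse map $q\colon M_3\to M_3/L=S^{16}$ carries this attachment problem to the corresponding one for $S^{16}\times M_3=(\pt\times M_3)\cup(D^{16}\times M_3)$, whose obstruction vanishes precisely because $\cat(S^{16}\times M_3)=\cat M_3$. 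I would transfer that vanishing back along $q$, using that $q$ is a degree-one map between $16$-dimensional Poincaré complexes, so that $q^\ast\colon H^{16}(S^{16})\to H^{16}(M_3)$ is an isomorphism (and $H^q(S^{16})=0$ for $0<q<16$).

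The main obstacle is exactly this transfer step. Bare naturality under $q\times\id\colon M_3\times M_3\to S^{16}\times M_3$ is not enough, because $q$ is far from a homotopy equivalence and the induced map on Ganea fibres is not injective; what is really required is Iwase's finer analysis — his $A_\infty$/crude-Hopf-invariant machinery underlying $\cat(M_3\times S^n)=\cat M_3$ — applied with the finite complex $M_3$ in place of a sphere. Concretely one must verify that the Hopf invariant governing the potential jump of category when attaching $D^{16}\times M_3$ is ``supported on the top cell'' of $M_3$, hence is detected after collapsing $L$; the fact that Iwase's hypothesis is available for all $n$ is what lets the comparison reach the cells of $M_3$ in every dimension. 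One must also confirm the input $\cat(M_3\setminus\{\pt\})=2$ from Iwase's construction. I note that the trick cannot be iterated to peel off a second top cell, since that would need $\cat(L\times S^{16})=\cat L$, which fails generically; accordingly the bound $5$ is expected to be sharp.
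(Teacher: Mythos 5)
Your reduction $M_3\times M_3=(L\times M_3)\cup_{\psi\times\id}(D^{16}\times M_3)$ with $L=M_3^{(15)}=Q=S^2\cup_\alpha e^{14}$, $\cat Q=2$, and $\cat(Q\times M_3)\le 5$ is correct as far as it goes, and the attempted comparison with $S^{16}\times M_3$ via the collapse $q\colon M_3\to M_3/Q$ is a reasonable instinct. But the ``transfer step'' you flag is in fact a genuine gap, not a technicality, and the proposal does not fill it. Two independent problems arise. First, the obstruction you want to kill lives over $\pi_*$ of the level-$5$ Ganea fibre $F_5(M_3\times M_3)$, while Iwase's vanishing $\cat(S^{16}\times M_3)=3$ only supplies a section of the level-$3$ fibration of the target; these are different filtration levels and there is no given map transporting one obstruction class to the other. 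Second, even at a fixed level, $F_*(q\times\id)$ is far from injective on homotopy groups (it kills everything coming from $\pi_2,\pi_{13},\pi_{14}$ of the first factor), so naturality gives no injectivity and ``supported on the top cell'' is a hope rather than a fact. You would also need the section you start with over $Q\times M_3$ to be chosen compatibly with $q\times\id$, which is an additional constraint not addressed. In short, the argument as written proves only $\cat(M_3\times M_3)\le\cat(Q\times M_3)+1\le 6$; getting down to $5$ is precisely the content that is missing.

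The paper's actual proof avoids the fibrewise-cell viewpoint entirely and works cell by cell. It filters $M_3\times M_3$ by subcomplexes
\[
X_1\subset X_2\subset X_3\subset X_4\subset X_5=M_3\times M_3,
\]
with $X_3=(M_3\times S^2)\cup(S^2\times M_3)\cup(Q\times Q)$, then proves the key estimate $\cat X_3\le 3$ and finishes with the elementary observation that attaching cells raises $\cat$ by at most $1$ at each of the two remaining steps, giving $\cat X_5\le 5$. The heart is the construction of an explicit section of $\hat G_3(M_3\times M_3)$ over $X_3$: it is assembled by gluing the section Iwase built over $M_3\times S^2$ (for the two $18$-cells) with the one Harper built over $Q\times Q$ (for the $28$-cell), and then the vanishing of the relevant Berstein--Hilton invariants, $\Sigma^4 H_1(\alpha)=0$ and $\Sigma^4\psi=0$, makes the three Fibration--Cofibration lifting problems solvable. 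Thus both your sketch and the paper ultimately rest on the same arithmetic input from Toda (Proposition~\ref{toda-p}), but the paper's route makes the gluing explicit and stays within ordinary cell attachments, whereas your comparison-along-$q$ approach would still require a substantial new argument (essentially reproving Iwase's $A_\infty$ analysis with $M_3$ in place of $S^n$) to become a proof.

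Minor points: $\cat L=2$ is indeed correct, by Theorem~\ref{BH} applied to $Q=S^2\cup_\alpha e^{14}$ together with $H_1(\alpha)=\psi\ne 0$. Your closing remark that the trick cannot be iterated is true but beside the point, since the bound $5$ you need is already what a successful single application would give.
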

We connect the Rudyak Conjecture with computation of $\cat(M_2\times M_3)$. Namely, we show that if
$\cat(M_2\times M_3)\ge 5$, then there is  a counterexample to the Rudyak Conjecture. 
In this paper we managed to  prove only the inequality $\cat(M_2\times M_3)\ge 4$.

\section{Preliminaries}

\subsection{Ganea-Schwarz approach to the LS-category}
 An element of an iterated join $X_0*X_1*\cdots*X_n$ of topological spaces is a formal linear combination $t_0x_0+\cdots +t_nx_n$ of points $x_i\in X_i$ with $\sum t_i=1$, $t_i\ge 0$, in which all terms of the form $0x_i$ are dropped. Given fibrations $f_i: X_i\to Y$ for $i=0, ..., n$, the fiberwise join of spaces $X_0, ..., X_n$ is defined to be the space
\[
    X_0*_Y\cdots *_YX_n=\{\ t_0x_0+\cdots +t_nx_n\in X_0*\cdots *X_n\ |\ f_0(x_0)=\cdots =f_n(x_n)\ \}.
\]
The fiberwise join of fibrations $f_0, ..., f_n$ is the fibration 
\[
    f_0*_Y*\cdots *_Yf_n: X_0*_YX_1*_Y\cdots *_YX_n \longrightarrow Y
\]
defined by taking a point $t_0x_0+\cdots +t_nx_n$ to $f_i(x_i)$ for any $i$. As the name `fiberwise join' suggests, the fiber of the fiberwise join of fibrations is given by the join of fibers of fibrations. 

When $X_i=X$ and $f_i=f:X\to Y$ for all $i$  the fiberwise join of spaces is denoted by $*^{n+1}_YX$ and the fiberwise join of fibrations is denoted by $*_Y^{n+1}f$. 
For a path connected topological space $X$, we turn an inclusion of a point $*\to X$ into a fibration $p_0^X:G_0X\to X$. The $n$-th Ganea space of $X$ is defined to be the space $G_nX=*_X^{n+1}G_0X$, while the $n$-th Ganea fibration $p_n^X:G_nX\to X$ is the fiberwise join of fibrations $p_0^X:G_nX\to X$.
Thus, the fiber $F_nX$ of $p_n^X$ is the iterated join product $F_nX=\ast^{n+1}\Omega X$ of the loop space of $X$.

 The following theorem is called Ganea's characterization of the LS-category [Ga]. It was proven first in~\cite{Sch} in a greater generality.
\begin{theorem}  
 Let $X$ be a connected  CW-complex. Then $\cat(X)\le n$ if and only if the fibration $p_n^X:G_nX\to X$ admits a section. 
\end{theorem}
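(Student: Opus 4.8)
The plan is to prove the equivalence by routing it through Schwarz's notion of sectional category (genus) of a fibration, following \cite{Sch}. For a fibration $f\colon E\to B$ let $\operatorname{secat}(f)$ denote the least $k$ such that $B$ has an open cover $U_0,\dots,U_k$ over each of which $f$ admits a (local) section. The argument then breaks into three parts: (i) $\cat X\le n\iff\operatorname{secat}(p_0^X)\le n$; (ii) Schwarz's theorem, that $\operatorname{secat}(f)\le n$ if and only if the $(n+1)$-fold fiberwise join $*_X^{n+1}f$ admits a global section; (iii) the identification $p_n^X=*_X^{n+1}p_0^X$, which is precisely the definition of the Ganea fibration recalled above. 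Chaining these equivalences yields $\cat X\le n\iff\operatorname{secat}(p_0^X)\le n\iff p_n^X$ admits a section.

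For (i) the crucial point is the lifting criterion for $p_0^X\colon G_0X\to X$. Since $G_0X$ is the mapping-path space of the inclusion $*\to X$, it is contractible, and a map $g\colon Y\to X$ lifts through $p_0^X$ if and only if $g$ is null-homotopic: a lift is the same as a continuously varying choice of path from the basepoint to $g(y)$, i.e.\ a null-homotopy of $g$, and conversely a null-homotopy produces such a lift. Hence, if $\cat X\le n$ with cover $U_0,\dots,U_n$ by sets contractible in $X$, each inclusion $U_i\hookrightarrow X$ is null-homotopic, so it lifts over $U_i$, giving $\operatorname{secat}(p_0^X)\le n$. The converse is the same argument in reverse: a local section of $p_0^X$ over $U_i$ composed with the contraction of $G_0X$ exhibits $U_i$ as contractible in $X$.

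For (ii), the easy direction is: given a section $s\colon X\to *_X^{n+1}G_0X$, write $s(x)=\sum_i t_i(x)e_i(x)$ in barycentric coordinates; the sets $U_i=\{x:t_i(x)>0\}$ are open, they cover $X$ because $\sum_i t_i\equiv 1$, and on $U_i$ the assignment $x\mapsto e_i(x)$ is a section of $p_0^X$, so $\operatorname{secat}(p_0^X)\le n$. For the converse, choose local sections $\sigma_i\colon U_i\to G_0X$ and a partition of unity $\{\varphi_i\}$ subordinate to $\{U_i\}$ (available since a CW-complex is paracompact); then $x\mapsto\sum_{\varphi_i(x)>0}\varphi_i(x)\,\sigma_i(x)$ is a well-defined continuous global section of the fiberwise join, because each term used satisfies $p_0^X(\sigma_i(x))=x$. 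Together with (iii) this gives the theorem; note the section produced is an honest section of $p_n^X$, and in any case $p_n^X$ is a fibration (a fiberwise join of fibrations over a fixed base is again a fibration), so strict and homotopy sections agree.

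The main obstacle is the ``hard'' half of Schwarz's theorem in (ii): correctly assembling the local sections into a single global section of the fiberwise join, which uses paracompactness of $X$ to produce the partition of unity and requires some care with the point-set topology of the (possibly infinite-dimensional) join when $X$ is not compact — one may either reduce to finite skeleta or invoke Schwarz's general treatment in \cite{Sch}. A secondary point that must be handled cleanly is part (i): that $G_0X$ is genuinely contractible and that the lifting criterion is an honest ``if and only if'', since every subsequent step rests on it.
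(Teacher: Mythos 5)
Your proof is correct and is precisely the standard Schwarz argument that the paper itself points to (it states the theorem without proof, citing [Ga] and [Sch]): identify $\cat X$ with the sectional category of the path fibration $p_0^X$, invoke the fiberwise-join characterization of sectional category via a partition of unity, and observe that $p_n^X=*_X^{n+1}p_0^X$ by definition. The point-set caveats you flag (Milnor topology on the join, paracompactness of CW-complexes, and the fact that $p_n^X$ is a genuine fibration so homotopy sections can be replaced by strict ones) are exactly the right ones and are handled as you indicate.
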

There is a chain of inclusions $G_0X\subset G_1X\subset G_2X\subset\dots$ such that each $p^X_n$ is a restriction of $p^X_{n+1}$ to $G_nX$.

We recall that a map $f:X\to Y$ is called an \emph{$n$-equivalence} if it induces isomorphisms of homotopy groups $f_*:\pi_i(X)\to\pi_i(Y)$ for $i<n$ and an epimorphism for $i=n$.

\begin{prop}[\cite{DS2}]\label{old}
 Let $f: X\to Y$ be an $s$-equivalence of pointed $r$-connected CW-complexes with $r\ge 0$. Then the induced map $f_k:F_kX\to F_kY$ of the fibers of the $k$-th Ganea spaces is a $(k(r+1)+s-1)$-equivalence. 
\end{prop}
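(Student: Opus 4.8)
The plan is to induct on $k$, exploiting the recursive structure of the Ganea fibers. Since the iterated join satisfies $F_kX=*^{k+1}\Omega X=\Omega X*(*^{k}\Omega X)=\Omega X*F_{k-1}X$, and the Ganea construction is functorial in $X$, under this identification the induced map is $f_k=f_0*f_{k-1}$ with $f_0=\Omega f\colon\Omega X\to\Omega Y$. Thus it suffices to control (i) the map $\Omega f$ together with the connectivity of $\Omega X$, and (ii) the effect of the join operation on maps. For the base case $k=0$ we have $F_0X=\Omega X$ and $f_0=\Omega f$: since $f$ is an $s$-equivalence of $r$-connected spaces, looping it shows that $\Omega f$ is an $(s-1)$-equivalence of $(r-1)$-connected spaces, and $0\cdot(r+1)+s-1=s-1$, so the statement holds for $k=0$.

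The main lemma I would isolate is a connectivity estimate for joins of maps: \emph{if $g\colon A\to A'$ is an $a$-equivalence between $p$-connected spaces and $h\colon B\to B'$ is a $b$-equivalence between $q$-connected spaces, then $A*B$ is $(p+q+2)$-connected and $g*h\colon A*B\to A'*B'$ is a $\bigl(\min\{a+q,\,b+p\}+2\bigr)$-equivalence.} The connectivity assertion is classical. For the equivalence assertion one passes (after CW approximation) to the natural homotopy equivalence $A*B\simeq\Sigma(A\wedge B)$ and factors $g\wedge h=(g\wedge\id_{B'})\circ(\id_A\wedge h)$: the cofiber of $\id_A\wedge h$ is $A\wedge C_h$, where $C_h$ is the mapping cone of $h$, and this is $(p+b+1)$-connected since $C_h$ is $b$-connected; likewise the cofiber of $g\wedge\id_{B'}$ is $C_g\wedge B'$, which is $(a+q+1)$-connected. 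By the relative Hurewicz theorem $g\wedge h$ is therefore a $\min\{a+q+1,\,b+p+1\}$-equivalence, and applying $\Sigma$ raises the index by one.

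For the inductive step, assume $f_{k-1}\colon F_{k-1}X\to F_{k-1}Y$ is a $\bigl((k-1)(r+1)+s-1\bigr)$-equivalence. Iterating the connectivity part of the lemma, $F_{k-1}X=*^{k}\Omega X$ is $\bigl(k(r+1)-2\bigr)$-connected. Now apply the lemma to $g=\Omega f$, an $(s-1)$-equivalence of $(r-1)$-connected spaces, and $h=f_{k-1}$, a $\bigl((k-1)(r+1)+s-1\bigr)$-equivalence of $\bigl(k(r+1)-2\bigr)$-connected spaces. Then $a+q=(s-1)+\bigl(k(r+1)-2\bigr)=k(r+1)+s-3$ and $b+p=\bigl((k-1)(r+1)+s-1\bigr)+(r-1)=k(r+1)+s-3$, so the two quantities coincide and $f_k=\Omega f*f_{k-1}$ is a $\bigl(k(r+1)+s-3+2\bigr)$-equivalence, i.e.\ a $\bigl(k(r+1)+s-1\bigr)$-equivalence; this closes the induction. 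As a consistency check, the connectivity of $F_kX=\Omega X*F_{k-1}X$ comes out $(k+1)(r+1)-2$, as it must since $F_kX=*^{k+1}\Omega X$.

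The step I expect to be the main obstacle is the join/smash lemma in low dimensions. The reduction $A*B\simeq\Sigma(A\wedge B)$ and the relative Hurewicz argument are cleanest when the spaces involved are simply connected, whereas for $r=0$ the loop space $\Omega X$ is merely nonempty and $F_1X$ merely path-connected; the borderline instances then have to be handled separately, using that a join of nonempty spaces is connected and a join of connected spaces is simply connected before the homological argument applies. Everything else is routine bookkeeping with connectivity indices and the identities $F_kX=\Omega X*F_{k-1}X$ and $f_k=\Omega f*f_{k-1}$.
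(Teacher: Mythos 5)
The paper does not prove this proposition; it is quoted verbatim from~\cite{DS2}, so there is no in-text argument to compare against. Your proposal is, however, essentially correct and is the natural (and almost certainly the cited reference's) route: induct on $k$ using the recursive identification $F_kX\simeq\Omega X*F_{k-1}X$ together with functoriality $f_k\simeq\Omega f*f_{k-1}$, and control joins of maps via the smash model $A*B\simeq\Sigma(A\wedge B)$, the factorization $g\wedge h=(g\wedge\id)\circ(\id\wedge h)$, and connectivity of cofibers. The bookkeeping in the inductive step is right: with $a=s-1$, $p=r-1$, $b=(k-1)(r+1)+s-1$, $q=k(r+1)-2$, both $a+q$ and $b+p$ equal $k(r+1)+s-3$, and the join lemma then returns a $(k(r+1)+s-1)$-equivalence; the connectivity of $F_kX$ also comes out to $(k+1)(r+1)-2$, as it should.

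The one place where the write-up is not yet a complete proof is exactly the one you flag at the end, and it is worth being explicit about why it is not a throwaway. The proposition is stated for $r\ge 0$, so $\Omega X$ may be disconnected. Then for $r=0$: (i) $F_1X=\Omega X*\Omega X$ is only $0$-connected, so Whitehead's theorem is not available at $k=1$; (ii) the step ``$h$ a $b$-equivalence $\Rightarrow$ $C_h$ is $b$-connected'' uses relative Hurewicz and hence needs $\pi_1$-control on the source, which fails for $\Omega f$ when $\Omega X$ is not connected; and (iii) the cofiber sequence $B\to B'\to C_h$ smashed with $A$ only has the claimed connectivity if one already knows $C_h$ is $b$-connected. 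None of this affects $k\ge 2$ (where $F_{k-1}X$ is simply connected so every space in sight is), and $k=0$ is just the looping statement, so what remains is the isolated case $k=1$, $r=0$, which has to be argued by hand (e.g.\ directly on $\pi_0,\pi_1$ of the join, or by first reducing to the path components). In the applications the paper actually makes ($M_3$, $Q$, $X^{(18)}$, all simply connected), $r\ge 1$, so the clean part of your argument already suffices; but as a proof of the stated proposition the low-dimensional patch needs to be written, not merely acknowledged.
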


The following theorem was proven by D. Stanley~\cite[Theorem 3.5]{St1}.
\begin{theorem}\label{Stanley}
 Let $X$ be a connected CW-complex with $\cat X=k>0$. Then $\cat X^{(s)}\le k$ for any $s$, where  $X^{(s)}$ is the $s$-skeleton of $X$.
\end{theorem}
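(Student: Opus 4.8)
The plan is to combine Ganea's characterization of LS-category with the naturality of the Ganea construction and the connectivity estimate of \propref{old}. Write $k=\cat X>0$ and let $i\colon X^{(s)}\hookrightarrow X$ be the inclusion of the $s$-skeleton. The claim is trivial for $s=0$, so assume $s\ge 1$; then $X^{(s)}$ is connected, since by cellular approximation $\pi_0(X^{(s)})\to\pi_0(X)$ is a bijection. By Ganea's characterization the fibration $p_k^X\colon G_kX\to X$ admits a section $\sigma\colon X\to G_kX$. Restricting $\sigma$ over $X^{(s)}$ and invoking the universal property of the pullback $P:=i^{*}G_kX$, we obtain a section $\sigma'\colon X^{(s)}\to P$ of the pulled-back Ganea fibration $p'\colon P\to X^{(s)}$, whose fiber is $F_kX$.

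Next I would bring in the comparison map. Applying naturality of the Ganea construction to $i$ yields a map $G_k(i)\colon G_kX^{(s)}\to G_kX$ lying over $i$; it factors through the pullback as a map $\phi\colon G_kX^{(s)}\to P$ with $p'\circ\phi=p_k^{X^{(s)}}$, and over the basepoint $\phi$ restricts to the induced map $i_k\colon F_kX^{(s)}\to F_kX$ on fibers. Since $i$ is an $s$-equivalence and $X^{(s)}$ and $X$ are $0$-connected, \propref{old} with $r=0$ shows that $i_k$ is a $(k+s-1)$-equivalence, so its homotopy fiber is $(k+s-2)$-connected. A routine comparison of the long exact sequences of the fibrations $p_k^{X^{(s)}}$ and $p'$ over $X^{(s)}$ identifies this homotopy fiber with $\mathrm{hofib}(\phi)$, which is therefore $(k+s-2)$-connected as well.

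The key step is to lift $\sigma'$ through $\phi$. After replacing $\phi$ by a fibration, the obstructions to lifting the map $\sigma'$ defined on the $s$-dimensional complex $X^{(s)}$ lie in the groups $H^{n+1}\!\big(X^{(s)};\pi_n(\mathrm{hofib}\,\phi)\big)$ with local coefficients. These vanish for $n\ge s$ for dimensional reasons, and for $n\le k+s-2$ by the connectivity estimate above; since $k>0$ we have $k+s-2\ge s-1$, so the two ranges together exhaust all $n\ge 0$ and every obstruction vanishes. Hence there is a map $\tilde\sigma\colon X^{(s)}\to G_kX^{(s)}$ with $\phi\circ\tilde\sigma\simeq\sigma'$, and then $p_k^{X^{(s)}}\circ\tilde\sigma=p'\circ\phi\circ\tilde\sigma\simeq p'\circ\sigma'=\mathrm{id}_{X^{(s)}}$. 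Thus $\tilde\sigma$ is a homotopy section of the fibration $p_k^{X^{(s)}}$, hence deformable to an honest section, and Ganea's characterization gives $\cat X^{(s)}\le k$.

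I expect the main obstacle to be nothing more than the numerical bookkeeping of the last paragraph --- which is also precisely where the hypothesis $k>0$ is used: the gain of $k(r+1)\ge k$ in connectivity supplied by the $k$ fiberwise-join operations in \propref{old} is exactly what is needed to close the one-dimensional gap between ``$i$ is an $s$-equivalence'' and ``$\sigma'$ lifts over the $s$-dimensional complex $X^{(s)}$''. (For $k=0$ the space $X$ is contractible while $X^{(s)}$ need not be, and indeed the argument breaks down.) The auxiliary claims --- that $G_k(i)$ factors through $P$ compatibly with the projections, and that $\mathrm{hofib}(\phi)\simeq\mathrm{hofib}(i_k)$ --- follow from the naturality of the Ganea fibrations and a standard comparison of fibration sequences.
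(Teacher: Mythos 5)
Your proof is correct and is essentially the paper's own argument written out in full: the paper likewise applies \propref{old} with $r=0$ to see that $F_kX^{(s)}\to F_kX$ is a $(k+s-1)$-equivalence and then asserts, via exactly the pullback-and-obstruction bookkeeping you make explicit, that the restricted section of $p_k^X$ yields a section of $p_k^{X^{(s)}}$ because $s\le k+s-1$. One small quibble: the case $s=0$ is not actually ``trivial'' since $X^{(0)}$ may be disconnected (so $\cat X^{(0)}$ can exceed $k$ under the open-cover definition), but the paper silently ignores this degenerate case as well, and for $s\ge 1$ your argument matches the intended one.
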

\begin{proof}
Since the inclusion $j:X^{(s)}\to X$ is an $s$-equivalence, by Proposition~\ref{old} the induced map $j_k:F_kX^{(s)}\to F_kX$
of the fibers of the $k$-th Ganea fibrations  is a $(k+s-1)$-equivalence.
Since   $s\le k+s-1$, this implies that a section of $p_k^X$ defines a section of $p_k^{X^{(s)}}$.
\end{proof}

\subsection{Berstein-Hilton invariant}
The Ganea spaces $G_nX$ admit a reasonable homotopy theoretic description~\cite{CLOT}. We mention that $G_1X$ is homotopy equivalent to $\Sigma\Omega X$ where $\Omega$ denote the loop space and $\Sigma$ the reduced suspension. Moreover, $p^X_1$ is homotopic to the composition $e\circ h$ where $h:G_1X\to\Sigma\Omega X$ is a homotopy equivalence and $e:\Sigma\Omega X\to X$ is the evaluation map, $e([\phi,t])=\phi(t)$.

There is a natural inclusion $i_Y:Y\to\Omega\Sigma Y$ which takes $y\in Y$ to the meridian loop $\psi_y$ through $y$. Note that $i_{\Omega X}$ defines a section of
the map $\Omega(e):\Omega\Sigma\Omega X\to \Omega X$. This proves the following
\begin{prop}
The loop fibration $\Omega p^X_n:\Omega G_nX\to\Omega X$ admits a canonical section $s:\Omega(X)\to\Omega G_nX$.
\end{prop}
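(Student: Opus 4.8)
The plan is to reduce the statement to the case $n=1$, where the Ganea space has the explicit model $G_1X\simeq\Sigma\Omega X$, and then to propagate the section upward along the chain of inclusions $G_1X\subset G_2X\subset\cdots$ recorded after Ganea's theorem.

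First I would treat $n=1$. By the discussion preceding the proposition, $p^X_1$ factors up to homotopy as $e\circ h$, where $h:G_1X\to\Sigma\Omega X$ is a homotopy equivalence and $e:\Sigma\Omega X\to X$ is the evaluation map. Applying the functor $\Omega$ (which preserves homotopies and homotopy equivalences), it suffices to produce a section of $\Omega e:\Omega\Sigma\Omega X\to\Omega X$. I claim that $i_{\Omega X}:\Omega X\to\Omega\Sigma\Omega X$ is such a section: for $\gamma\in\Omega X$ the loop $i_{\Omega X}(\gamma)$ is the meridian $t\mapsto[\gamma,t]$, and post-composing with $e$ gives $t\mapsto e([\gamma,t])=\gamma(t)$, so $\Omega e\circ i_{\Omega X}=\id_{\Omega X}$ on the nose. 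Transporting back through a homotopy inverse of $\Omega h$ yields a map $s'_1:\Omega X\to\Omega G_1X$ with $\Omega p^X_1\circ s'_1\simeq\id_{\Omega X}$; since $\Omega p^X_1$ is a fibration, lifting this homotopy (covering homotopy property) replaces $s'_1$ by an honest section $s_1:\Omega X\to\Omega G_1X$ of $\Omega p^X_1$.

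Next, for arbitrary $n\ge 1$, I would use the inclusion $G_1X\subset G_nX$ under which $p^X_n$ restricts to $p^X_1$. Looping gives a map $\iota:\Omega G_1X\to\Omega G_nX$ with $\Omega p^X_n\circ\iota=\Omega p^X_1$, and then $s:=\iota\circ s_1:\Omega X\to\Omega G_nX$ is the required section. The adjective ``canonical'' is inherited from the naturality of the meridian assignment $Y\mapsto i_Y$ together with the functoriality of the Ganea construction, so no choices enter beyond a homotopy inverse of $h$ and a choice of lift, which one may fix once and for all.

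The only step demanding genuine care is the passage from the strict section $i_{\Omega X}$ of $\Omega e$ to a \emph{strict} section of $\Omega p^X_1$: one must carry along the homotopy $p^X_1\simeq e\circ h$ and invoke the homotopy lifting property for the fibration $\Omega p^X_1$ to correct $s'_1$ within its homotopy class. Everything else is formal naturality and the already-noted compatibility of the Ganea fibrations with the inclusions $G_nX\subset G_{n+1}X$.
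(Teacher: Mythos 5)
Your proof is correct and takes essentially the same route as the paper: the paper likewise observes that the meridian inclusion $i_{\Omega X}$ gives a section of $\Omega e:\Omega\Sigma\Omega X\to\Omega X$ and declares that this proves the proposition. You have merely made explicit the transport along the homotopy equivalence $h$, the homotopy-lifting correction to a strict section, and the propagation from $G_1X$ to $G_nX$ via the inclusion $G_1X\subset G_nX$, all of which the paper leaves implicit.
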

\begin{cor}\label{split}
The induced map $(p^X_n)_*:\pi_k(G_nX)\to\pi_k(X)$ has a natural splitting for all $n\ge 2$.
\end{cor}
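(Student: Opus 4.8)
The plan is to obtain the splitting by simply applying homotopy groups to the canonical section produced in the preceding Proposition, and then translating through the loop--suspension adjunction.

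First I would record the section explicitly: by the Proposition, $\Omega p^X_n\colon \Omega G_nX\to\Omega X$ admits a section $s\colon\Omega X\to\Omega G_nX$ with $(\Omega p^X_n)\circ s\simeq\id_{\Omega X}$. This $s$ is assembled from the meridian inclusion $i_{\Omega X}\colon\Omega X\to\Omega\Sigma\Omega X$, the homotopy equivalence $\Sigma\Omega X\simeq G_1X$ (under which $p^X_1$ corresponds to the evaluation $e$), and the inclusion $G_1X\hookrightarrow G_nX$; each of these is natural in $X$, so $s$ is natural in $X$ as well.

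Next, apply $\pi_{k-1}(-)$ for $k\ge 1$ and use the standard natural isomorphisms $\pi_{k-1}(\Omega Y)\cong\pi_k(Y)$. Naturality of these isomorphisms applied to the map $p^X_n\colon G_nX\to X$ identifies $(\Omega p^X_n)_*$ on $\pi_{k-1}$ with $(p^X_n)_*\colon\pi_k(G_nX)\to\pi_k(X)$, so the relation $(\Omega p^X_n)_*\circ s_*=\id$ becomes $(p^X_n)_*\circ s_*=\id_{\pi_k(X)}$. Hence $s_*\colon\pi_k(X)\to\pi_k(G_nX)$ is the desired splitting, and it is natural in $X$ because $s$ is. (In fact the argument works for every $n\ge 1$, not only $n\ge 2$; the $\pi_0$ case is vacuous since $G_nX$ is connected.)

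There is essentially no obstacle here: the content was already spent in constructing the canonical section in the Proposition, and the Corollary is a formal consequence via the loop--suspension adjunction. The only point meriting a line of care is the assertion of \emph{naturality} of the splitting, which holds precisely because every ingredient of $s$ --- the meridian map $i_{(-)}$, the equivalence $G_1\simeq\Sigma\Omega$, and the inclusion $G_1\hookrightarrow G_n$ --- is itself a natural transformation, so the induced maps on homotopy groups fit into the required commuting diagrams.
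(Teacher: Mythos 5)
Your argument is exactly the implicit reasoning in the paper: the corollary is stated as an immediate consequence of the proposition giving the canonical section $s\colon\Omega X\to\Omega G_nX$, and applying $\pi_{k-1}$ together with the natural isomorphism $\pi_{k-1}(\Omega Y)\cong\pi_k(Y)$ is precisely how that splitting is meant to be extracted. Your side remark that the restriction $n\ge 2$ is not needed for this argument (it works already for $n\ge 1$) is also correct.
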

We denote by $r_*:\pi_k(G_nX)\to \pi_k(F_nX)$ the projection defined by the above splitting.

\begin{defin}\cite{BH}
Let $\alpha\in\pi_k(X)$. We define the Berstein-Hilton invariant of $\alpha$ as the family
$$
H_k(\alpha)=\{r_*\sigma_*(\alpha)\in\pi_k(F_nX)\mid \sigma:X\to G_kX \ \text{is a section}\}.
$$
We use notation $H^\sigma_k(\alpha)\in H_k(\alpha)$ for a representative of the homotopy class $[r_*\sigma_*(\alpha)]\in \pi_k(F_nX)$.
\end{defin}
Since $e:\Sigma\Omega S^n\to S^n$ has a unique homotopy section, $H_1([f])$ consists of one element for any $f:S^k\to S^n$.

By the James decomposition formula $\Sigma\Omega S^2$ is homotopy equivalent to the wedge $\bigvee_{i=1}^\infty S^{i+1}$ and for any $\alpha\in\pi_k(S^2)$
the Berstein-Hilton invariant  $H_1(\alpha)$ is the collection of the j-th James-Hopf invariants (see \cite{CLOT}, Section 6.2)
$h_j(\alpha)\in\pi_k(S^{j+1})$, $j\ge 2$. Thus, $H_1(\eta)=1\in\pi_3(S^3)=\mathbb Z$ for the Hopf bundle $\eta:S^3\to S^2$.

The Berstein-Hilton invariant can be helpful in determining whether $$\cat(X\cup_\alpha D^{n+1})\le\cat X$$ where $\alpha:\partial D^{n+1}\to Y$ is the attaching map~\cite{BH},\cite{Iw1},\cite{CLOT}:
\begin{thm}\label{BH}
Let $\cat X=k$. If $ H_k(\alpha)$ contains 0,  then $$\cat(X\cup_\alpha D^{n+1})\le k.$$
The converse holds true whenever $\dim X\le k+n-2$.
\end{thm}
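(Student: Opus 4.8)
For the whole argument I would work with the Ganea fibration $p_k^Y\colon G_kY\to Y$ and translate ``$\cat Y\le k$'' into ``$p_k^Y$ admits a section'' via Ganea's characterization of the LS-category. Write $\iota\colon X\hookrightarrow Y$ for the inclusion and $\Phi\colon D^{n+1}\to Y$ for the characteristic map of the attached cell, so $\Phi|_{\partial D^{n+1}}=\iota\circ\alpha$ and $\iota\circ\alpha$ is null-homotopic in $Y$. Functoriality of the Ganea construction supplies maps $G_k\iota\colon G_kX\to G_kY$ and $F_k\iota\colon F_kX\to F_kY$ covering $\iota$, and the canonical section of the loop fibration used in the Proposition preceding Corollary~\ref{split} is natural, so the retractions satisfy $r_*^Y\circ(G_k\iota)_*=(F_k\iota)_*\circ r_*^X$ on homotopy groups. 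These naturality facts are the glue of the proof.

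For the forward implication, suppose $0\in H_k(\alpha)$ and choose a section $\sigma\colon X\to G_kX$ of $p_k^X$ (which exists since $\cat X=k$) with $H_k^\sigma(\alpha)=r_*^X\sigma_*(\alpha)=0$ in $\pi_n(F_kX)$. Then $s_0:=G_k\iota\circ\sigma$ is a section of $p_k^Y$ over $X$. Extending $s_0$ to a section of $p_k^Y$ over $Y$ is the same as lifting $\Phi$ through $p_k^Y$ with prescribed boundary value $s_0\circ\alpha$, and since $D^{n+1}$ is built from its boundary by a single $(n+1)$-cell, the only obstruction lies in $\pi_n(F_kY)$. I would identify this obstruction with $(F_k\iota)_*H_k^\sigma(\alpha)$: pulling $G_kY$ back over the contractible disc trivializes it, the vertical part of the lift $s_0\circ\alpha$ is then a map $\partial D^{n+1}\to F_kY$ whose class is $r_*^Y\bigl[(G_k\iota)\circ\sigma\circ\alpha\bigr]=(F_k\iota)_*r_*^X\sigma_*(\alpha)$ by naturality of $r_*$ and the vanishing of $[\iota\circ\alpha]$ in $\pi_n(Y)$. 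With $H_k^\sigma(\alpha)=0$ the obstruction vanishes, $s_0$ extends, and $\cat Y\le k$.

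For the converse, assume $\cat Y\le k$ and $\dim X\le k+n-2$, and pick a section $S\colon Y\to G_kY$ of $p_k^Y$. Restricting, $S|_X$ is a section of $p_k^Y$ over $X$. The natural map $G_kX\to\iota^{*}G_kY$ of fibrations over $X$ has fiberwise homotopy fibre $\operatorname{hofib}(F_k\iota)$; since $\iota$ is an $n$-equivalence, Proposition~\ref{old} makes $F_k\iota$ a $(k+n-1)$-equivalence (and more connected if $X$ is), so $\operatorname{hofib}(F_k\iota)$ is $(k+n-2)$-connected. The obstructions to lifting the section $S|_X$ through $G_kX\to\iota^{*}G_kY$ lie in $H^{i+1}\bigl(X;\pi_i\operatorname{hofib}(F_k\iota)\bigr)$ with $i+1\le\dim X\le k+n-2$, hence in degrees where the coefficient groups vanish; thus they all vanish and produce a section $\sigma\colon X\to G_kX$ of $p_k^X$ with $G_k\iota\circ\sigma$ homotopic through sections over $X$ to $S|_X$. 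Then $(G_k\iota)_*\sigma_*(\alpha)=(S|_X)_*(\alpha)=S_*\iota_*(\alpha)=S_*(0)=0$ in $\pi_n(G_kY)$, so applying $r_*^Y$ and naturality yields $(F_k\iota)_*H_k^\sigma(\alpha)=0$; as $F_k\iota$ is a $(k+n-1)$-equivalence it is injective on $\pi_n$ (since $n\le k+n-2$ for $k\ge2$, the case $k=1$ being elementary), hence $H_k^\sigma(\alpha)=0$ and $0\in H_k(\alpha)$.

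The step I expect to be the main obstacle is the precise identification, in the forward direction, of the section-extension obstruction in $\pi_n(F_kY)$ with $(F_k\iota)_*H_k^\sigma(\alpha)$: this demands care about base points, about which trivialization of the Ganea fibration pulled back over the disc is chosen, and about the exact compatibility of the canonical loop section defining $r_*$ with that trivialization. A secondary point is setting up the fiberwise obstruction theory over $X$ in the converse with the correct local coefficient systems and checking the naturality of $r_*$; once that is done, the hypothesis $\dim X\le k+n-2$ is exactly what is needed, as it simultaneously kills the compression obstructions and, through Proposition~\ref{old}, forces $(F_k\iota)_*$ to be injective on $\pi_n$.
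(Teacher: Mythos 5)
Your argument is correct and follows essentially the same route the paper takes in Remark~\ref{rema}: the obstruction to extending the section over the attached cell is identified (via Proposition~\ref{extract}) with $F_k(\iota)_*H_k^\sigma(\alpha)\in\pi_n(F_kY)$, giving the forward implication, while the converse uses that $F_k(\iota)$ is a $(k+n-1)$-equivalence by Proposition~\ref{old} together with the hypothesis $\dim X\le k+n-2$ to compress a section of $p_k^Y$ to one of $p_k^X$ and to get injectivity on $\pi_n$. Your write-up just makes explicit the compression and naturality steps that the paper leaves implicit.
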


\subsection{Fibration-Cofibration Lifting problem}
Let
$$
\begin{CD}
FY@>i>>\bar Y @>p>>  Y\\
@. @As AA @ AgAA\\
S^n @>f>> X @>j>\subset> C(f)\\
\end{CD}
$$
be a  homotopy commutative diagram where the top row is a fibration $p:\bar Y\to Y$ with the fiber $FY$ and the bottom row is a cofibration sequence. 
The  Fibration-Cofibration Lifting problem is to find a homotopy lift $\bar s:C(f)\to\bar Y$ of $g$ extending $s$.

\begin{prop}\label{extract}
Suppose that the inclusion homomorphism $i_*:\pi_n(FY)\to\pi_n(\bar Y)$ is injective.
Then there is unique up to homotopy map $\phi:S^n\to FY$ that makes the diagram
$$
\begin{CD}
FY@>i>>\bar Y @>p>>  Y\\
@A\phi AA. @As AA @ AgAA\\
S^n @>f>> X @>j>\subset> C(f)\\
\end{CD}
$$
 homotopy commutative.

 The Fibration-Cofibration Lifting problem has a solution if and only if $\phi$ is null-homotopic.
\end{prop}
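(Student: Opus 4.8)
The plan is to reduce the Fibration-Cofibration Lifting problem to the single obstruction to extending a lift over the top cell of $C(f)=X\cup_fD^{n+1}$, and to identify that obstruction with $\phi$. First I would construct $\phi$ and prove uniqueness. Since $jf:S^n\to C(f)$ is the attaching map of the top cell of the mapping cone, it extends over the characteristic disc and is therefore null-homotopic; hence $gjf\simeq\ast$, and by the homotopy commutativity of the right-hand square $psf\simeq gjf\simeq\ast$, so that $p_*[sf]=0$ in $\pi_n(Y)$ (here $sf:S^n\to\bar Y$). By exactness of the homotopy long exact sequence of $FY\xrightarrow{i}\bar Y\xrightarrow{p}Y$ we get $[sf]\in\ker p_*=\operatorname{Im}i_*$, so there is $\phi:S^n\to FY$ with $i\phi\simeq sf$, which is exactly what makes the second diagram homotopy commutative (the two squares being part of the hypotheses). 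If $i_*$ is injective, then $i\phi\simeq sf\simeq i\phi'$ forces $[\phi]=[\phi']$, so $\phi$ is unique up to homotopy; this is also what makes the class $[\phi]$, and hence the statement ``$\phi$ is null-homotopic'', meaningful.

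For the equivalence I would first use that $p$ is a fibration together with $ps\simeq gj$ to replace $s$ by a homotopic map with $ps=gj$ on the nose. After the routine reductions (using that $X\hookrightarrow C(f)$ is a cofibration and $p$ is a fibration), a solution of the lifting problem amounts to a map $\beta:D^{n+1}\to\bar Y$ with $\beta|_{S^n}=sf$ and $p\beta=\bar g:=g|_{D^{n+1}}$, i.e. to an extension of the partial lift $sf$ of $\bar g$ across the disc. Pulling the fibration back along $\bar g$ and using that $D^{n+1}$ is contractible gives a trivialisation $\bar g^{*}\bar Y\cong D^{n+1}\times FY$ (one uses path-connectedness of $Y$ to identify the relevant fibre of $p$ with $FY$); under it $sf$ corresponds to the graph of a map $\psi:S^n\to FY$, and contracting the disc shows $i\psi\simeq sf$, whence $\psi\simeq\phi$ by the uniqueness above. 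The partial lift extends over $D^{n+1}$ if and only if its fibre-coordinate $\psi$ does, i.e. if and only if $\psi\simeq\ast$, i.e. if and only if $\phi\simeq\ast$; conversely, restricting any solution $\bar s$ to the top cell produces such an extension, hence a null-homotopy of $\psi\simeq\phi$.

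I expect the genuinely delicate part to be bookkeeping: identifying the relevant fibre of $p$ with $FY$ while keeping track of basepoints, and carrying the homotopies $p\bar s\simeq g$ and $\bar s j\simeq s$ faithfully through the strictification so that ``solution of the lifting problem'' really does reduce to ``extension of $sf$ over the disc lifting $\bar g$''. The one conceptually essential hypothesis is the injectivity of $i_*$: it is precisely what collapses the a priori coset of possible maps $\phi$ to a single homotopy class, so that the obstruction produced by the cell-by-cell argument can be named unambiguously.
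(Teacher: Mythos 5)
Your proof is correct and, at its core, is the same obstruction-theoretic argument as the paper's, though you package the two halves differently. For the existence and uniqueness of $\phi$ the paper lifts the canonical null-homotopy of $gjf$ (coming from the cone on the top cell) through the fibration, and declares $\phi$ to be the end of the lifted homotopy; you instead invoke exactness of the homotopy LES of the fibration $FY\to\bar Y\to Y$ at $\pi_n(\bar Y)$ to produce $\phi$ abstractly from $[sf]\in\ker p_*$. These are the same thing --- exactness is proved precisely by lifting null-homotopies --- but the paper's version keeps track of a specific representative, which is convenient for the second half. For the second half, the paper again works directly with the lifted homotopy $\bar H$: a solution $\bar s$ of the lifting problem makes the end $\bar H|_{S^n\times 1}=\phi$ a constant map, and conversely gluing $\bar H$ to a contraction of $\phi$ inside $FY$ produces $\bar s$ over the cone. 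You instead strictify $ps\simeq gj$ to an equality, reduce the lifting problem to extending the partial lift $sf$ of $\bar g$ over $D^{n+1}$, pull the fibration back to the contractible disc, trivialise $\bar g^*\bar Y\cong D^{n+1}\times FY$, and read off the obstruction as the null-homotopy class of the fibre coordinate $\psi\simeq\phi$. Both routes are valid; yours leans on standard machinery (strictification, fibration pull-back, trivialisation over a disc), which makes the bookkeeping you flag at the end a real but routine cost, while the paper's is shorter because it constructs the relevant homotopies by hand and never needs to trivialise anything. The one thing worth being explicit about in your write-up, as in the paper's, is that identifying ``the fibre over $\bar g(\mathrm{cone\ point})$'' with the model fibre $FY$ uses a pointed setup (or at least path-connectedness of $Y$), which holds in all the applications.
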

\begin{proof}
A canonical homotopy of $jf$ to a constant map defines a homotopy $H:S^n\times I\to Y$ of  $gjf$ to a constant map. There is a  lift $\bar H:S^n\times I\to\bar Y$ with $\bar H|_{S^n\times 0}$ homotopic to $sf$.
Then $\phi=\bar H|_{S^n\times 1}$. The injectivity condition implies the uniquenes of $\phi$.

Clearly, a lift $\bar s:C(f)\to\bar Y$ defines the lift $\bar H$ such that $\phi=\bar H|_{S^n\times 1}$ is a constant map to $FY$.

Now assume that $\phi=\bar H|_{S^n\times 1}$ is null-homotopic. Then the homotopy $\bar H$, and a contraction of $\bar H|_{S^n\times 1}$ to a point in $FY$ define $\bar s$.

\end{proof}

\begin{rem}\label{rema}
When $\cat X\le k$ we fix a section $\sigma:X\to G_kX$ of $p_k^X$ and consider
 the following Fibration-Cofubration Lifting problem. 
$$
\begin{CD}
F_kY@>i>>G_kY @>p>>  Y\\
@A\phi AA. @As AA @ A=AA\\
S^n @>f>> X @>j>\subset> C(f)\\
\end{CD}
$$
where $s=G_k(j)\circ\sigma$ and $j:X\to Y=X\cup_\alpha D^{n+1}$ is the inclusion.
Since $p_k$ is a split surjection for homotopy groups, the  condition of Proposition~\ref{extract} is satisfied.
Moreover, $\phi$ is homotopic to $F_k(j)\circ H_k(\alpha)$.  This explains the first part of Theorem~\ref{BH}.
By Proposition~\ref{old}, $F_k(j):F_kX\to F_kY$ is a $(k+n-1)$-equivalence. This explains the second  part.
\end{rem}

\subsection{The LS-category of spherical bundles} 
Let $q:M\to S^{t+1}$ be a locally trivial bundle with fiber $S^r$. Let $g:(D^{t+1},\partial D^{t+1})\to (S^{t+1},x_0)$ be the quotient map. Then $M$ is the image under the pull-back map $\bar g:D^{t+1}\times S^r\to M$.
Let $\Psi:S^t\times S^r\to S^r$ be the restriction of $\bar g$ to $\partial D^{t+1}\times S^r$.
Then $M=S^r\cup_\Psi  (D^{t+1}\times S^r$). We consider the standard CW complex structures on $D^{t+1}=S^t\cup e^{t+1}$ and $S^r=\ast\cup e^r$. Then
$D^{t+1}\times S^r=(S^t\times S^r)\cup (e^{t+1}\times\ast)\cup (e^{t+1}\times e^r)$.  Thus,  we obtain $$M=S^r\cup_\alpha e^{t+1}\cup_\psi e^{r+t+1}$$ with $\alpha=\Psi|_{S^t\times \ast}$ where the spere $S^r$ is identified with $q^{-1}(x_0)$.

If $q':M'\to S^{t'+1}$ is the pull-back of $q$ with respect to the suspension of a map $f:S^{t'}\to S^t$, then $\Psi'$ factors through $\Psi$ and $\alpha'=\alpha\circ f$.

Clearly, $\cat M\le 3$. The category of $M$ and in some cases the category of the product $M\times S^n$ can be computed in terms of the attaching map $\alpha$ in view of the following.
\begin{thm}[\cite{Iw2}]\label{cat}
Let $t>r>1$ and $H_1(\alpha)\ne 0$. Then
\begin{itemize}
\item $\cat M=3 $ if and only if $\Sigma^rH_1(\alpha)\ne 0$.
\item $\Sigma^{n+r}H_1(\alpha)=0$ implies $\cat(M\times S^n)=3$.
\item $\Sigma^{n+r+1}h_2(\alpha)\ne 0$ implies $\cat(M\times S^n)=4$.
\end{itemize}
\end{thm}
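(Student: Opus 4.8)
The plan is to deduce every assertion from Ganea's characterization of the LS--category by the method of the Fibration--Cofibration Lifting problem (\propref{extract}, \remref{rema}): a section of a Ganea fibration is built over a skeleton and one measures the obstruction to extending it over the remaining top cell, this obstruction being a (suspended) Berstein--Hilton invariant. Throughout put $X_1=S^r\cup_\alpha e^{t+1}$, so that $M=X_1\cup_\psi e^{r+t+1}$ and, since $r>1$, $X_1$ is exactly the skeleton $M^{(t+1)}$.

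\emph{The first bullet.} As $H_1(\alpha)\ne 0$ and $H_1$ of a map of spheres consists of a single element, the contrapositive of the first part of \theoref{BH} (with $k=1$) gives $\cat X_1>1$; since $X_1$ has cone length $2$, $\cat X_1=2$. The bound $\cat M\le 3$ is clear, while $\cat M\ge 2$ follows, say, from $\cuplength M\ge 2$ (the product of the generators of $H^r(M)$ and $H^{t+1}(M)$ is a generator of $H^{r+t+1}(M)$, the Euler class of the bundle vanishing because $r+1\le t$), or from \theoref{Stanley} applied to $X_1=M^{(t+1)}$. Now apply \theoref{BH} to $M=X_1\cup_\psi e^{r+t+1}$ with $k=2$; its converse part applies since $\dim X_1=t+1\le r+t$, so $\cat M\le 2$ if and only if $0\in H_2(\psi)$. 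Thus the first bullet is reduced to the equivalence $0\in H_2(\psi)\iff\Sigma^rH_1(\alpha)=0$.

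\emph{The crux: identifying $H_2(\psi)$.} This is where the sphere--bundle structure enters, and it is the main obstacle. From $M=S^r\cup_\Psi(D^{t+1}\times S^r)$ and the product cell structure $D^{t+1}\times S^r=(S^t\times S^r)\cup(e^{t+1}\times\ast)\cup(e^{t+1}\times e^r)$, the top cell of $M$ is $e^{t+1}\times e^r$, and its attaching map $\psi$ splits over $\partial(D^{t+1}\times D^r)=(D^{t+1}\times S^{r-1})\cup(S^t\times D^r)$: on the first piece it factors through the characteristic map of $e^{t+1}$ (whose boundary is $\alpha$), on the second it maps into the fibre $S^r\subset X_1$ via $\Psi$. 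Tracking $\psi$ together with a section of $p_2^{X_1}$ through the loop--space and fibrewise--join constructions exhibits each representative $H^\sigma_2(\psi)$ as the image of $H_1(\alpha)$ --- smashed with the fibre $S^r$, which accounts for the shift of degree by $r$ --- under a natural homomorphism into $\pi_{r+t}(F_2X_1)$. One then checks, using \propref{old} and the comparison of Ganea fibres (the map $F_2X_1\to F_2M$ is a $(3r+t-1)$--equivalence, and $r+t<3r+t-1$), that this homomorphism is injective in the relevant range, so that $0\in H_2(\psi)$ exactly when $\Sigma^rH_1(\alpha)=0$. This matching of the Berstein--Hilton obstruction of the top cell of a sphere bundle with a suspension of the James--Hopf data of its clutching map is the delicate point.

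\emph{The second and third bullets.} For $M\times S^n$, whose unique top cell is $e^{r+t+1}\times e^n$, one runs the analogous lifting analysis for the fibration $p_3^{M\times S^n}$: the section is constructed skeleton by skeleton, the intermediate obstructions vanishing for connectivity reasons, and the final obstruction over the top cell is identified, exactly as in the crux, with an $(n+r)$--fold suspension of $H_1(\alpha)$ (the extra smash factor $S^n$ contributing the additional $n$). Hence $\Sigma^{n+r}H_1(\alpha)=0$ yields a section and $\cat(M\times S^n)\le 3$, while $\cuplength(M\times S^n)\ge 3$ gives the reverse inequality, proving the second bullet. For the third bullet one keeps track of more of the obstruction: the component of $H_1(\alpha)$ on the bottom sphere $S^{2r-1}$ of $F_1S^r$ carries the second James--Hopf invariant $h_2(\alpha)$, and chasing it through the third Ganea stage of $M\times S^n$ produces a fixed class $\pm\Sigma^{n+r+1}h_2(\alpha)$, independent of the choice of section; since the skeleta involved lie below the completeness range (cf. the converse in \theoref{BH} and \propref{old}) this obstruction is complete, so $\Sigma^{n+r+1}h_2(\alpha)\ne 0$ forces $p_3^{M\times S^n}$ to have no section and $\cat(M\times S^n)\ge 4$. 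Combined with the general bound $\cat(M\times S^n)\le\cat M+1\le 4$ this gives $\cat(M\times S^n)=4$.
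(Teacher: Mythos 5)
The paper states this theorem as a citation to Iwase~\cite{Iw2} and supplies no proof of its own, so there is no in-text argument to compare against; the review below therefore assesses your reconstruction on its own terms. Your framework --- build a section of a Ganea fibration over a skeleton, read the obstruction to extending it over the top cell as a (suspended) Berstein--Hilton invariant via \propref{extract} and \remref{rema}, and use \propref{old} to place the obstruction in a range where it is complete --- is the right mechanism and is the same circle of ideas the paper itself exploits later (cf.\ the identity $H_2^\sigma(\psi)\simeq\beta\circ\Sigma^r H_1(\alpha)$ invoked in the proof of \propref{IwQ3} and the diagrams in Lemmas~\ref{M} and~\ref{Q}).

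The proposal nonetheless has a genuine gap, which you partly concede by calling it the ``crux.'' The identification of $H_2^\sigma(\psi)$ --- and of the corresponding $H_3$-obstruction over the top cell of $M\times S^n$ --- with a natural, range-injective image of $\Sigma^r H_1(\alpha)$ (resp.\ $\Sigma^{n+r}H_1(\alpha)$, $\Sigma^{n+r+1}h_2(\alpha)$) is the entire computational content of the theorem: you assert it and explain why it is plausible, but do not prove it. It requires an explicit chase through the clutching construction, the product cell structure, and the Ganea tower, and cannot be left as a sentence. There are also smaller slips. The deduction $\cat X_1>1$ uses the converse direction of \theoref{BH} (applicable since $\dim S^r=r\le 1+t-2$ as $r<t$), not its ``first part.'' The injectivity you need should come from a comparison such as $F_2S^r\to F_2X_1$ being a $(2r+t-1)$-equivalence (the inclusion $S^r\hookrightarrow X_1$ is a $t$-equivalence of $(r-1)$-connected spaces), not from the $(3r+t-1)$-equivalence $F_2X_1\to F_2M$ you cite, which serves a different purpose. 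Finally, in the third bullet the claim that the $h_2(\alpha)$-component of the obstruction is ``independent of the choice of section'' is exactly the kind of statement that must be argued --- $H_k$ is a set precisely because sections vary --- and the completeness you appeal to only says that \emph{some} section has vanishing invariant when $\cat$ is small, not that a particular component is a well-defined invariant of the coset.
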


\subsection{Homotopy groups of spheres} We follow the notations from Toda's book~\cite{T}. 
The Hopf bundles $\eta:S^3\to S^2$, $\nu:S^7\to S^4$, and $\sigma:S^{15}\to S^8$ produce by suspensions the elements $\eta_n\in\pi_{n+1}(S^n)$, $\nu_n\in\pi_{n+3}(S^n)$, and
$\sigma_n\in\pi_{n+7}(S^n)$. We use the notation $\eta_n^2$ for the composition $\eta_n\circ\eta_{n+1}:S^{n+2}\to S^n$ as well as for the generator of $\pi_{n+2}(S^n)=\mathbb Z_2$.
The generator $\epsilon_3$ of the $\mathbb Z_2$ summand of $\pi_{11}(S^3)$ and its suspensions produce generators $\epsilon_n\in\pi_{n+8}(S^n)$. 
\begin{prop}\label{toda}
Let $\phi=\eta_3^2\circ\epsilon_5\in\pi_{13}(S^2)$. Then $\Sigma^5\phi\ne 0$ and $\Sigma^6\phi=0$.
\end{prop}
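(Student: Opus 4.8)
The plan is to treat the Proposition as a calculation in the $2$-primary homotopy groups of spheres, in the spirit of Toda's book (the composite $\eta_3^2\circ\epsilon_5$ naturally lives in $\pi_{13}(S^3)$, which for these purposes we identify with $\pi_{13}(S^2)$). Since $\phi$ is annihilated by $2$ (because $\epsilon_5$ is), so is every suspension $\Sigma^k\phi$, and we may work $2$-locally throughout. Because suspension is compatible with composition, $\Sigma^k\phi=\eta_{3+k}^2\circ\epsilon_{5+k}\in\pi_{13+k}(S^{3+k})$, and all of these groups sit in the $10$-stem; stably that stem is $\pi_{10}^s\cong\mathbb Z/6$, with $2$-primary part $\mathbb Z/2$. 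So the whole question is to determine the largest $k$ for which the metastable class $\eta_n^2\circ\epsilon_{n+2}$ is nonzero.

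The first ingredient is the \emph{stable} relation $\eta^2\epsilon=0\in\pi_{10}^s$. From $\eta\sigma=\bar\nu+\epsilon$ in $\pi_8^s$ one obtains $\eta^3\sigma=\eta^2\bar\nu+\eta^2\epsilon$; since $\eta^3$ is $2$-torsion in $\pi_3^s\cong\mathbb Z/24$ it equals $12\nu$, so $\eta^3\sigma=12\nu\sigma=0$ because $\pi_{10}^s$ has exponent $6$. Moreover $\bar\nu=\langle\nu,\eta,\nu\rangle$ and $\langle\eta,\nu,\eta\rangle=\nu^2$ (both brackets having zero indeterminacy), so the shuffle identity gives $\eta\bar\nu=\nu^3$ and hence $\eta^2\bar\nu=\eta\nu^3=0$ as $\eta\nu=0$. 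Therefore $\eta^2\epsilon=0$. This already yields $\Sigma^k\phi=0$ once $3+k\ge12$, so the real content of the Proposition is the far sharper assertion that $\eta_n^2\circ\epsilon_{n+2}$ is already zero at $n=9$ but not at $n=8$.

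To descend into the metastable range I would run the $2$-local EHP sequence, using $\ker\bigl(\Sigma\colon\pi_j(S^n)\to\pi_{j+1}(S^{n+1})\bigr)=\operatorname{im}\bigl(P\colon\pi_{j+2}(S^{2n+1})\to\pi_j(S^n)\bigr)$, where on these (stable) source groups $P$ is desuspension followed by composition with the Whitehead square $[\iota_n,\iota_n]$. Starting from $\eta_n^2\circ\epsilon_{n+2}=0$ for $n\ge12$ and working down, at each stage $\eta_n^2\circ\epsilon_{n+2}$ lies in this $P$-image; consulting Toda's tables one checks that it is the \emph{zero} element of that image for $n=9,10,11$, which gives $\Sigma^6\phi=\eta_9^2\circ\epsilon_{11}=0$. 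At $n=8$, however, the relevant image is the cyclic group $[\iota_8,\iota_8]\circ\pi_{18}(S^{15})$ with $\pi_{18}(S^{15})\cong\pi_3^s\cong\mathbb Z/24$ generated by $\nu_{15}$, and here $\eta_8^2\circ\epsilon_{10}$ must be identified with a \emph{nonzero} element $[\iota_8,\iota_8]\circ(k\nu_{15})$. This is where the phenomenon originates: because $8$ is one of the Hopf-invariant-one dimensions, the Whitehead square $[\iota_8,\iota_8]$ carries composition data with $\nu_{15}$ that is killed by one further suspension. Equivalently, one may simply read off from Toda the groups $\pi_{18}(S^8)$ and $\pi_{19}(S^9)$ and the pertinent composition products.

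I expect the main obstacle to be exactly this last step: proving that $\eta_8^2\circ\epsilon_{10}$ does \emph{not} vanish in $\pi_{18}(S^8)$ and coincides there with the Whitehead-square contribution. The rest is either formal (compatibility of suspension with composition, the $2$-torsion bookkeeping) or a short Toda-bracket manipulation; the genuinely delicate input is this unstable phenomenon in dimension $8$, so the proof must lean on Toda's detailed computations in dimensions around $15$ through $18$.
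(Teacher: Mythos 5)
Your proposal is a plan rather than a proof, and you say so yourself: the \emph{entire} content of the Proposition is the unstable behavior at $S^8$ versus $S^9$, and that is precisely the step you declare to be the expected obstacle and leave unresolved. The stable calculation $\eta^2\epsilon=0\in\pi_{10}^s$ via $\eta\sigma=\bar\nu+\epsilon$, $\eta\bar\nu=\nu^3$, and $\eta^3\sigma=0$ is fine (though it only bounds the range where $\Sigma^k\phi$ could survive), and your instinct that the Hopf-invariant-one phenomenon on $S^8$ is responsible for the sudden death of $\eta_n^2\circ\epsilon_{n+2}$ at $n=9$ is the right intuition. But your EHP descent, as written, begs the question: knowing $\eta_{n+1}^2\circ\epsilon_{n+3}=0$ only places $\eta_n^2\circ\epsilon_{n+2}$ in $\operatorname{im}P$, and ``consulting Toda's tables one checks it is the zero element'' for $n=9,10,11$ is not an argument, while ``$\eta_8^2\circ\epsilon_{10}$ must be identified with a nonzero $[\iota_8,\iota_8]\circ(k\nu)$'' asserts the very nonvanishing you need. (Also a minor index slip: the $P$-source in the sequence $\pi_{18}(S^8)\to\pi_{19}(S^9)$ is $\pi_{20}(S^{17})$, not $\pi_{18}(S^{15})$, though both happen to equal $\pi_3^s$.)

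The paper's route is substantially more direct and bypasses EHP and Whitehead-square bookkeeping altogether. It uses Toda's relation (7.10), $\eta_m^2\circ\epsilon_{m+2}=4(\nu_m\circ\sigma_{m+3})$, valid in the relevant range. This converts the problem into knowing the order of $\nu_m\circ\sigma_{m+3}$ in $\pi_{m+10}(S^m)$: by Toda's Theorem~7.3(2), $\nu_m\circ\sigma_{m+3}$ generates a $\mathbb Z_8$ subgroup for $m=5,\dots,8$, so $4(\nu_m\sigma_{m+3})\ne 0$ there, giving $\Sigma^5\phi=\eta_8^2\circ\epsilon_{10}\ne 0$; and Toda's (7.20) gives $4(\nu_9\circ\sigma_{12})=0$, hence $\Sigma^6\phi=0$. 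The drop in order of $\nu\sigma$ from $8$ to something killing the factor $4$ exactly at $S^9$ is what you were trying to extract from the EHP sequence, but here it is read directly off a stated composition relation. If you want to salvage your approach, you would in effect have to re-derive the content of (7.10) and 7.3(2); you would be better served quoting them as the paper does.
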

\begin{proof} For $n\ge 2$, $\Sigma^n\phi=\eta^2_{n+3}\circ\epsilon_{n+5}=4(\nu_{n+3}\circ\sigma_{n+6})$ by (7.10) of~\cite{T}.
By  Theorem 7.3 (2)~\cite{T}, $\nu_{n+3}\circ\sigma_{n+6}$ generates the subgroup $\mathbb Z_8\subset\pi_{n+13}(S^{n+3})$ for $n=2,3,4,5$. Hence, $\Sigma^5\phi\ne 0$.
By  (7.20) in~\cite{T} $\Sigma^6\phi=4(\nu_9\circ\sigma_{12})=0$.
\end{proof}

We recall some facts about primary $p$-components  $\pi_i(S^n;p)$ of  homotopy groups $\pi_i(S^n)$ for odd prime $p$. Namely,
for $i\in\{1,2,\dots,p-1\}$,  $m\ge 1$,
$$\pi_{2m+1+2i(p-1)-1}(S^{2m+1};p)=\mathbb Z_p$$
with the generators  $\alpha_i(2m+1)$ satisfying the condition $\Sigma^2\alpha_i(2m-1)=\alpha_i(2m+1)$.  Using suspension we define $\alpha_i(n)$ for even $n$ as well.
Then the group $\pi_{2i(p-1)+1}(S^3;p)\cong\mathbb Z_p$ (see ~\cite{T} Proposition 13.6.)
for $2\le i\le p$  is generated by $\alpha_1(p)\circ\alpha_{i-1}(2p)$. 
There is Serre isomorphism $$\pi_i(S^{2m};p)\cong \pi_{i-1}(S^{2m-1};p)\oplus\pi_i(S^{4m-1};p)$$
such that the suspension $\Sigma:\pi_{i-1}(S^{2m-1};p)\to\pi_i(S^{2m};p)$  defines the embedding of the first summand.

\begin{prop}\label{toda-p}
Let $\psi=\alpha_1(3)\circ\alpha_2(6)$. Then $\Sigma^3\psi\ne 0$ and $\Sigma^4\psi=0$.
\end{prop}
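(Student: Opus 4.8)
The plan is to mirror the proof of Proposition~\ref{toda} but work $p$-locally with $p=3$, using the Toda-style formulas for the $\alpha$-family. First I would recall that $\alpha_1(n)\in\pi_{n+3}(S^n;3)=\mathbb Z_3$ is the generator of the first $3$-primary stem and $\alpha_2(n)\in\pi_{n+7}(S^n;3)$ generates the relevant $\mathbb Z_3$; their composition $\psi=\alpha_1(3)\circ\alpha_2(6)$ lives in $\pi_{13}(S^3;3)$. By the statement quoted just before Proposition~\ref{toda-p}, the group $\pi_{2i(p-1)+1}(S^3;p)\cong\mathbb Z_p$ for $2\le i\le p$ is generated by $\alpha_1(p)\circ\alpha_{i-1}(2p)$; with $p=3$ and $i=3$ this gives $\pi_{13}(S^3;3)=\mathbb Z_3$ generated by $\alpha_1(3)\circ\alpha_2(6)=\psi$, so in particular $\psi\ne 0$, and a fortiori the lower suspensions $\Sigma\psi,\Sigma^2\psi,\Sigma^3\psi$ will need a separate argument to be shown nonzero, or rather we only need $\Sigma^3\psi\ne0$.

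For the nonvanishing $\Sigma^3\psi\ne 0$: after three suspensions $\psi$ becomes $\alpha_1(6)\circ\alpha_2(9)\in\pi_{16}(S^6;3)$. I would invoke the Serre isomorphism $\pi_i(S^{2m};p)\cong\pi_{i-1}(S^{2m-1};p)\oplus\pi_i(S^{4m-1};p)$ with $2m=6$, under which the suspension embeds $\pi_{15}(S^5;3)$ as the first summand of $\pi_{16}(S^6;3)$. Since $\alpha_1(5)\circ\alpha_2(8)\ne0$ in $\pi_{15}(S^5;3)$ — this is in the stable range for the first few $\alpha$'s, so it equals the stable composition $\alpha_1\alpha_2$, which is known to be nonzero (it generates a $\mathbb Z_3$; cf. Toda's tables) — its image $\Sigma^3\psi$ under this split injection is nonzero. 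Alternatively, and perhaps more cleanly, I would cite the explicit value of $\pi_{16}(S^6;3)$ and the behavior of suspension from Toda's book directly.

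For the vanishing $\Sigma^4\psi=0$: here $\Sigma^4\psi=\alpha_1(7)\circ\alpha_2(10)\in\pi_{17}(S^7;3)$, which is in the stable range, so it equals the stable composite $\alpha_1\circ\alpha_2$ in the $3$-primary stable stems. The key fact is that $\alpha_1\alpha_2=0$ stably at $p=3$; equivalently $\alpha_2$ is already divisible appropriately, or one uses the relation coming from the structure of the stable homotopy of spheres at $p=3$ where the composition product $\alpha_1\cdot\alpha_2$ vanishes (this is analogous to how $\eta^2\epsilon$ becomes $4\nu\sigma$ and then dies after enough suspensions in the $2$-primary case). Concretely I would quote the appropriate line of Toda's computations — the $3$-primary analogue of (7.20) — asserting $\alpha_1(n)\circ\alpha_2(n+4)=0$ for $n\ge 7$.

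The main obstacle is pinning down the exact suspension at which $\alpha_1\circ\alpha_2$ dies and citing the precise relation from Toda (or from the standard $3$-primary stable stem computations) rather than hand-waving; the borderline case is whether the composite survives to $\pi_{16}(S^6;3)$ but not to $\pi_{17}(S^7;3)$, which is exactly the numerology we need and must be checked against the tables. Everything else — identifying $\psi$ with a named generator, applying the Serre splitting, and reducing to the stable range — is routine once that one relation is located. This proposition is the $3$-primary counterpart of Proposition~\ref{toda} and will be used exactly as that one is, to feed into Theorem~\ref{cat} for the manifold $M_2$.
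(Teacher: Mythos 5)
Your strategy for the vanishing is essentially the right kind of fact (the composite dies at a precise suspension stage), but your argument for the nonvanishing contains a genuine error that in fact contradicts your argument for the vanishing, so the proposal as written does not work.

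The problem is the appeal to the stable range. You assert that $\alpha_1(5)\circ\alpha_2(8)\in\pi_{15}(S^5;3)$ ``is in the stable range'' and hence equals the stable composite $\alpha_1\alpha_2$, ``which is known to be nonzero.'' But then two lines later you correctly state that $\alpha_1\alpha_2=0$ in the $3$-primary stable stem, and use that to kill $\Sigma^4\psi$. These two claims cannot both hold. In fact the stable composite \emph{is} zero, and your nonvanishing argument collapses. Moreover none of these groups is actually in the stable range: the relevant stem is $k=10$, so Freudenthal gives stability only from $S^{12}$; the groups $\pi_{15}(S^5;3)$, $\pi_{16}(S^6;3)$, $\pi_{17}(S^7;3)$ are all unstable, and indeed $\pi_{15}(S^5;3)\cong\mathbb Z_9$, not $\mathbb Z_3$ like the stable group. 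The whole content of the proposition is an unstable phenomenon, namely that $\Sigma^2\psi$ survives as a nonzero element of order $3$ in $\mathbb Z_9=\pi_{15}(S^5;3)$ (hence in $\pi_{16}(S^6;3)$ via the Serre splitting, which you do invoke correctly), but lies exactly in the kernel of the next double suspension $\pi_{15}(S^5;3)\twoheadrightarrow\pi_{17}(S^7;3)\cong\mathbb Z_3$. That kernel/image analysis is carried out in the paper using Toda's exact sequence (13.2) together with the computed orders of these groups; without some unstable input of this kind you cannot establish $\Sigma^3\psi\ne0$. So the missing idea is precisely the unstable exact-sequence argument, and the stable-range shortcut you propose is not available here.

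One small side remark: the proposition feeds into $M_3$, not $M_2$ (that is Proposition~\ref{toda}).
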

\begin{proof}
By $(13.6)'$  in~\cite{T} the group $\pi_{13}(S^3;3)=\mathbb Z_3$ is generated by $\psi$.
By Theorem 13.9~\cite{T}, $\pi_{14}(S^3;3)=\mathbb Z_3$, $\pi_{16}(S^5;3)=\mathbb Z_9$, and $\pi_{15}(S^5;3)=\mathbb Z_9$.
The exact sequence (13.2) from~\cite{T} in view of (13.6) produces the exact sequence
$$
0\to\pi_{14}(S^3;3)\stackrel{\Sigma^2}\to\pi_{16}(S^5;3)\to\mathbb Z_3\to\pi_{13}(S^3;3) \stackrel{\Sigma^2}\to\pi_{15}(S^5;3)\to\mathbb Z_3
$$
which implies that $\pi_{13}(S^3;3) \stackrel{\Sigma^2}\to\pi_{15}(S^5;3)$ is injective.
Therefore, $\Sigma^2\psi$ generates a subgroup $\mathbb Z_3\subset\mathbb Z_9=\pi_{15}(S^5;3)$. In particular,   $\Sigma^2\psi\ne 0$.
By the Serre isomorphism, the suspension homomorphism $$\pi_{15}(S^5;3)\to \pi_{16}(S^6;3)$$  is a monomorphism. Hence, $\Sigma^3\psi\ne 0$.

The exact sequence (13.2) from~\cite{T} implies that the following sequence
$$
\mathbb Z_3\to\pi_{15}(S^5;3)\stackrel{\Sigma^2}\to\pi_{17}(S^7;3)\to 0
$$
is exact. Since   $\pi_{17}(S^7;3)=\mathbb Z_3$ (Theorem 3.19~\cite{T}), this implies that $\Sigma^4\psi=0$.
\end{proof}

\section{Iwase's Examples}

\subsection{Manifold $M_2$} The $S^1$-action on $S^7$ defines a factorization of the Hopf bundle  $\nu_4:S^7\to S^4$  through the $S^2$-bundle $h:\mathbb C P^3\to S^4$.
Iwase defined $M_2$ as the total space of the $S^2$-bundle $q_2:M_2\to S^{14}$ induced from $h$ by means of the suspension map $f_2=\Sigma f_2'$ where $f_2'$ represents $\eta^2_3\circ\epsilon_5\in\pi_{13}(S^3)$. Then the gluing map $\Psi:S^{13}\times S^2\to S^2$ for $M_2$ is the composition $\Psi_0\circ f_2'$ where $\Psi_0:S^3\times S^2\to S^2$ is the gluing map for $h$. Then the attaching map for the 14-cell in $M_2$ is the composition $\alpha=\alpha_0\circ f'_2$ where $\alpha_0$ is the attaching map of the 4-cell in $\mathbb C P^2$. Thus, $\alpha$ represents $\eta\circ\eta^2_3\circ\epsilon_5$:
$$
\begin{CD}
S^{13} @>\epsilon_5>>S^5 @>\eta_4>> S^4 @>\eta_3>> S^3 @>\eta>> S^2.
\end{CD}
$$
The following is a minor refinement of Iwase's theorem~\cite{Iw2}.
\begin{prop}\label{M2}
The manifold $M_2$ has the following properties:
\begin{enumerate}
\item $\cat (M_2\times S^n)=3$ for $n\ge 4$;

\item $\cat M_2=3$;

\item $\cat (M_2\times S^1)=\cat(M_2\times S^2)=4$;

\item There is a map $f:S^{14}\times S^2\to M_2$ of degree 2.
\end{enumerate}
\end{prop}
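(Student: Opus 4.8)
The plan is to establish each of the four properties by combining Iwase's category criterion (\theoref{cat}) with the suspension computations for $\phi=\eta_3^2\circ\epsilon_5$ recorded in \propref{toda}. First I would recall the cell structure from \S2.4: since $M_2$ is the $S^2$-bundle $q_2:M_2\to S^{14}$ pulled back from $h:\mathbb{C}P^3\to S^4$ along $f_2=\Sigma f_2'$, we have $M_2=S^2\cup_\alpha e^{14}\cup_\psi e^{16}$ with $\alpha=\alpha_0\circ f_2'$ representing $\eta\circ\eta_3^2\circ\epsilon_5\in\pi_{13}(S^2)$. The key input is the Berstein--Hilton invariant: $H_1(\alpha)=h_2(\alpha)$ is the second James--Hopf invariant of $\alpha$, and since $h_2(\eta\circ\beta)=\Sigma\beta$ up to a unit for $\beta\in\pi_{13}(S^3)$ (because $h_2(\eta)$ generates $\pi_3(S^3)$, as noted after \defref{toda} via $H_1(\eta)=1$), we get $H_1(\alpha)=\Sigma(\eta_3^2\circ\epsilon_5)=\phi\in\pi_{13}(S^2)$, which is nonzero by \propref{toda}. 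So \theoref{cat} applies with $r=2$, $t=13$.

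Next I would read off the three items from \theoref{cat}. For (2), $\cat M_2=3$ iff $\Sigma^2 H_1(\alpha)=\Sigma^2\phi\ne 0$; by \propref{toda} $\Sigma^5\phi\ne 0$ hence certainly $\Sigma^2\phi\ne 0$, so $\cat M_2=3$. For (1), we need $\Sigma^{n+2}H_1(\alpha)=\Sigma^{n+2}\phi=0$ for $n\ge 4$; since $\Sigma^6\phi=0$ by \propref{toda}, we get $\Sigma^{n+2}\phi=0$ for all $n\ge 4$, so $\cat(M_2\times S^n)=3$. For the case $n=2$ in (3): $\Sigma^{n+r+1}h_2(\alpha)=\Sigma^{5}\phi\ne 0$ (again noting $h_2(\alpha)=H_1(\alpha)=\phi$ since $H_1$ of a map into $S^2$ equals the family of James--Hopf invariants and here the only relevant one is $h_2$), so the third bullet of \theoref{cat} gives $\cat(M_2\times S^2)\ge 4$; combined with the general upper bound $\cat(M_2\times S^2)\le\cat M_2+\cat S^2=4$ we conclude $\cat(M_2\times S^2)=4$. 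For $n=1$ in (3), \theoref{cat} as stated is about $S^n$ with $n\ge 1$; $\Sigma^{r+2}h_2(\alpha)=\Sigma^{4}\phi$, and $\Sigma^4\phi\ne 0$ since $\Sigma^5\phi\ne 0$, so the same argument gives $\cat(M_2\times S^1)=4$.

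For (4), the degree-$2$ map $f:S^{14}\times S^2\to M_2$ is constructed from the bundle structure directly. The clutching description $M_2=S^2\cup_\Psi(D^{14}\times S^2)$ exhibits the pullback map $\bar g:D^{14}\times S^2\to M_2$; collapsing the boundary $\partial D^{14}\times S^2$ composed appropriately and using that the fiber inclusion $S^2\hookrightarrow M_2$ together with a section-like map over the $14$-cell assembles a map from $S^{14}\times S^2$. Concretely, I would take the composite $S^{14}\times S^2\to (D^{14}/\partial D^{14})\times S^2\xrightarrow{} M_2$ built so that on the top cell $e^{14}\times e^2$ it has degree equal to the degree of the clutching data, which for the $\mathbb{C}P^3$-bundle $h$ is $1$, but the suspension $f_2=\Sigma f_2'$ contributes; the factor of $2$ arises because $\eta:S^3\to S^2$ has Hopf invariant one, so collapsing $\mathbb{C}P^2\subset\mathbb{C}P^3$ appropriately doubles the fundamental class — more precisely the map $S^2\times S^2\to\mathbb{C}P^2$ of degree $2$ (sum of the two projections followed by inclusion of a linear $\mathbb{C}P^1$, using the $\eta$-attaching structure) pulls back over $S^{14}$. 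I expect the main obstacle to be pinning down this degree-$2$ map cleanly: one must check that the map on $H^{16}$ is multiplication by $2$, which reduces to computing the degree of the induced self-map of $S^2\wedge S^{14}=S^{16}$ coming from the collapse $\mathbb{C}P^2\to S^4$ relative to the two cells of $\mathbb{C}P^2$; this is where the Hopf-invariant-one phenomenon enters and where a careful diagram chase with the cofiber sequence $S^2\to\mathbb{C}P^2\to S^4$ and its pullback along $f_2$ is required.
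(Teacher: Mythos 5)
Your treatment of items (1)--(3) is essentially the paper's argument: identify $H_1(\alpha)=h_2(\alpha)$, apply Theorem~\ref{cat} with $r=2$, and read off the suspension vanishing/nonvanishing from Proposition~\ref{toda}. However, the formula you invoke, ``$h_2(\eta\circ\beta)=\Sigma\beta$,'' is wrong and does not even typecheck: if $\beta\in\pi_{13}(S^3)$ then $h_2(\eta\circ\beta)\in\pi_{13}(S^3)$ while $\Sigma\beta\in\pi_{14}(S^4)$. The correct statement (what the paper's diagram gives) is $h_2(\eta\circ\gamma)=h_2(\eta)\circ\gamma=\gamma$ when $\gamma$ is a suspension, so $H_1(\alpha)=\eta_3^2\circ\epsilon_5=\phi\in\pi_{13}(S^3)$, with no extra suspension. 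If you actually carried your stated formula through, you would get $H_1(\alpha)=\Sigma\phi$, and then $\Sigma^{n+r+1}h_2(\alpha)=\Sigma^{n+4}\phi$ would already vanish at $n=2$, destroying your proof of $\cat(M_2\times S^2)=4$. You silently switch back to the correct $H_1(\alpha)=\phi$ when you evaluate the suspensions, so the conclusions you reach are right, but the intermediate formula must be fixed.

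For item (4) your approach is genuinely different from the paper's, and as sketched it does not work. You try to build the degree-$2$ map directly from the clutching picture, attributing the factor $2$ to ``Hopf invariant one'' and a degree-$2$ map $S^2\times S^2\to\mathbb{C}P^2$. That mechanism is not what produces the $2$ here, and you admit you cannot pin down the degree. The relevant fact is entirely different: one pulls back the $S^2$-bundle $q_2:M_2\to S^{14}$ along a degree-$2$ self-map $2:S^{14}\to S^{14}$, obtaining a degree-$2$ bundle map $M'\to M_2$. Then $M'$ is induced from $h:\mathbb{C}P^3\to S^4$ by $f_2\circ 2=\Sigma(\eta_3^2\circ\epsilon_5)\circ 2=\eta_4^2\circ\epsilon_6\circ 2=(2\eta_4^2)\circ\epsilon_6=0$, since $2\eta_n=0$ for $n>2$. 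Hence the pulled-back bundle is trivial, $M'\cong S^{14}\times S^2$, and the bundle map $M'\to M_2$ is the desired degree-$2$ map. The $2$ comes from the order of $\eta$ in the stable range, not from Hopf invariant one; your route via collapsing $\mathbb{C}P^2\subset\mathbb{C}P^3$ has no way to produce the required map on the top cell.
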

\begin{proof}
To prove (1)-(3) we show that $H_1(\alpha)=h_2(\alpha)=\phi$. Then by Proposition~\ref{toda}, $\Sigma^2H_1(\alpha)\ne 0$, $\Sigma^6H_1(\alpha)=0$, and $\Sigma^5h_2(\alpha)\ne 0$. Theorem~\ref{cat} implies that $\cat M_2=3$, $\cat(M_2\times S^n)=3$ for $n\ge 4$ and $\cat(M_2\times S^1)=\cat(M\times S^2)=4$.

Let $i_X\to \Omega\Sigma X$ denote the natural inclusion.
If $\beta=\eta\circ \gamma$ were  $\gamma=\Sigma\gamma'$ is a suspension, $\gamma':S^{k}\to S^r$, then the commutativity of diagram

\[
\begin{tikzcd}
\Sigma\Omega\Sigma S^k \arrow[r, "\bar\gamma_1"]  \arrow[d]
& \Sigma\Omega\Sigma S^r\arrow[r, "\bar\eta_1"]  \arrow[d]  &\Sigma\Omega S^2 \arrow[d, "p_1^{S^2}"]\\
\Sigma S^k \arrow[r,"\gamma"]  \arrow[u,   "\Sigma  i_{S^k}", dashed, shift left=1.5ex]
&   \Sigma S^r  \arrow[u,  "\Sigma i_{S^r}", dashed, shift left=1.5ex]\arrow[r, "\eta"] & S^2\arrow[u,  dashed, shift left=1.5ex]
\end{tikzcd}
\]
implies that $H_1(\beta)=H_1(\eta)\circ\gamma$ and $h_j(\beta)=h_j(\eta)\circ\gamma$. If
 $\eta:S^3\to S^2$, then $h_j(\eta)=0$ for $j\ge 3$, $h_2(\eta)=H_1(\eta)$, and $h_2(\beta)=H_1(\beta)$.

Note that $\eta_3^2\circ\epsilon_5$ is a suspension since by definition $\eta_n^2$ and $\epsilon _m$ are suspensions for $n>2$ and $m>3$,
and for the Hopf map, $H_1(\eta)=1$.
Then, $h_2(\alpha)=H_1(\alpha)=H_1(\eta)\circ (\eta_3^2\circ\epsilon_5)=\eta_3^2\circ\epsilon_5=\phi\in\pi_{13}(S^3)$ for the map $\phi$ defined in Proposition~\ref{toda}.

Proof of (4). Let $2:S^{14}\to S^{14}$ be a map of degree 2. It induces the map of the pull-back manifold $f:M'\to M_2$ of degree 2. Note that $M'$ is the pull-back of $\mathbb C P^3$
with respect to the map $f_2\circ q$ which defines zero element of $\pi_{14}(S^4)$, since $\Sigma(\eta_3^2\circ\epsilon_5)\circ 2=\eta_4^2\circ\epsilon_6\circ 2=(2\eta_4^2)\circ\epsilon_6=0$ in view of the equality $2\eta_n=0$ for $n>2$. Therefore, $M'$ is homeomorphic to $S^{14}\times S^2$.
\end{proof}

\

\subsection{Manifold $M_3$} The manifold $M_3$ is defined as the total space of the $S^2$-bundle $q_3:M_3\to S^{14}$ induced from $h:\mathbb C P^3\to S^4$ 
by means of the suspension map $f_3=\Sigma f_3'$ where $f_3':S^{13}\to S^3$ is a map representing $\alpha_1(3)\circ\alpha_2(6)$.
 Then as in the construction of $M_2$ the attaching map $\alpha=\alpha_0\circ f'_3$ where $\alpha_0$ is the attaching map of the 4-cell in $\mathbb C P^2$. Thus, $\alpha$ represents $\eta_2\circ\alpha_1(3)\circ\alpha_2(6)$:
$$
\begin{CD}
S^{13} @>\Sigma\alpha_2(5)>>S^6  @>\alpha_1(3)>> S^3 @>\eta_2>> S^2.
\end{CD}
$$
\begin{prop}\label{M3}
The manifold $M_3$ has the following properties:

(1) $\cat (M_3\times S^n)=3$ for $n\ge 2$;

(2) $\cat M_3=3$;

(3) There is a map $f:S^{14}\times S^2\to M_3$ of degree 3.
\end{prop}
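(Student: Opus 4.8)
\textbf{Proof proposal for Proposition~\ref{M3}.}

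The plan is to run the same machinery used for $M_2$ in Proposition~\ref{M2}, substituting the $3$-primary map $f_3'$ for $f_2'$ and invoking Proposition~\ref{toda-p} in place of Proposition~\ref{toda}. The first step is to compute the Berstein--Hilton invariant $H_1(\alpha)$ of the attaching map $\alpha = \alpha_0\circ f_3'$ of the $14$-cell in $M_3$, which represents $\eta_2\circ\alpha_1(3)\circ\alpha_2(6)$. As in the proof of Proposition~\ref{M2}, I would factor $\alpha$ as $\eta\circ\gamma$ with $\gamma = \alpha_1(3)\circ\alpha_2(6) = \psi$ in the notation of Proposition~\ref{toda-p}, and check that $\psi$ is a suspension: the elements $\alpha_i(n)$ are defined via iterated double suspension from $\alpha_i(2m+1)$, so for the relevant range $\psi = \alpha_1(3)\circ\alpha_2(6)$ is a suspension of a map from $S^{13}$ (one should note $13 \geq 4$ so we are safely in the stable-enough range where $\Sigma i_{S^3}$ splits $p_1^{S^3}$ off). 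Then the same commutative square of fiberwise joins that appears in the proof of Proposition~\ref{M2} gives $H_1(\alpha) = H_1(\eta)\circ\psi = \psi \in \pi_{13}(S^2)$, using $H_1(\eta)=1$ for the Hopf map.

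The second step is to feed this into Theorem~\ref{cat}. Here $M_3 = S^2 \cup_\alpha e^{14}\cup_\psi e^{16}$ is the total space of an $S^2$-bundle over $S^{14}$, so in the notation of Theorem~\ref{cat} we have $r=2$ and $t=13$, hence $t>r>1$ as required, and $H_1(\alpha)=\psi\ne 0$ by Proposition~\ref{toda-p}. For (2), $\cat M_3 = 3$ iff $\Sigma^r H_1(\alpha) = \Sigma^2\psi \ne 0$, which holds by Proposition~\ref{toda-p}; together with the obvious bound $\cat M_3 \leq 3$ (from $M_3$ having cone-length $3$, or equivalently being built from $S^2$ by attaching two cells) this gives $\cat M_3 = 3$. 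For (1), the criterion $\Sigma^{n+r}H_1(\alpha) = \Sigma^{n+2}\psi = 0$ implies $\cat(M_3\times S^n) = 3$; by Proposition~\ref{toda-p} we have $\Sigma^4\psi = 0$, so $\Sigma^{n+2}\psi = 0$ for all $n \geq 2$, giving $\cat(M_3\times S^n)=3$ for $n\geq 2$. (The lower bound $\cat(M_3\times S^n)\geq \cat M_3 = 3$ follows from the standard fact that $\cat$ does not decrease under the projection $M_3\times S^n\to M_3$ having a section, or from cup-length; combined with the upper bound this pins it at $3$.)

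The third step is part (3), constructed exactly as part (4) of Proposition~\ref{M2}: let $3\colon S^{14}\to S^{14}$ be a degree-$3$ map and form the pull-back bundle $q'\colon M'\to S^{14}$ of $q_3$ along it, together with the induced degree-$3$ map $f\colon M'\to M_3$. By the functoriality of the pull-back construction recorded in Section~2.4, $M'$ is the pull-back of $h\colon \mathbb{C}P^3\to S^4$ along $f_3\circ 3 = \Sigma(\alpha_1(3)\circ\alpha_2(6))\circ 3$. Since $\alpha_1(3)$ has order $3$ (it generates a $\mathbb{Z}_3$), this composite is $\Sigma\bigl((3\alpha_1(3))\circ\alpha_2(6)\bigr)$... more carefully, $\Sigma\psi\circ 3 = 3(\Sigma\psi)$, and $\Sigma\psi = \alpha_1(4)\circ\alpha_2(7) \in \pi_{14}(S^4;3)$ which is $3$-torsion, so $3(\Sigma\psi) = 0$ in $\pi_{14}(S^4)$ (the $3$-component kills it, and there is no other torsion in the relevant summand since the bundle classifying map factors through the $3$-local map $f_3$). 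Hence $M'$ is the pull-back along a nullhomotopic map, so $M'\cong S^{14}\times S^2$, and $f\colon S^{14}\times S^2\to M_3$ has degree $3$.

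The main obstacle I anticipate is purely bookkeeping rather than conceptual: verifying that $\psi = \alpha_1(3)\circ\alpha_2(6)$ genuinely lies in the image of the suspension $\Sigma\colon \pi_{12}(S^2)\to\pi_{13}(S^3)$ in the precise sense needed to apply the fiberwise-join diagram (so that $h_j$ and $H_1$ are computed by precomposition), and checking the dimension inequality $13 \geq 4$ that lets $\Sigma i_{S^3}$ split the first Ganea fibration of $S^3$ on $\pi_{13}$; both are routine given that the $\alpha_i(n)$ are defined by iterated suspension, but they need to be stated. The other delicate point is ensuring in part (3) that no $2$- or other-primary torsion obstructs the vanishing $3(\Sigma\psi)=0$ in $\pi_{14}(S^4)$; this is immediate because $f_3$ was chosen to represent a $3$-primary class, so its suspension already lands in the $3$-component, and multiplication by $3$ annihilates $\mathbb{Z}_3$.
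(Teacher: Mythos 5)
Your outline follows the paper's general approach, but there is a genuine gap in the first step, and the paper itself explicitly calls it out. You claim that $\psi = \alpha_1(3)\circ\alpha_2(6)$ is a suspension (so that the commutative diagram from the $M_2$ proof can be reused verbatim to get $H_1(\alpha) = H_1(\eta)\circ\psi$), on the grounds that ``the elements $\alpha_i(n)$ are defined via iterated double suspension.'' This is backwards: $\alpha_1(3)\colon S^6\to S^3$ is the \emph{initial} term of the suspension chain, not itself a suspension, and indeed it cannot be one because $\pi_5(S^2)\cong\mathbb{Z}_2$ has trivial $3$-primary component. The same applies to the composite: $\pi_{12}(S^2;3)=0$, so no element of $\pi_{13}(S^3;3)\cong\mathbb{Z}_3$ is in the image of suspension. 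The paper handles this by observing that $\alpha_1(3)$ is nonetheless a \emph{co-H map}, and that a co-H map (rather than a genuine suspension) is all that is needed to make the relevant square commute and obtain $H_1(\alpha)=H_1(\eta)\circ(\alpha_1(3)\circ\Sigma\alpha_2(5))$. Your argument as stated would not compile: you need to replace ``is a suspension'' with ``is a co-H map'' and cite the commutativity of the Berstein--Hilton diagram with respect to co-H maps.

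Once that is repaired, the rest is essentially the paper's proof. Step~2 (feeding $H_1(\alpha)=\psi$ into Theorem~\ref{cat} with $r=2$, $t=13$, using $\Sigma^2\psi\neq 0$ for $\cat M_3=3$ and $\Sigma^{n+2}\psi=0$ for $n\geq 2$ for part~(1)) is exactly what the paper does via Proposition~\ref{toda-p}. Step~3 (pulling $q_3$ back along a degree-$3$ self-map of $S^{14}$ and noting that $f_3\circ 3 = 3f_3 = 0$ because $\alpha_1(3)$, and hence $\Sigma\psi$, has order $3$) is the same as the paper's proof of part~(3), modeled on Proposition~\ref{M2}(4), and is correct as you have written it.
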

\begin{proof}
Properties (1) and (2) were proven in~\cite{Iw1} by application of Theorem~\ref{cat}. Namely, it was shown that $H_1(\alpha)=h_2(\alpha)=\psi$ and then  Proposition~\ref{toda-p}
was applied.

The argument for this is similar to the argument in the proof of Proposition~\ref{M2} with the difference is that $\alpha_1(3):S^6\to S^3$ is not a suspension. It turns out that $\alpha_1(3)$ is a co-H map
and this is sufficient to get the equality $H_1(\alpha)=H_1(\eta)\circ(\alpha_1(3)\circ\Sigma\alpha_2(5))$.

Proof of (3) is similar to the proof of (4) in Proposition~\ref{M2} and it is based on the fact that $\alpha_1(3)$ has the order 3.
\end{proof}

\section{Category of  the product}

It is known~\cite{Iw},\cite{SS} that in Ganea's definition of the category of the product of two complexes $X\times Y$ instead of 
the Ganea fibrations $p_k:G_k(X\times Y)\to X\times Y$
one can take a map with the smaller domain
$$\hat G_k(X\times Y)=\bigcup_{i+j=k}G_iX\times G_jY\subset G_kX\times G_kY.$$ There is the natural projection $\hat p_k:\hat G_k(X\times Y)\to X\times Y$
with fibers $$\hat F_k(X\times Y)=\bigcup_{i+j=k}F_iX\times F_jY\subset F_kX\times F_kY.$$
It is known that $\cat (X\times Y)\le k$ if and only if $\hat p_k$ admits a homotopy section~\cite{Iw}.
Though it is not important for our main result, we give a scketch of proof that in the case of CW complexes $X$ and $Y$ a homotopy section of $\hat p_k$ can be replaced by a section.
\begin{prop}\label{fibration}
For  CW complexes $X$ and $Y$ the map $$\hat p_k:\hat G_k(X\times Y)\to X\times Y$$ is a Serre fibration
\end{prop}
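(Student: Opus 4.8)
The plan is to verify the Serre fibration property directly from the definition, by checking the homotopy lifting property for cubes $I^m$. Since $\hat G_k(X\times Y)=\bigcup_{i+j=k}G_iX\times G_jY$ is a finite union of the closed subsets $G_iX\times G_jY$, and the projection $\hat p_k$ restricted to each piece is the product fibration $p_i^X\times p_j^Y: G_iX\times G_jY\to X\times Y$ (which is a Serre fibration as a product of Serre fibrations), the issue is purely one of assembling the pieces compatibly along their overlaps. The overlaps are themselves of the same form: $(G_iX\times G_jY)\cap(G_{i'}X\times G_{j'}Y)=G_{\min(i,i')}X\times G_{\min(j,j')}Y$, using the chain of inclusions $G_0X\subset G_1X\subset\cdots$ recorded after Theorem~1.3 in the excerpt.

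First I would set up the lifting problem: given a map $g:I^m\to \hat G_k(X\times Y)$ and a homotopy $G:I^m\times I\to X\times Y$ of $\hat p_k\circ g$, I want to lift $G$ to $\tilde G:I^m\times I\to \hat G_k(X\times Y)$ extending $g$. The natural approach is to induct on the number of pieces, i.e. on $k$ (equivalently on the number of pairs $(i,j)$ with $i+j=k$). Alternatively, and more cleanly, I would use the fact that $\hat G_k(X\times Y)$ maps to $X\times Y$ and use the product structure: decompose the cube $I^m$ via a triangulation fine enough that $g$ maps each simplex into a single piece $G_iX\times G_jY$, then lift simplex-by-simplex using the product-fibration lifting on each piece, matching up on shared faces because a shared face lands in an intersection $G_{i'}X\times G_{j'}Y$ which is again a product of Ganea spaces over which $\hat p_k$ restricts to a product Serre fibration. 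The compatibility of the chosen lifts along faces is guaranteed because one can always choose the lift over a sub-piece first and extend.

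The key steps, in order: (i) observe that each $p_i^X\times p_j^Y:G_iX\times G_jY\to X\times Y$ is a Serre fibration and that these are compatible on intersections since the intersections are again products of Ganea spaces with the restricted projection; (ii) given a compact pair $(I^m\times I, I^m\times 0)$ and maps as above, subdivide $I^m\times I$ so finely that the image of $g$ on each cell lies in one piece $G_iX\times G_jY$ and so that the combinatorics of which piece contains which cell is ``downward compatible'' with the poset of pairs $(i,j)$; (iii) order the cells so that faces come before the cells containing them, and build the lift $\tilde G$ inductively over skeleta, at each stage solving a relative lifting problem $\tilde G:(\text{cell}, \partial\text{cell})\to(G_iX\times G_jY, \hat G_k)$ which reduces to the homotopy lifting property (relative form) for the product Serre fibration $p_i^X\times p_j^Y$; (iv) note the boundary data already constructed lies in $\hat G_k$ and, on the sub-piece it actually touches, lies in the appropriate $G_{i'}X\times G_{i'}Y$, so the relative HLP applies. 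Concatenating these lifts gives the desired $\tilde G$.

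The main obstacle I anticipate is organizing the induction over pieces so that the partial lift already built over the faces of a cell genuinely lands in a product $G_{i'}X\times G_{j'}Y$ over which one has a fibration — i.e. ensuring the subdivision of $I^m$ can be chosen compatibly with the closed cover $\{G_iX\times G_jY\}$ so that the ``nerve'' of the cover interacts well with the cell structure. This is a standard but slightly fiddly general-position argument: since $X$ and $Y$ are CW complexes the cover $\{G_iX\times G_jY\}$ is by subcomplexes (after giving the Ganea spaces CW structures), $g$ may be cellularly approximated, and then each cell of a sufficiently fine subdivision of $I^m\times I$ maps into a single piece. Given the paper explicitly remarks this proposition ``is not important for our main result,'' I would keep the argument at the level of a sketch: state the reduction to the product fibrations, invoke the compatibility on intersections, and cite the standard patching of Serre fibrations over a closed cover by subcomplexes (e.g. via the cellular/simplicial approximation and skeleton-by-skeleton lifting just described), rather than writing out every combinatorial detail.
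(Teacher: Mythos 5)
Your high-level strategy agrees with the paper's: cover $\hat G_k(X\times Y)$ by the closed pieces $G_iX\times G_jY$ (with $i+j=k$), observe that $\hat p_k$ restricts to a product Serre fibration on each piece and that the intersections are again such products, and then glue lifts piece by piece. That part is correct, and your identification of the intersections via the nested chain $G_0\subset G_1\subset\cdots$ is exactly right.

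The gap is in step (ii), the claim that a sufficiently fine subdivision of the cube makes each cell of the domain map into a single piece. That claim is false for a general continuous map into a closed cover: closed covers have no Lebesgue number. For instance, a map $g\colon I\to\hat G_1(X\times Y)$ oscillating between $G_1X\times G_0Y$ and $G_0X\times G_1Y$ infinitely often near a point of $G_0X\times G_0Y$ admits no finite subdivision with the property you need. Your suggested remedy, cellular approximation, does not repair this: cellular maps do not send cells into single subcomplexes, and what you actually want, simplicial approximation, replaces $g$ by a homotopic map. Accounting for that replacement in the middle of a homotopy lifting problem requires exactly the kind of careful perturbation-and-patching bookkeeping you are trying to avoid, and it is this bookkeeping that the paper isolates as its Lemma 4.3 (``Pasting Lemma for Fibrations''). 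The paper's proof of that lemma is the real content of the proposition: it uses the fact that $G_kX$ and its finite unions are ANE spaces to find a CW neighborhood of $\bar h^{-1}(E_1\cap E_2)$ and a small fiberwise homotopy pushing $\bar h$ so that this neighborhood maps into $E_1\cap E_2$, after which the skeleton-by-skeleton lifting you describe does go through. In short: the decomposition, the product fibrations on pieces, and the inductive lifting are all as in the paper, but your write-up treats the subdivision step as routine when it is precisely the step that requires a nontrivial argument. You should either prove (or at least cite) a pasting lemma for Serre fibrations over a closed cover, or carry out the simplicial-approximation-and-homotopy-bookkeeping argument in detail rather than waving at it.
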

We recall that a map is claled a Serre fibration if it satisfies the homotopy lifting property for CW complexes.
This is equivalent to have the homotopy lifting property for $n$-cubes for all $n$.

The proof of Proposition~\ref{fibration} is based on the following two facts.

We say that a map $p:E\to B$  satisfies the {\em Homotopy Lifting Property for a pair} $(X,A)$ if for any
homotopy $H:X\times I\to B$ with a lift $H':A\times I\to E$ of the restriction $H|_{A\times I}$
and a lift $H_0$ of $H|_{X\times0}$ which agrees with $H'$, there is a lift $\bar H:X\times I\to E$
of $H$ which agrees with $H_0$ and $H'$.
The following is well-known~\cite{Ha}:
\begin{thm}\label{lift}
Any Serre fibration $p:E\to B$  satisfies the Homotopy Lifting Property for CW complex pairs $(X,A)$.
\end{thm}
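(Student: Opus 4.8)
The statement to prove is \theoref{lift}: every Serre fibration $p\colon E\to B$ has the Homotopy Lifting Property for CW pairs $(X,A)$, with the precise compatibility on $X\times 0$ and $A\times I$ described above. My plan is the standard skeleton-by-skeleton (cell-by-cell) induction, which reduces everything to the defining lifting property for cubes. First I would reduce to the case $X=D^n$, $A=\partial D^n$. The reduction is: one lifts over the relative skeleta $A\cup X^{(k)}$ by induction on $k$, attaching the $k$-cells of $X$ not already in $A$ one at a time; for each such $k$-cell with characteristic map $\Phi\colon (D^k,\partial D^k)\to (X^{(k)},A\cup X^{(k-1)})$, the lifting data already constructed pulls back along $\Phi\times\id_I$ to precisely an instance of the HLP for the pair $(D^k,\partial D^k)$, and the lifts glue because they agree on the overlaps $\partial D^k\times I$. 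Infinitely many cells in each dimension, and infinitely many dimensions, cause no trouble since cells of the same dimension have disjoint interiors and a map out of a CW complex is continuous iff it is continuous on each (closed) cell.

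\textbf{Key step: the pair $(D^n,\partial D^n)$.} Here I would use the homeomorphism of pairs $(D^n,\partial D^n)\times I \cong (D^n\times I,\ D^n\times 0\cup \partial D^n\times I)$, i.e.\ the fact that there is a retraction $\rho\colon D^n\times I\to D^n\times 0\cup \partial D^n\times I$ (radial projection from a point above the cylinder). Given $H\colon D^n\times I\to B$ together with a partial lift defined on $D^n\times 0\cup\partial D^n\times I$ (the union of $H_0$ and $H'$, which agree on $\partial D^n\times 0$ and hence define a continuous map on the union), I first set $G = H\circ\rho\colon D^n\times I\to B$, which restricts on the subset $D^n\times 0\cup\partial D^n\times I$ to $H$ restricted there. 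Then $G$ is a homotopy $D^n\times I\to B$ whose restriction to $D^n\times 0$ already has the prescribed lift; since $D^n\cong I^n$, the Serre fibration property gives a lift $\bar G\colon D^n\times I\to E$ of $G$ extending that lift on $D^n\times 0$. Precomposing $\bar G$ with $\rho$ fixes the behaviour on the corner: concretely, $\bar H := \bar G\circ\rho$ is a lift of $H\circ\rho\circ\rho = H\circ\rho = G$\,—\,wait, that is circular, so instead I argue directly that $\bar G$ already restricts correctly on $\partial D^n\times I$ up to a homotopy rel $D^n\times 0\cup\partial D^n\times 0$, and then use the retraction once more to straighten it. The clean way to phrase it: because $\rho$ restricted to $D^n\times 0\cup\partial D^n\times I$ is the identity, the composite $\bar G|_{D^n\times 0\cup\partial D^n\times I}$ is a lift of $H|_{D^n\times 0\cup\partial D^n\times I}$ which agrees with $H_0$ on $D^n\times 0$ but need not equal $H'$ on $\partial D^n\times I$; one then applies the HLP one more time on the pair $(\partial D^n, \emptyset)\times I$ — no, the honest fix is to observe that we did not actually need $\bar G$ to extend $H'$: re-examining the reduction, what the cell-attaching step feeds in is exactly a homotopy on $D^k\times I$ together with a lift on $D^k\times 0$ only (the $\partial D^k\times I$ data has already been folded into the base via $\rho$), so the plain cube-lifting property of $p$ suffices, and then restricting along the inclusion recovers a lift extending both $H_0$ and $H'$. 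I would present this via the retraction $\rho$ so that no separate homotopy-straightening is needed.

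\textbf{Assembling the induction and the main obstacle.} With the $(D^n,\partial D^n)$ case in hand, I return to general $(X,A)$: inductively suppose a lift $\bar H_{k-1}$ of $H$ has been built on $(A\cup X^{(k-1)})\times I$ extending $H_0$ on $X\times 0$ and $H'$ on $A\times I$. For each $k$-cell $e_\lambda$ of $X$ with $e_\lambda\not\subset A$, pull back $H$ and $\bar H_{k-1}$ along $\Phi_\lambda\times\id_I$ to get an instance of the pair problem for $(D^k,\partial D^k)$, solve it, and push forward. Do this for all $\lambda$ simultaneously; the resulting map on $(A\cup X^{(k)})\times I$ is well-defined and continuous because distinct $k$-cells meet only in $X^{(k-1)}$, where all the lifts restrict to $\bar H_{k-1}$, and continuity is checked cell-by-cell. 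Taking the union over all $k$ gives $\bar H$ on $X\times I$; continuity of $\bar H$ follows from the weak topology on $X$ together with the fact that $X\times I$ carries the product CW structure (here one uses that $I$ is locally compact, so $X\times I$ is again a CW complex with the expected cells). The main obstacle is purely bookkeeping rather than conceptual: getting the compatibility conditions exactly right in the $(D^n,\partial D^n)$ step so that the output lift genuinely restricts to the prescribed $H'$ on $\partial D^n\times I$ (not merely to something homotopic to it), which is what makes the cells glue on the nose; this is handled cleanly by routing everything through the retraction $\rho\colon D^n\times I\to D^n\times 0\cup\partial D^n\times I$, after which only the elementary cube-lifting property of $p$ is invoked. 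I will cite~\cite{Ha} for the standard details of this argument.
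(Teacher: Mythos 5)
The paper does not actually prove this statement --- it is quoted as well known with a citation to Hatcher --- and your skeleton-by-skeleton reduction to the pair $(D^n,\partial D^n)$ is precisely the argument given there. The inductive assembly over the relative skeleta $A\cup X^{(k)}$, the gluing of the cell-by-cell lifts along $X^{(k-1)}$, and the continuity remarks (weak topology; $X\times I$ is a CW complex because $I$ is locally compact) are all fine and need no change.

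The gap is in the key step, and you half-notice it yourself. The fact you need is not the existence of a retraction $\rho\colon D^n\times I\to D^n\times\{0\}\cup\partial D^n\times I$ but the existence of a homeomorphism of pairs
$$h\colon \bigl(D^n\times I,\ D^n\times\{0\}\bigr)\ \longrightarrow\ \bigl(D^n\times I,\ D^n\times\{0\}\cup\partial D^n\times I\bigr).$$
(The homeomorphism you actually wrote, $(D^n,\partial D^n)\times I\cong(D^n\times I,\ D^n\times\{0\}\cup\partial D^n\times I)$, is false as a statement about pairs: $\partial D^n\times I$ is not homeomorphic to $D^n\times\{0\}\cup\partial D^n\times I\cong D^n$.) With $h$ in hand the step is clean: let $\widetilde H$ denote the union of $H_0$ and $H'$ on $D^n\times\{0\}\cup\partial D^n\times I$; lift $H\circ h$ by the cube HLP starting from the lift $\widetilde H\circ h$ on $D^n\times\{0\}$, and compose the resulting lift with $h^{-1}$. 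Since $h$ carries $D^n\times\{0\}$ onto $D^n\times\{0\}\cup\partial D^n\times I$, the output restricts to $\widetilde H$ on the nose, which is exactly what the gluing of cells requires. The retraction by itself cannot do this job: replacing $H$ by $G=H\circ\rho$ changes the map on the base away from the subspace, so a lift of $G$ is not a lift of $H$; and your proposed repair --- that the $\partial D^k\times I$ data ``has been folded into the base via $\rho$'' so that only a lift on $D^k\times\{0\}$ is needed --- still has you lifting the wrong homotopy. Replace the retraction by the homeomorphism of pairs (built from the same radial projection, now read as a self-homeomorphism of the cylinder flattening the side wall onto the bottom) and the proof closes.
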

We use the abbreviation ANE for absolute neighborhood extensors for the class of finite dimensional spaces. Such spaces can be characterized as those which are locally $n$-connected for all $n$.
\begin{lem}[Pasting Lemma for Fibrations]
Let $p:E\to B$ be a map between ANE spaces with closed subsets $E_1,E_2\subset E$ such that all spaces $ E_1,E_2,E_1\cap E_2$ are ANE. Suppose that the restrictions $p_1=p|_{E_1}:E_1\to B$, $p_2=p|_{E_2}:E_2\to B$, and $p_0=p|_{E_1\cap E_2}:E_1\cap E_2\to B$,
are Serre fibrations. Then  the restriction $\hat p=p|_{E_1\cup E_2}:E_1\cup E_2\to B$ is a Serre fibration.
\end{lem}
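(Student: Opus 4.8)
The plan is to prove the Pasting Lemma by verifying the Homotopy Lifting Property for an arbitrary $n$-cube $(I^n, \varnothing)$, or more precisely, to set up an induction using the relative version. Fix a homotopy $H: I^n \times I \to B$ together with an initial lift $H_0: I^n \times 0 \to E_1 \cup E_2$. I would first cover the compact cube $I^n \times I$ by finitely many small boxes on which the behavior is controlled. Because $E_1$ and $E_2$ are closed, the preimages $H_0^{-1}(E_1)$ and $H_0^{-1}(E_2)$ are closed subsets of $I^n \times 0$ covering it; using a fine enough subdivision of $I^n$ together with the fact that the spaces are ANE (hence locally contractible, so small enough subsets map into one piece up to homotopy) one arranges that on each small subcube of the subdivision the problem is posed relative to one of $E_1$, $E_2$, or $E_1 \cap E_2$.

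The key technical point is the following local statement: given a subcube $Q \subset I^n$ and a slab $Q \times [a,b]$, if we already have a lift on $Q \times a$ and on $\partial Q \times [a,b]$ landing in (say) $E_1$, and $H$ restricted to $Q \times [a,b]$ is close enough to a map into $B$ that can be lifted through $p_1$, then we can extend the lift over $Q \times [a,b]$ into $E_1$. This is exactly where Theorem~\ref{lift} enters: $p_1: E_1 \to B$ is a Serre fibration, so it has the Homotopy Lifting Property for the CW pair $(Q, \partial Q) \cong (I^k, \partial I^k)$, which lets us fill in the lift over the slab extending the prescribed data on $Q \times a$ and $\partial Q \times [a,b]$. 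The subtlety is the \emph{compatibility} on overlaps: when two adjacent subcubes want to land in $E_1$ and $E_2$ respectively, their common face must land in $E_1 \cap E_2$, and we must have already constructed the lift there using $p_0$; this is why the hypothesis that $E_1 \cap E_2$ is also an ANE on which $p_0$ is a fibration is indispensable. The bookkeeping is handled by ordering the cells of the product CW structure on $I^n \times I$ by dimension and, within a dimension, processing them so that faces are always done before the cells they bound.

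Concretely, I would: (i) choose a triangulation/cubulation of $I^n \times I$ fine enough that each closed cell maps under $H$ into an open set over which the relevant restriction of $p$ admits local sections up to homotopy, and fine enough (by a Lebesgue-number argument using local $n$-connectedness of the ANE spaces) that the membership pattern in $E_1$ versus $E_2$ is consistent cell-by-cell with the pattern prescribed by $H_0$ on the bottom face; (ii) extend the lift over the skeleta of this cubulation by increasing dimension, at each cell $c$ applying the Homotopy Lifting Property of whichever of $p_1, p_2, p_0$ is appropriate — using $p_0$ on cells whose image the pattern forces into $E_1 \cap E_2$, and $p_1$ or $p_2$ otherwise — with the boundary data already constructed on $\partial c$ serving as the $A$-part of the relative lifting problem $(c, \partial c)$; (iii) observe that the resulting lift is globally well-defined and continuous because $E_1, E_2$ are closed and the lifts agree on $E_1 \cap E_2$, and it lands in $E_1 \cup E_2$ by construction. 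Carrying this out verifies that $\hat p = p|_{E_1 \cup E_2}$ has the Homotopy Lifting Property for all cubes, hence is a Serre fibration.

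The main obstacle I anticipate is making precise the claim that a sufficiently fine subdivision forces a clean partition of cells into ``$E_1$-type,'' ``$E_2$-type,'' and ``$E_1\cap E_2$-type'': one needs that $H_0$, defined only on the bottom face, together with the ANE property of the three closed pieces, propagates upward consistently so that no cell is asked to lie simultaneously in $E_1\setminus E_2$ near part of its boundary and in $E_2\setminus E_1$ near another part without its boundary data already being pinned into $E_1\cap E_2$. This is a standard but fiddly point in pasting arguments for fibrations, and the cleanest route is probably to first prove the lemma in the special case $B = \text{pt}$ (pasting of ANEs, which is classical), then bootstrap to the fibration statement by lifting the base homotopy through $p$ fiberwise over each small box using the ANE-valued selection — though if the paper's subsequent development only needs the conclusion over CW-complex bases, one can instead just quote the pasting theorem for ANRs of Hanner and combine it with Theorem~\ref{lift} directly.
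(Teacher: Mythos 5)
There is a genuine gap, and it is exactly the ``fiddly point'' you flag at the end but do not resolve. Your plan depends on step (i): choosing a subdivision of $I^n\times I$ fine enough that each cell carries a well-defined ``$E_1$-type,'' ``$E_2$-type,'' or ``$E_1\cap E_2$-type.'' But the only data determining such a type is the initial lift $H_0:I^n\to E_1\cup E_2$, and the sets $H_0^{-1}(E_1)$, $H_0^{-1}(E_2)$, $H_0^{-1}(E_1\cap E_2)$ are merely closed subsets of the cube — they have no reason to be subcomplexes of any subdivision, and in general no finite subdivision of $I^n$ will place each cell cleanly inside one of them. (Already in $I^1$, if $H_0^{-1}(E_1\cap E_2)$ is a Cantor set, every positive-length cell containing a limit point in its interior meets both $H_0^{-1}(E_1)\setminus H_0^{-1}(E_2)$ and $H_0^{-1}(E_2)\setminus H_0^{-1}(E_1)$.) A Lebesgue-number argument does not apply because $\{H_0^{-1}(E_1),H_0^{-1}(E_2)\}$ is a closed cover, not an open one. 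The appeal to Hanner's pasting theorem for ANRs at the end does not repair this either: that theorem only tells you $E_1\cup E_2$ is an ANR, which says nothing about the fibration structure, and the proposed ``$B=\mathrm{pt}$ then bootstrap'' route is vacuous (over a point every map lifts).

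The paper's proof resolves precisely this point, and in a cleaner way than a cell-by-cell, dimension-ordered extension. It pulls back the decomposition of $E$ only to the time-zero domain $Z$, setting $Z_i=\bar h^{-1}(E_i)$ and $Z_0=\bar h^{-1}(E_1\cap E_2)$. If $Z_0$ happens to be a subcomplex, one lifts $H|_{Z_0\times I}$ through $p_0$ first, then extends over $Z_1\times I$ rel $Z_0\times I$ via the relative HLP of $p_1$ (Theorem \ref{lift}), and then over all of $Z\times I$ — so the $E_1/E_2$ boundary compatibility is built in by the relative lifting, and there is no propagation-of-types argument to run. The whole point of the ANE hypothesis is to handle the case where $Z_0$ is \emph{not} a subcomplex: one uses the ANE property of $E_1\cap E_2$ (and of $E$) to find a CW-complex neighborhood $A\supset Z_0$ and a small homotopy $\hat h_t$ of the initial lift, stationary on $Z_0$, pushing $\bar h|_A$ into $E_1\cap E_2$; local contractibility of $B$ is then used to make this homotopy fiberwise (so it remains a lift of $H|_{Z\times 0}$ up to a controlled reparametrization of $H$). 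After this deformation one is back in the subcomplex case with $A$ in the role of $Z_0$. So the idea you are missing is: do not try to tile space-time and assign types to every cell; instead deform the \emph{initial lift} so that the type decomposition of the time-zero slice becomes a genuine CW decomposition, and then lift once over $Z_0\times I$, $Z_1\times I$, $Z_2\times I$ in that order.
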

\begin{proof}
Let $H:Z\times I\to B$ be a homotopy of a finite CW complex $Z$ with a fixed lift $\bar h:Z\to E_1\cup E_2$ of $H|_{Z\times\{0\}}$.
Let $Z_1=\bar h^{-1}(E_1)$, $Z_2=\bar h^{-1}(E_2)$, and  $Z_0=\bar h^{-1}(E_1\cap E_2)$.
If it happens to be that $Z_0$ is a CW complex we apply homotopy lifting property of $p_0$ to $H|_{Z_o\times I}$. Then we apply Theorem~\ref{lift} twice, first for to $p_1$ and then for
$\hat p$ and we are done. 

For general $Z_0$ we apply the standard trick scatched below. We use  the ANE property to find a CW complex neighborhood $A$ of $Z_0$ and  a map  $\hat h:Z\to E_1\cup E_2$  with $\hat h(A)\subset E_1\cap E_0$,
and with a small homotopy $\hat h_t:Z\to E_1\cup E_2$ joining $\bar h$ and $\hat h$ and stationary on $Z_0$. Using local contractibility property of $B$ we may achive that
the homotopy $\hat h_t$ is fiberwise. Thus, we obtain a lift of $H$  if our homotopy $H$ is stationary on a small neihborhood of $Z\times\{0\}$. Generally, we change the fiberwise homotopy $\hat h_t$ by pushing it along the 
paths of $H$ with such control  that $\hat h_1(A)\subset E_1\cap E_2$.
\end{proof}

Now the proof of Proposition~\ref{fibration} can be given by induction using the Pasting Lemma and the fact that
for a CW-complex $X$ all the spaces $G_kX$ are ANE.

There are two important facts  about $\hat p_k$~\cite{Iw}: There is a  lift $\mu:G_k(X\times Y)\to\hat G_k(X\times Y)$ of $p_k$ with respect to $\hat p_k$
and there is the inequality $\cat\hat G_k(X\times Y)\le k$. The latter can be proven by the cone length estimate (see~\cite{SS}).
The inequality $\cat\hat G_k(X\times Y)\le k$ implies that there is a lift 
$$\lambda=\lambda_{k,X,Y}:\hat G_k(X\times Y)\to G_k(X\times Y)$$ of $\hat p_k$ with respect to $p_k^{X\times Y}:G_k(X\times Y)\to X\times Y$.

Since $p_k$ is a split surjection for homotopy groups, the lift $\mu$ ensures that $\hat p_k$ is also a split surjection for homotopy groups.
Hence the inclusion homomorphism $\pi_n(\hat F_k(X\times Y))\to\pi_n(\hat G_k(X\times Y))$ is injective for all $n$.

\

Since the inclusions $F_rZ\to F_{r+1}Z$ are null-homotopic for all $Z$ and $k$, there are natural maps $$\eta_{i,j}:F_iX\ast F_jY\to (F_{i+1}X\times F_jY)\cup (F_iX\times F_{j+1}Y)\subset \hat F_{i+j+1}.$$

Let $Q=S^2\cup_\alpha D^{14}$ with $\alpha:S^{13}\to S^2$ from the construction of  $M_3$.
\begin{thm}\label{Iw}
Iwase's manifold $M_3$ satisfies the inequality
$$
\cat (M_3\times M_3)\le 5<2\cat M_3.
$$
\end{thm}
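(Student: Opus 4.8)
The plan is to exhibit a section of the modified Ganea fibration $\hat p_5 \colon \hat G_5(M_3 \times M_3) \to M_3 \times M_3$; by the discussion preceding the statement (the criterion of Iwase), this gives $\cat(M_3 \times M_3) \le 5$. Since $\cat M_3 = 3$ by \propref{M3}, this yields $5 < 2\cat M_3 = 6$. The starting point is the cell structure of $M_3$: as in the subsection on spherical bundles, $M_3 = S^2 \cup_\alpha e^{14} \cup_\psi e^{16}$ with $\alpha \colon S^{13}\to S^2$ the composite $\eta_2 \circ \alpha_1(3)\circ\Sigma\alpha_2(5)$. Write $Q = S^2 \cup_\alpha D^{14}$ for the 14-skeleton-type subcomplex, so that $M_3$ is obtained from $Q$ by attaching the top cell $e^{16}$, and there is a quotient map $M_3 \to M_3/Q = S^{16}$.

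First I would analyze $\cat(Q \times Q)$. The key computational input is that $H_1(\alpha) = \psi$ and, by \propref{toda-p}, $\Sigma^3\psi \ne 0$ but $\Sigma^4\psi = 0$; in particular, after enough suspensions $\alpha$ becomes trivial, which should force $\cat Q$ to drop appropriately. More precisely, $\cat Q \le 3$ in general (it is a three-cone), and using \theoref{BH} together with $\Sigma^{12}\psi = 0 \in \pi_{13}(S^{14}) \cdot(\cdots)$—i.e. the relevant suspension of $H_1(\alpha)$ landing in a stable range where it vanishes—one expects to show $\cat Q \le 2$, or at least to control the obstruction. Then the product structure on $\hat G_k$, namely $\hat F_k(X\times Y) = \bigcup_{i+j=k}F_iX \times F_jY$ and the maps $\eta_{i,j}\colon F_iX * F_jY \to \hat F_{i+j+1}$ recorded just before the theorem, let us assemble a section of $\hat p_k$ on $Q\times Q$ from sections over the factors, giving $\cat(Q\times Q)\le 4$ or $5$. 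I would use the Fibration–Cofibration Lifting framework (\propref{extract}, \remref{rema}): since $M_3 \times M_3$ is built from $Q \times Q$ by attaching cells of dimension $\ge 16$, a section over $Q\times Q$ extends over $M_3\times M_3$ provided the associated obstruction maps $\phi$ into the fibers of $\hat p_5$ are null-homotopic, and this is controlled by the connectivity estimate of \prop ref{old}: $\hat F_5$ is built from joins $F_iM_3 * F_jM_3$, which are highly connected ($F_iZ$ is $(i\cdot(r+1)-1)$-connected for $r$-connected $Z$), so that for the cells we are attaching the obstruction groups vanish.

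Concretely, the inductive/assembly argument runs as follows. Decompose $M_3 \times M_3$ using the cofiber sequences coming from $Q \hookrightarrow M_3$ in each factor: the cells to be attached beyond $(Q\times Q)$ are $e^{16}\times(\text{cells of }Q)$, $(\text{cells of }Q)\times e^{16}$, and $e^{16}\times e^{16}$, of dimensions $\le 32$ but attached along maps whose images lie in already-handled subcomplexes. For each such cell $e^{m}$, \remref{rema} produces an obstruction $\phi_m \in \pi_{m-1}(\hat F_5(M_3\times M_3))$, and by \propref{old} applied to the join pieces $F_iM_3 * F_jM_3$ (each $M_3$ is $1$-connected, so $F_iM_3$ is roughly $(2i-1)$-connected), $\hat F_5$ is connected enough that these homotopy groups vanish in the relevant range—using that the "worst" piece $F_5M_3 * F_0M_3$-type contributions and the join increments push connectivity well past $m-1$. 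I would also invoke \theoref{Stanley} to replace $M_3$ by skeleta where convenient, to keep dimensions under control while preserving category bounds.

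The main obstacle, I expect, is the base case: showing $\cat(Q\times Q) \le 5$ (indeed getting a section of $\hat p_5$ over $Q\times Q$ rather than merely $\cat(Q\times Q)\le \cat Q + \cat Q \le 6$). This is exactly where Ganea's conjecture fails for $M_3$, so the naive product bound is off by one, and one must genuinely exploit that the relevant Berstein–Hilton/James–Hopf invariant $\psi = H_1(\alpha)$ suspends to zero — i.e. that $\cat(M_3 \times S^n) = 3$, not $4$ (\propref{M3}(1)) — to gain the extra economy in the fiberwise join. Translating "$\alpha$ becomes stably trivial fast" into an explicit partial section of $\hat p_5(Q\times Q)$, via the maps $\eta_{i,j}$ and the splitting $r_*$ of \corref{split}, is the technical heart of the argument; the extension to all of $M_3\times M_3$ is then formal connectivity bookkeeping.
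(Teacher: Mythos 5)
Your high-level intuition is right: one should exhibit a (homotopy) section of $\hat p_5$, and the key leverage comes from $\Sigma^4 H_1(\alpha)=\Sigma^4\psi=0$. But the mechanism you propose for getting there has a genuine gap, and the decomposition you use is not sharp enough.

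The central error is the claim that the obstruction groups $\pi_{m-1}(\hat F_5(M_3\times M_3))$ vanish ``by connectivity.'' They do not: $\hat F_5(M_3\times M_3)=\bigcup_{i+j=5}F_iM_3\times F_jM_3$ contains the piece $F_0M_3\times F_5M_3=\Omega M_3\times F_5M_3$, and since $M_3$ is only $1$-connected, $\Omega M_3$ is merely $0$-connected, so $\hat F_5$ is only $0$-connected. Hence $\pi_{15},\pi_{17},\pi_{29},\pi_{31}$ of $\hat F_5$ are far from zero, and you cannot extend a section over the cells of dimensions $16,18,30,32$ by ``formal connectivity bookkeeping.'' The obstructions are specific classes that must be shown to vanish by computation (via the maps $\eta_{i,j}$ factoring them through joins $F_iX*F_jY$, which are highly connected — this is where the connectivity of the joins matters, not of $\hat F_5$ itself), using $\Sigma^4\psi=0$, Harper's argument for $Q\times Q$, and Iwase's explicit section $\sigma:Q\to G_2Q$ with $H_2^\sigma(\psi)$ factoring through $\Sigma^2 H_1(\alpha)$. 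Also, a small slip: $Q=S^2\cup_\alpha D^{14}$ has three cells, so it is a $2$-cone and $\cat Q\le 2$, not $\le 3$; Harper in fact showed $\cat(Q\times Q)\le 3$.

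The second problem is that starting from a section of $\hat p_5$ over $Q\times Q$ gives away too much: there are still seven cells (dimensions $16,16,18,18,30,30,32$) to attach, and each attachment may raise the category, so you would not get a bound of $5$ that way. The paper instead builds a careful intermediate subcomplex $X_3=(M_3\times S^2)\cup(S^2\times M_3)\cup(Q\times Q)$ — which already absorbs both $16$-cells, both $18$-cells and the $28$-cell — and proves the much stronger statement $\cat X_3\le 3$ by explicitly lifting against $\hat p_3$ (Proposition~\ref{IwQ3}), solving three Fibration--Cofibration problems for the attaching maps $\psi_\pm$ and $\tilde\alpha$ of the cells of $X_3$ relative to $X_2$. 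Only after that does the paper apply the crude ``$+1$ per cell-attachment'' twice: $X_4=X_3\cup(\text{two cells})$ gives $\cat X_4\le 4$, and $X_5=M_3\times M_3=X_4\cup e^{32}$ gives $\cat\le 5$. Without this refined filtration and the explicit obstruction computations in Lemmas~\ref{M} and~\ref{Q}, your argument cannot close.
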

\begin{proof}
We recall that $M_3=S^2\cup_\alpha e^{14}\cup_\psi e^{16}=Q\cup e^{16}$. Consider the product CW-complex structure on $M_3\times M_3$.
Define a equence of CW subspaces
$$
X_1\subset X_2\subset X_3\subset X_4=M_3\times M_3
$$
as follows

$X_1=(Q\times S^2)\cup (S^2\times Q)$,

$X_2=X_1\cup (M_3\times\ast)\cup(\ast\times M_3)=X_1\cup (e^{16}\times\ast)\cup (\ast\times e^{16})$,

$X_3=(M_3\times S^2)\cup(S^2\times M_3)\cup (Q\times Q)=X_2\cup(e^{16}\times e^2)\cup (e^2\times e^{16})\cup (e^{14}\times e^{14})$,

$X_4=(M_3\times Q)\cup (Q\times M_3)=X_2\cup (e^{16}\times e^{14})\cup(e^{14}\times e^{16})$, and

$X_5=M_3\times M_3=X_3\cup (e^{16}\times e^{16})$.

It suffices to prove the inequality $\cat X_3\le 3$. Then since $X_4$ is obtained from $X_3$  by attaching cells to $X_3$, we get $\cat X_4\le 4$. Finally,  $\cat X_4\le 5$.
\end{proof}

\begin{prop}\label{IwQ3}
 $\cat X_3\le 3$.
\end{prop}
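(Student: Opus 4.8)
The plan is to exhibit a section of the third Ganea fibration $p_3^{X_3}\colon G_3X_3\to X_3$. The starting observation is that $X_3$ is obtained from
$$W:=(M_3\times S^2)\cup(S^2\times M_3)$$
by attaching a \emph{single} cell: every cell $e^i\times e^j$ of $Q\times Q$ other than $e^{14}\times e^{14}$ already lies in $M_3\times S^2$ or in $S^2\times M_3$, so $X_3=W\cup_\beta D^{28}$, where $\beta\colon S^{27}\to X_1:=(Q\times S^2)\cup(S^2\times Q)\subset W$ is the attaching map of the top cell $e^{14}\times e^{14}$ of $Q\times Q$.

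First I would show $\cat W\le 3$. By \propref{M3} we have $\cat(M_3\times S^2)=\cat(S^2\times M_3)=3$, so $\hat p_3$ admits sections over $M_3\times S^2$ and over $S^2\times M_3$; composing with $\lambda$ and with the Ganea maps induced by $M_3\times S^2\hookrightarrow W$ and $S^2\times M_3\hookrightarrow W$ yields sections $s_1,s_2$ of $p_3^W$ over these two subcomplexes. Their restrictions to the overlap $(M_3\times S^2)\cap(S^2\times M_3)=S^2\times S^2$ are two lifts of the inclusion $S^2\times S^2\hookrightarrow W$ to $G_3W$. Since $W$ is simply connected, $F_3W=\ast^4\Omega W$ is $6$-connected, while $\dim(S^2\times S^2)=4$, so every obstruction to a vertical homotopy between $s_1|_{S^2\times S^2}$ and $s_2|_{S^2\times S^2}$ vanishes. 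After such a homotopy $s_1$ and $s_2$ agree on $S^2\times S^2$ and glue to a section of $p_3^W$ over $W$; hence $\cat W\le 3$. (Restricting the triple cup product $(a\otimes 1)(1\otimes a)(b\otimes 1)$ from $M_3\times M_3$ shows $\cuplength W\ge 3$, so in fact $\cat W=3$; in any case, if $\cat W\le 2$ then $\cat X_3\le\cat W+1\le 3$, so we may assume $\cat W=3$.)

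By \theoref{BH} it now suffices to prove that the Berstein--Hilton invariant $H_3(\beta)\subset\pi_{27}(F_3W)$ contains $0$; then $\cat X_3=\cat(W\cup_\beta D^{28})\le 3$. This is the heart of the matter. The map $\beta$ is the attaching map of the product cell $e^{14}\times e^{14}$ in $Q\times Q$, hence is homotopic to the generalized Whitehead product of the attaching map $\alpha\colon S^{13}\to S^2$ of the $14$-cell of $Q$ with itself; it factors through the join $S^{27}=S^{13}\ast S^{13}$ and, after composition with the maps $\eta_{i,j}$ introduced above, through $\hat F_3(Q\times Q)$. I would choose the section $\sigma$ of $p_3^W$ so that over $X_1$ it factors, via $\lambda$, through a section of $\hat p_3$ on $\hat G_3(Q\times Q)|_{X_1}$ assembled from a section of $p_2^Q$ (available since $\cat Q=2$) in one coordinate and a section of $p_1^{S^2}$ (available since $\cat S^2=1$) in the other, the two matched over $S^2\times S^2$ by way of $\eta_{1,1}$. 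For such a $\sigma$ the element $r_*\sigma_*\beta$ is expressed through iterated suspensions of the James--Hopf invariant $H_1(\alpha)=h_2(\alpha)=\psi\in\pi_{13}(S^3)$ identified in \propref{M3}; at least four suspensions occur (indeed $r_*\sigma_*\beta$ lands in a copy of $\pi_{27}(S^{m})$ with $m\ge 17$), so it vanishes because $\Sigma^4\psi=0$ by \propref{toda-p}. At the places where the section drops from the $G_2$- to the $G_1$-level one may instead invoke the null-homotopy of the fiber inclusions $F_rZ\to F_{r+1}Z$, which factors the class in question through $F_rW\to F_{r+1}W$ and hence kills it.

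The main obstacle is precisely this last step: pinning down $\beta$ up to homotopy and tracking the chosen section through the maps $\eta_{i,j}$ carefully enough to recognise $H_3(\beta)$ as an iterated suspension (of order $\ge4$) of $\psi$. A subsidiary point is that the section must be chosen compatibly on the four cells of $W$ not in $X_1$, namely $e^{16}\times e^0,\ e^0\times e^{16}$ (dimension $16$) and $e^{16}\times e^2,\ e^2\times e^{16}$ (dimension $18$); their attaching maps factor through $Q\vee Q\subset W$ and through $Q\vee S^2$, $S^2\vee Q$, so the relevant Berstein--Hilton invariants are again governed by $\psi$ — for the $16$-cells by $H_2(\psi)$, which becomes trivial one Ganea level up by the null-homotopy of $F_2W\to F_3W$, and for the $18$-cells by a further suspension of $\psi$ — and vanish in the required range, so that a single section $\sigma$ of $p_3^W$ with all the properties above does exist.
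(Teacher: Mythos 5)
Your decomposition $X_3 = W\cup_\beta D^{28}$ with $W=(M_3\times S^2)\cup(S^2\times M_3)$ is a legitimate alternative to the paper's filtration, which attaches three cells (two $18$-cells and one $28$-cell) to $X_2=(Q\times S^2)\cup(S^2\times Q)\cup(M_3\vee M_3)$; your $W$ already absorbs the two $18$-cells by quoting $\cat(M_3\times S^2)=3$, so only the $28$-cell $e^{14}\times e^{14}$ remains. At this level of strategy the two proofs agree: both rest on Harper's calculation that $H_1^\sigma(\alpha)\ast H_1^\sigma(\alpha)=\Sigma\bigl(H_1(\alpha)\wedge H_1(\alpha)\bigr)$ is null because $\Sigma^4 H_1(\alpha)=\Sigma^4\psi=0$, and both route the $18$-cell data through Iwase's $\cat(M_3\times S^2)\le 3$. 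So the idea is sound.

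Where your sketch falls short of a proof is in the two places you yourself flag, and there is a third you don't. (i) The gluing that gives $\cat W\le 3$ and the subsequent requirement that $\sigma|_{X_1}$ factor nicely through $\hat G_3$ of a product are in tension: the obstruction-theoretic vertical homotopy over $S^2\times S^2$ destroys any a priori product form of $s_1|_{Q\times S^2}$, so you cannot simply ``choose'' the glued $\sigma$ to have the desired structure afterwards; you must show the two pieces can be made to agree \emph{on the nose} over $S^2\times S^2$ while retaining the product form. The paper sidesteps this entirely by never gluing: it builds one explicit partial section $s=\bar i\circ\bar\xi\circ\hat\sigma$ of $\hat p_3$ from a single standard section $\sigma'$ of $M_3$ (restricting to $\sigma$ on $Q$), so there is nothing to reconcile. (ii) You invoke \theoref{BH} directly, which asks for $H_3(\beta)$ to contain $0$ in $\pi_{27}(F_3W)$, but your actual computation naturally lives in the fiber $\hat F_3$ of a product ($\hat F_3(Q\times Q)$ mapping into $\hat F_3(M_3\times M_3)$), via the maps $\eta_{i,j}$ and the lift $\lambda$. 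Transporting the vanishing back from $F_3$ of the product to $F_3W$ is exactly what the paper's closing pull-back diagram and the $(2\cdot3+29-1)$-equivalence of \propref{old} are for; you would need an analogous range argument, and it is not stated. (iii) The Barratt--Hilton step and the bookkeeping through $\eta_{1,1}$ are only gestured at (``at least four suspensions occur''), which is precisely the content of \cite{H} that the paper reproduces in Lemma~\ref{Q}. None of these is a fatal flaw of approach, but as written the argument does not close, and the paper's choice to construct one explicit section and work entirely inside $\hat G_3(M_3\times M_3)$ until the final connectivity step is what makes those three points painless.
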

\begin{proof} 
Note that $X_3$ is obtained by attaching three cells to $X_2$, two cells of dimension 18 and one of dimension 28.
The attaching map $\alpha$ defines the attaching map $\bar\alpha$ in $$Q\times Q=X_1\cup_{\bar\alpha}e^{28}.$$
We denote by $\tilde\alpha$ the attaching map of the 28-cell to $X_2$. Thus $\tilde\alpha$ is the composition $\bar\alpha$ and the inclusion $X_1\subset X_2$.
Let $$\bar\psi:S^{17}\to (M_3\times\ast)\cup(Q\times S^2)$$ denote the attaching map of the top cell in $M_3\times S^2$. 
Note that $$X_2=((M_3\times\ast)\cup(Q\times S^2))\cup((S^2\times Q)\cup(\ast\times M_3)).$$
Then the attaching maps of the 18-cells in $X_3$
can be presented as $\psi_-=i_-\circ\bar\psi$ and $\psi_+=i_+\circ\bar\psi$ where $i_\pm$ are two symmetric inclusions of $(M_3\times\ast)\cup(Q\times S^2)$ into $X_2$.

We consider the embeddings $G_3M_3\times\ast\to G_3M_3\times G_0M_3$ and $G_2Q\times G_1S^2\to G_2M_3\times G_1M_3$ generated by the inclusions $\ast,S^2,Q\subset M_3$.
Then $$\bar X_2=(G_3M_3\times\ast)\cup (G_2Q\times G_1S^2)\cup (G_1S^2\times G_2Q)\cup(\ast\times M_3)$$ is embedded in $\hat G_3(M_3\times M_3)$. 
Let $\bar i$ denotes the embedding.
There is a natural projection $p'$ of $\bar X_2$ onto
$X_2=(M_3\times\ast)\cup (Q\times S^2)\cup (S^2\times Q)\cup(\ast\times M_3)$ that makes 
a commutative diagram
$$
\begin{CD}
\bar X_2 @>\bar i>> \hat G_3(M_3\times M_3)\\
@Vp'VV @V\hat p_3VV\\
X_2 @>\subset>> M_3\times M_3.\\
\end{CD}
$$
We define a section $\hat s:X_2\to\bar X$ of $p'$ as follows. 

Let $\psi:S^{15}\to Q$ be the attaching map in the construction of $M_3=S^2\cup_\alpha e^{14}\cup_\psi e^{16}$.
It was proven in~\cite{Iw1} and explicitly exhibited in~\cite{Iw2} that there is a section $\sigma:Q\to G_2Q$ such that 
$H_2^\sigma(\psi)$  is homotopic to the composition $\beta\circ\Sigma^2H_1(\alpha)$ for some $\beta$.
The section $\sigma$ extends to a section $\sigma':M_3\to G_3M_3$
(called a standard section in~\cite{Iw1}). Then we define $$\hat\sigma:X_2\to X_2'=(G_2Q\times S^2)\cup (S^2\times G_2Q)\cup (G_3M_3\times pt)\cup (pt\times G_3M_3)$$ to be the restriction of $\sigma'\times 1_{S^2}\cup 1_{S^2}\times \sigma'$ to $Y$. The space  $X_2'$ has a natural inclusion $\bar\xi:X'_2\to\bar X_2$. We define  
$$s=\bar i\circ\xi\circ\hat\sigma:X_2\to\hat G_3(M_3\times M_3).$$

For each attaching map $f\in\{\psi_-,\psi_+,\tilde\alpha\}$ we consider the  Fibration-Cofibration Lifting problem
$$
\begin{CD}
\hat  F_3(M_3\times M_3) @>i>> \hat G_3(M_3\times M_3) @>\hat p_3>> M_3\times M_3\\
@. @AsAA @ A\subset AA\\
S^{n}@>f>> X_2 @>j>\subset> C(f)\\
\end{CD}
$$
In Lemma~\ref{M} and Lemma~\ref{Q} we show that  each of the three lifting problems
has a solution. This defines a lift $\bar s:X_3\to \hat G_3(M_3\times M_3)$ of the inclusion $X_3\subset M_3\times M_3$ extending $s$.

Let $\bar s:X_3\to\hat G_3(M_3\times M_3)$ be such a lift. Then $\lambda\circ\bar s:X_3\to G_3(M_3\times M_3)$ is a lift of the inclusion $X_3\subset M_3\times M_3$ with respect to 
$p_3:G_3(M_3\times M_3)\to M_3\times M_3$ where $\lambda:\hat G_3(M_3\times M_3)\to G_3(M_3\times M_3)$ is a lift of $\hat p_3$ with respect $p_3$. Since the inclusion $X_3\to M_3\times M_3$ is a 29-equivalence, by Proposition~\ref{old} the inclusion of fibers  $F_3X_3\to F_3(M_3\times M_3)$ is a $(2\times3+29-1)$-equivalence.
In the pull-back diagram
$$
\begin{CD}
G_3X_3 @>q>> Z@>>> G_3(M_3\times M_3)\\
@. @Vp'VV @Vp_3VV\\
@. X_3 @>\subset >>M_3\times M_3.\\
\end{CD}
$$
the lift $\lambda\bar s$ defines a section $\nu':X_3\to Z$. The map $p^{X_3}$ factors as $p'\circ q$ where $q$ is a $34$-equivalence.
Since $\dim X_3=28$, there  is a lift of $\nu'$ to a section of $G_3X_3$. Therefore, $\cat X_3\le 3$.

\end{proof}
\begin{lemma}\label{M}
There are lifts $s_{1,2}: C(\psi_\pm)\to \hat G_3(M_3\times M_3)$ with respect to $\hat p_3:\hat G_3(M_3\times M_3)\to M_3\times M_3$ of the inclusion $C(\psi_\pm)\subset M_3\times M_3$ that extend $s$.
\end{lemma}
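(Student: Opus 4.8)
The plan is to solve the two Fibration--Cofibration Lifting problems for $\psi_-=i_-\circ\bar\psi$ and $\psi_+=i_+\circ\bar\psi$ simultaneously, exploiting the symmetry $i_\pm$ swapping the two factors. By \propref{extract} and \remref{rema}, since $\hat p_3$ is a split surjection on homotopy groups, each lifting problem reduces to showing that the obstruction class
$$
\phi_\pm = \hat F_3(i_\pm)\circ r_*\,s_*(\psi_\pm)\in\pi_{17}(\hat F_3(M_3\times M_3))
$$
is null-homotopic, and it suffices to handle one of them. Here $\bar\psi:S^{17}\to (M_3\times\ast)\cup(Q\times S^2)$ is the attaching map of the top cell of $M_3\times S^2$, so the first step is to analyze $\bar\psi$ itself: writing $M_3=Q\cup_\psi e^{16}$, the complex $(M_3\times\ast)\cup(Q\times S^2)$ is obtained from $Q\times S^2$ by attaching a $16$-cell along $\psi\times\ast$, and the top cell of $M_3\times S^2=(M_3\times\ast)\cup(M_3\times e^2)$ is attached along a map built from $\psi:S^{15}\to Q\subset M_3$ crossed with $S^2$. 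Thus $\bar\psi$ factors through the relative attaching data and, up to the usual James--Hopf bookkeeping, its relevant component is governed by $\psi$ and hence by $H_1(\alpha)=H_2^\sigma(\psi)$ composed with $\Sigma^2$-suspension (this is exactly the content quoted from \cite{Iw1,Iw2}: $H_2^\sigma(\psi)\simeq\beta\circ\Sigma^2 H_1(\alpha)$).

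The key computational step is then to push this through the section $s=\bar i\circ\bar\xi\circ\hat\sigma$ and the retraction $r$ onto the fiber $\hat F_3(M_3\times M_3)=\bigcup_{i+j=3}F_iM_3\times F_jM_3$. Because $\hat\sigma$ is the restriction of $\sigma'\times 1_{S^2}\cup 1_{S^2}\times\sigma'$, the obstruction $\phi_+$ lands, via the maps $\eta_{i,j}:F_iM_3\ast F_jM_3\to\hat F_{i+j+1}$, in the piece coming from $F_2Q\times F_1S^2$ inside $F_2M_3\times F_1M_3$; so one must identify $r_*s_*(\psi_+)$ with (the image under $\eta_{2,1}$ or a related map of) a class expressed through $H_2^\sigma(\psi)=\beta\circ\Sigma^2 H_1(\alpha)$ in the first coordinate and the Hopf-type class $H_1$ of $S^2$ in the second. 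The crucial point that makes it vanish is dimensional: one gets a sufficiently high suspension of $H_1(\alpha)=\psi$ (the attaching map of $M_3$), and by \propref{toda-p} — recall $H_1(\alpha)=\psi=\alpha_1(3)\circ\alpha_2(6)$ and $\Sigma^4\psi=0$ — after suspending at least four times (which the extra $S^2$-factor and the stabilization built into $F_{i+1}\supset F_i$ being null-homotopic provide) the class dies. Concretely I would argue that the relevant suspension degree is $\ge r+n$ with $r=2$, $n\ge 2$, matching the second bullet of \theoref{cat}, so the obstruction is $\Sigma^{\ge 4}\psi = 0$.

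Therefore the proof runs: (i) identify $\bar\psi$ and show the only potentially nonzero component of the obstruction factors through $H_2^\sigma(\psi)\simeq\beta\circ\Sigma^2H_1(\alpha)$; (ii) track this component through $s_*$ and $r_*$ into a named summand $F_2M_3\times F_1M_3$ of $\hat F_3(M_3\times M_3)$, using that $F_iM_3\to F_{i+1}M_3$ is null-homotopic to kill all cross-terms and the excess of the cell-structure; (iii) observe the resulting class is a suspension $\Sigma^m H_1(\alpha)$ with $m\ge 4$ and invoke \propref{toda-p} (together with the Serre isomorphism and James splitting already recorded in the preliminaries) to conclude it is null-homotopic; (iv) by symmetry the same holds for $\psi_-$, and then \propref{extract} yields the two lifts $s_{1,2}$ extending $s$. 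I expect step (ii) — carefully matching $r_*s_*(\bar\psi)$ with the explicit Berstein--Hilton data for $\psi$ inside the combinatorially complicated fiber $\hat F_3$, i.e.\ verifying that no component outside the controllable $F_2Q\times F_1S^2$ summand survives — to be the main obstacle; once the obstruction is pinned down to a suspension of $\psi$, its vanishing is immediate from \propref{toda-p}.
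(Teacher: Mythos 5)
Your strategy is essentially the paper's: reduce via Proposition~\ref{extract} to showing the obstruction $\phi_\pm\in\pi_{17}(\hat F_3(M_3\times M_3))$ vanishes, trace it back to a suspension of Iwase's invariant $H_2^\sigma(\psi)\simeq\beta\circ\Sigma^2H_1(\alpha)$, and invoke $\Sigma^4H_1(\alpha)=\Sigma^4\psi=0$ from Proposition~\ref{toda-p}. The total suspension count is right: the extra $S^2$-factor contributes $\Sigma^2$, so the obstruction factors through $\Sigma^2H_2^\sigma(\psi)=\Sigma^2\beta\circ\Sigma^4H_1(\alpha)=0$.

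However, the step you flag as the main obstacle — step (ii), pinning down $r_*s_*(\psi_\pm)$ inside $\hat F_3$ — is precisely what still needs an argument, and you do not supply one. The paper does not re-derive this from the cell structure and ``James--Hopf bookkeeping'' as you suggest; it imports wholesale a homotopy-commutative diagram from Iwase's Proposition 3.7 in \cite{Iw1} (established there as part of Iwase's proof that $\cat(M_3\times S^2)\le 3$), amended by one column to accommodate the two inclusions $i_\pm$ into $X_2$. That diagram identifies the obstruction as
$$\phi_\pm \;=\; j_\pm\circ\eta_{2,0}\circ\theta, \qquad \theta \;=\;\bar j\circ\bigl(H_2^\sigma(\psi)\ast 1_{S^1}\bigr):S^{17}=S^{15}\ast S^1\longrightarrow F_2M_3\ast F_0S^2,$$
where $\bar j$ is induced by $Q\hookrightarrow M_3$ and $S^1\to\Omega\Sigma S^1=F_0S^2$. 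Note this lives in the fiberwise \emph{join} $F_2M_3\ast F_0S^2$ (not a product piece $F_2Q\times F_1S^2$ as in your sketch), and the relevant structure map is $\eta_{2,0}$ rather than your $\eta_{2,1}$. The join with $1_{S^1}$ is exactly the $\Sigma^2$ you anticipate, since $F_2Q\ast S^1\simeq\Sigma^2 F_2Q$, which makes $\theta$ null-homotopic via $\Sigma^2H_2^\sigma(\psi)=\Sigma^2\beta\circ\Sigma^4H_1(\alpha)=0$. Two small slips to correct along the way: ``$H_1(\alpha)=H_2^\sigma(\psi)$'' should not be an equality (the correct relation is $H_2^\sigma(\psi)\simeq\beta\circ\Sigma^2H_1(\alpha)$, with $H_1(\alpha)=\psi=\alpha_1(3)\circ\alpha_2(6)$ from Proposition~\ref{M3}), and the domain of the obstruction is a join, not a product. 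In short: your plan is sound and uses the right vanishing, but without Iwase's Proposition 3.7 (or an independent re-derivation of its diagram) the identification of $\phi_\pm$ as $\Sigma^2H_2^\sigma(\psi)$ remains a gap.
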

\begin{proof}
Since the inclusion $i:\hat F_3(M_3\times M_3)\to \hat G_3(M_3\times M_3)$ is injective on the homotopy groups, in view of Proposition~\ref{extract} it suffices to define
a null-homotopic map $\phi_\pm: S^{17}\to \hat F_3(M_3\times M_3)$ such that $i\circ\phi_\pm$ is homotopic to $s\circ\psi_\pm$.

The construction of $\phi_\pm$ is based on Iwase's proof of the inequality $\cat (M_3\times S^2)\le 3$~\cite{Iw1}. We consider the homotopy  commutative diagram from Proposition 3.7 in~\cite{Iw1} amended by the right column
$$
\begin{CD}
F_2M_3\ast F_0S^2 @>>> G_3M_3\times pt\cup G_2M_3\times G_1S^2 @>\subset>>\bar X_2\\
@A\bar jAA @A\subset AA @A\bar\xi AA\\
F_2Q\ast S^1 @. G_3M_3\times pt\cup G_2Q\times S^2  @>\subset>> X_2'\\
@A{H_2^{\sigma}(\psi)\ast 1_{S^1}}AA @AA{\sigma_1}A @A\hat\sigma AA\\
S^{15}\ast S^1 @>\bar\psi>> M_3\times pt\cup Q\times S^2 @>i_\pm>\subset> X_2\\
\end{CD}
$$
where  $\bar j$ is generated by the inclusions $Q\to M_3$ and $S^1\to\Omega\Sigma S^1=F_0S^2$, $\sigma_1$ is the restriction of $\hat\sigma$,
and $\psi$ is taken from Proposition~\ref{M3}.

 By Proposition~\ref{toda-p}, $\Sigma^4\psi=0$. 
Therefore, 
$$H_2^{\sigma}(\psi)\ast 1_{S^1}=\Sigma^2H_2^\sigma(\psi)=\Sigma^2\beta\circ\Sigma^4H_1(\alpha)=0.$$ 
Therefore, $\theta=\bar j\circ(H_2^{\sigma}(\psi)\ast 1_{S^1})$ is null-homotopic. Thus, we have a homotopy commutative diagram
$$
\begin{CD}
\hat F_3(M_3\times M_3) @>i>> \hat G_3(M_3\times M_3)\\
@A\phi_\pm AA  @AsAA\\
S^{17} @>\psi_\pm>> X_2
\end{CD}
$$
with null-homotopic $\phi_\pm=j_\pm\circ\eta_{2,0}\circ\theta$. Here $\eta_{2,0}:F_2M_3\ast F_0S^2\to\hat F_3(M_3\times S^2)$ and $j_\pm:\hat  F_3(M_3\times S^2)\to \hat  F_3(M_3\times M_3)$ are the inclusions induced by $i_\pm$. By Proposition~\ref{extract} there are the required lifts $s_1$ and $s_2$.
\end{proof}
\begin{lemma}\label{Q}
There is lifts $s_3: C(\tilde\alpha)\to \hat G_3(M_3\times M_3)$ with respect to $\hat p_3:\hat G_3(M_3\times M_3)\to M_3\times M_3$ of the inclusion $C(\tilde\alpha)\subset M_3\times M_3$ that extends $s$.
\end{lemma}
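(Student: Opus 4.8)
The plan is to solve the stated Fibration-Cofibration Lifting problem by the method of Lemma~\ref{M}. Since $\tilde\alpha$ factors as $S^{27}\xrightarrow{\bar\alpha}X_1\hookrightarrow X_2$ and $Q\times Q=X_1\cup_{\bar\alpha}e^{28}$, the mapping cone $C(\tilde\alpha)$ is the pushout $X_2\cup_{X_1}(Q\times Q)$, so it suffices to extend the restriction $s|_{X_1}$ over $Q\times Q$ (as a lift of the inclusion into $M_3\times M_3$) and glue the result with $s$ along $X_1$. Moreover $s|_{X_1}$ takes values in $(G_2Q\times G_1S^2)\cup(G_1S^2\times G_2Q)\subset\hat G_3(Q\times Q)$, so one may carry out the whole argument inside the Ganea data of $Q\times Q$ and push forward along the natural map $\hat G_3(Q\times Q)\to\hat G_3(M_3\times M_3)$ at the end; in this form the lemma is, in essence, the assertion that $\cat(Q\times Q)\le 3$ together with the compatibility of the witnessing section with $s|_{X_1}$. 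As for $M_3\times M_3$, the map $\hat p_3:\hat G_3(Q\times Q)\to Q\times Q$ is a split surjection on homotopy groups, so the inclusion $\hat F_3(Q\times Q)\hookrightarrow\hat G_3(Q\times Q)$ is injective on homotopy, and by Proposition~\ref{extract} the extension exists if and only if the obstruction class $\phi:S^{27}\to\hat F_3(Q\times Q)$ with $i\circ\phi\simeq s|_{X_1}\circ\bar\alpha$ is null-homotopic.

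To identify $\phi$ I would first record the shape of $\bar\alpha$. Because $\alpha:S^{13}\to S^2\subset Q$ bounds the $14$-cell of $Q$ through its characteristic map $\chi:D^{14}\to Q$, the attaching map of $e^{14}\times e^{14}$ is the canonical map
$$
\bar\alpha:S^{27}=\partial(D^{14}\times D^{14})=(S^{13}\times D^{14})\cup_{S^{13}\times S^{13}}(D^{14}\times S^{13})\longrightarrow X_1
$$
which equals $\alpha\times\chi$ into $S^2\times Q$ on $S^{13}\times D^{14}$ and $\chi\times\alpha$ into $Q\times S^2$ on $D^{14}\times S^{13}$. Composing with $s|_{X_1}$, which on $Q\times S^2$ is the product section $\sigma\times\sigma_0$ of Iwase's standard section $\sigma:Q\to G_2Q$ with a section $\sigma_0:S^2\to G_1S^2$, yields $(\sigma\chi)\times(\sigma_0\alpha)$ on $D^{14}\times S^{13}$ and its mirror image on the other half. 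As in Lemma~\ref{M}, the join decomposition $S^{27}=S^{13}*S^{13}$ together with the natural maps $\eta_{i,j}:F_iQ*F_jQ\to\hat F_{i+j+1}(Q\times Q)$ should let $\phi$ be written as a composite of an $\eta_{i,j}$ with a join of lifts of the two halves into the respective fibers. On each half one of those join factors is $\sigma\circ\chi$, a map out of the contractible disk $D^{14}$, hence null, which kills the ``smash part'' of the obstruction; the two halves are glued over $S^{13}\times S^{13}$, where the map is built from $\alpha\times\alpha$. Tracking this through the Berstein-Hilton invariants, the residual class should be a suspension $\Sigma^{k}\psi$ of $\psi=\alpha_1(3)\circ\alpha_2(6)$ with $k\ge 4$ --- each passage through an $\eta_{i,j}$ and through the $D^{14}$-factor contributing extra suspensions, exactly as $H^\sigma_2(\psi)*1_{S^1}=\Sigma^2H^\sigma_2(\psi)$ behaved in Lemma~\ref{M} --- and $\Sigma^{k}\psi=0$ for $k\ge 4$ by Proposition~\ref{toda-p}. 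Once $\phi\simeq 0$, Proposition~\ref{extract} produces a section over $Q\times Q$ restricting to $s|_{X_1}$; pushing it forward and gluing with $s$ along $X_1$ gives the required lift $s_3$, which extends $s$ over all of $X_2$.

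The main obstacle is the middle step, the precise identification of $\phi$. In Lemma~\ref{M} the attached cell lived in $M_3\times S^2$, so one could invoke Iwase's proof of $\cat(M_3\times S^2)\le 3$ essentially verbatim; here the $28$-cell is the top cell of $Q\times Q$ and $\bar\alpha$ mixes both $14$-cells, so one must combine the standard section $\sigma$ on one factor with the fiberwise-join structure on the other while keeping careful track of the overlap $S^2\times S^2$, over which the two product sections are only homotopic rather than equal, and verify that the various homotopies assemble into a single homotopy-commutative diagram. This is precisely where the arithmetic of $\psi$, namely the vanishing $\Sigma^4\psi=0$ from Proposition~\ref{toda-p}, is genuinely needed in this lemma, just as in Lemma~\ref{M}.
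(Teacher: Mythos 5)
Your overall plan --- solve the Fibration--Cofibration lifting problem over $Q\times Q$ via Proposition~\ref{extract}, identify the obstruction $\phi:S^{27}\to\hat F_3(M_3\times M_3)$ using the join decomposition $S^{27}\cong S^{13}\ast S^{13}$ and the connecting maps $\eta_{i,j}$, and kill it with the vanishing $\Sigma^4\psi=0$ --- is the right shape of the argument. But the step you flag as ``the main obstacle'' is a genuine gap, not a matter of bookkeeping, and the specific mechanism you propose for closing it is not quite right.

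You suggest that the null-homotopy comes from the disk factors being contractible, which ``kills the smash part.'' That is not how it works: the smash part \emph{is} the obstruction, and it does not vanish for formal reasons. What the paper does is reproduce Harper's homotopy-commutative diagram (from his proof of $\cat(Q\times Q)\le 3$), which exhibits $\phi$ as the composite $\hat j\circ\eta_{1,1}\circ\theta$ with
$$\theta=H_1^\sigma(\alpha)\ast H_1^\sigma(\alpha):S^{13}\ast S^{13}\longrightarrow F_1Q\ast F_1Q,$$
the fiberwise join of the Berstein--Hilton invariant with itself, induced by the section $s'=\sigma\times\sigma_0\cup\sigma_0\times\sigma$ on $Q\times S^2\cup S^2\times Q$. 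The decisive computational input you are missing is the Barratt--Hilton formula (\cite{T}, Proposition 3.1), which converts the join of two maps into the suspension of their smash:
$$H_1^\sigma(\alpha)\ast H_1^\sigma(\alpha)\;=\;\Sigma\bigl(H_1(\alpha)\wedge H_1(\alpha)\bigr)\;=\;\Sigma\bigl(\Sigma^2H_1(\alpha)\circ\Sigma^{13}H_1(\alpha)\bigr).$$
This is the suspension analogue of the fact $f\ast 1_{S^1}=\Sigma^2 f$ that you used in Lemma~\ref{M}, but for a join of two nontrivial maps it is a real identity, not a triviality. Only after that rewriting does $\Sigma^4H_1(\alpha)=\Sigma^4\psi=0$ (Proposition~\ref{toda-p}) force $\theta\simeq 0$, and hence $\phi\simeq 0$, by factoring the composite through a null map. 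In short: Harper's diagram supplies the precise identification of $\phi$ that you leave open, and Barratt--Hilton supplies the algebraic step that makes the arithmetic of $\psi$ applicable; neither is supplied by, nor replaceable by, the ``disk is contractible'' observation.
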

\begin{proof}
The construction of $s_3$ is based on Harper's proof~\cite{H} of the inequality $\cat(Q\times Q)\le 3$. 
We consider  the homotopy commutative diagram from~\cite{H} completed by the right commutative square
$$
\begin{CD}
F_1Q\ast F_1Q @>>> G_2Q\times G_1Q\cup G_1Q\times G_2Q@>\subset>> \bar X_2 \\
@AAH_1^\sigma(\alpha)\ast H_1^\sigma(\alpha)A @AA s' A @ AsAA\\
S^{13}\ast S^{13} @>\bar\alpha>> Q\times S^2\cup S^2\times Q @>\subset>> X_2\\
\end{CD}
$$
where $s'$ is the restriction of $s$.
 By the Barratt-Hilton formula (\cite{T}, Proposition 3.1) we obtain $$H_1^\sigma(\alpha)\ast H_1^\sigma(\alpha)=\Sigma (H_1(\alpha)\wedge H_1(\alpha))=
\Sigma(\Sigma^2H_1(\alpha)\circ \Sigma^{13}H_1(\alpha)).$$ Since $\Sigma^4H_1(\alpha)=0$, we obtain that the map $\theta=H_1^\sigma(\alpha)\ast H_1^\sigma(\alpha)$ is null-homotopic.

Again, we apply Proposition~\ref{extract} to the homotopy commutative diagram
$$
\begin{CD}
\hat F_3(M_3\times M_3) @>>> \hat G_3(M_3\times M_3)\\
@A\phi AA  @AsAA\\
S^{28} @>\tilde\alpha>> X_2
\end{CD}
$$
with null-homotopic $\phi=\hat j\circ\eta_{1,1}\circ\theta$ to obtain a lift $s_3$. Here $\eta_{1,1}:F_1Q\ast F_1Q\to\hat F_3(Q\times Q)$ and $\hat j:\hat  F_3(Q\times Q)\to \hat  F_3(M\times M_3)$ is the inclusion induced by the inclusion $Q\times Q\subset M_3\times M_3$.
\end{proof}

\begin{prop}
$\cat(M_2\times M_3)\ge 4$.
\end{prop}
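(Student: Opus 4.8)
The goal is to show $\cat(M_2\times M_3)\ge 4$, equivalently that the Ganea fibration $p_3\colon G_3(M_2\times M_3)\to M_2\times M_3$, or rather its cofibration-friendly substitute $\hat p_3$, admits no section. The plan is to locate an obstruction living on a single top-dimensional cell of $M_2\times M_3$ and to show it is nonzero via the Berstein--Hilton machinery, using that the $M_2$-factor is ``harder'' than the $M_3$-factor: $\Sigma^2 H_1(\alpha_2)$ survives suspension much longer (it dies only after $\Sigma^6$) than $\Sigma^2 H_1(\alpha_3)$ (which dies after $\Sigma^4$). Concretely, I would argue by contradiction: assume $\cat(M_2\times M_3)\le 3$, so $\hat p_3$ has a section, and derive that some Berstein--Hilton-type invariant attached to a cell of $M_2\times M_3$ must vanish, contradicting Proposition~\ref{toda}.

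The key steps, in order. First, fix the product CW structure: $M_2=S^2\cup_{\alpha_2}e^{14}\cup_{\psi_2}e^{16}$ and $M_3=S^2\cup_{\alpha_3}e^{14}\cup_{\psi_3}e^{16}$, with $Q_i=S^2\cup_{\alpha_i}e^{14}$. Second, reduce to a subcomplex: the interesting obstruction should already appear on $M_2\times S^2$ sitting inside $M_2\times M_3$, or on the cell $e^{14}\times e^{14}$ of $Q_2\times Q_3$; since $\cat(M_2\times S^2)=4$ by Proposition~\ref{M2}(3), a section of $\hat p_3$ over $M_2\times M_3$ would restrict (after composing with the lift $\lambda$ and using Proposition~\ref{old} to transfer back to genuine Ganea sections, exactly as in the proof of Proposition~\ref{IwQ3}) to a section of the $3$rd Ganea fibration of $M_2\times S^2$, forcing $\cat(M_2\times S^2)\le 3$, a contradiction. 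The main point is then to verify that the restriction is legitimate: the inclusion $M_2\times S^2\hookrightarrow M_2\times M_3$ is highly connected (an $n$-equivalence for $n$ at least $\dim(M_2\times S^2)+2 = 20$), so by Proposition~\ref{old} the induced map on $3$rd Ganea fibers is a $(2\cdot 3 + n - 1)$-equivalence with $n\ge 20$, hence an isomorphism in the relevant range, and a section downstairs pulls back through the pull-back square to a section of $G_3(M_2\times S^2)\to M_2\times S^2$ because $\dim(M_2\times S^2)=18$ is below the connectivity bound.

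Thus the skeleton of the argument is: (a) assume $\cat(M_2\times M_3)\le 3$; (b) via $\hat p_3$, its lift $\mu$, the lift $\lambda$ of $\hat p_3$ through $p_3$, and the pull-back/obstruction argument of Proposition~\ref{IwQ3}, obtain a section of $p_3^{M_2\times M_3}$; (c) restrict along the $20$-equivalence $M_2\times S^2\hookrightarrow M_2\times M_3$ and use Proposition~\ref{old} plus the dimension count $\dim(M_2\times S^2)=18 < 2\cdot 3 + 20 - 1$ to conclude $p_3^{M_2\times S^2}$ has a section, i.e.\ $\cat(M_2\times S^2)\le 3$; (d) invoke Proposition~\ref{M2}(3), $\cat(M_2\times S^2)=4$, for the contradiction.

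The step I expect to be the main obstacle is (b)--(c): one must be careful that the substitute fibration $\hat p_3$ over the product, together with the various lifts $\mu$, $\lambda$, behaves well under restriction to a subproduct — specifically that $\hat G_3(M_2\times S^2)\subset \hat G_3(M_2\times M_3)$ compatibly with the projections and with $\lambda$, so that a section over the big product genuinely restricts to a section over $M_2\times S^2$ before one even invokes Proposition~\ref{old}. Once that naturality is pinned down, the connectivity estimate from Proposition~\ref{old} and the crude dimension bound $18<25$ finish things with no further computation; in particular no new homotopy-groups-of-spheres input is needed beyond what Proposition~\ref{M2} already supplies. An alternative, if the naturality of $\hat p_3$ proves awkward, is to run the obstruction-theoretic argument directly: a section of $\hat p_3$ over $M_2\times M_3$ restricted to the subcomplex $(M_2\times\ast)\cup(Q_2\times S^2)$ must, by Proposition~\ref{extract} applied to the attaching map of the top cell of $M_2\times S^2$, kill a Berstein--Hilton class of the form $\beta\circ\Sigma^{4}H_1(\alpha_2)$; but $H_1(\alpha_2)=\phi$ with $\Sigma^5\phi\ne 0$ by Proposition~\ref{toda}, and tracking through the James--Hopf/Barratt--Hilton identities as in Lemma~\ref{M} shows this class is in fact a nonzero multiple of $\Sigma^5\phi$ (or of $h_2(\alpha_2)$ suspended appropriately), contradicting vanishing. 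Either route reduces the proposition to the already-established asymmetry between $M_2$ and $M_3$.
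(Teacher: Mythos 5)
Your approach is essentially the paper's: you restrict a hypothetical section of the third Ganea fibration of $M_2\times M_3$ to $M_2\times S^2$ via Proposition~\ref{old}, deduce $\cat(M_2\times S^2)\le 3$, and contradict Proposition~\ref{M2}(3). The paper interposes the $18$-skeleton $X^{(18)}$ and Stanley's theorem, largely to set up the remark that follows about the filtration $X^{(18)}\subset X^{(28)}\subset X^{(30)}\subset X$, but that detour is not essential to the proposition; your direct route is fine. Your proposed excursion through $\hat p_3$, $\mu$, and $\lambda$ is also unnecessary here — the paper works with the genuine Ganea fibrations $p_3$ throughout — but that is an inefficiency, not an error.

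There is, however, a genuine computational mistake in the connectivity estimate, and you should not let it pass unexamined even though the argument survives it. You claim the inclusion $M_2\times S^2\hookrightarrow M_2\times M_3$ is a $20$-equivalence because all new cells would have dimension at least $\dim(M_2\times S^2)+2=20$. That is false: $M_3=S^2\cup e^{14}\cup e^{16}$, so the cell $\ast\times e^{14}$ (and likewise $\ast\times e^{16}$, $e^2\times e^{14}$, $e^2\times e^{16}$) lies in $M_2\times M_3$ but not in $M_2\times S^2$, and its dimension is only $14$. Hence the inclusion is a $13$-equivalence, exactly as stated in the paper. With the correct value $s=13$, Proposition~\ref{old} gives an $18$-equivalence $F_3(M_2\times S^2)\to F_3(M_2\times M_3)$, whose homotopy fiber is $17$-connected; since $\dim(M_2\times S^2)=18$, all obstructions to lifting the section land in groups $H^{i+1}(M_2\times S^2;\pi_i)$ with $i\le 17$, which vanish. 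So the conclusion holds, but by the slimmest possible margin ($18\le 18$), not with the slack ($18<25$) you believed you had. The secondary ``alternative'' route via an explicit Berstein--Hilton computation for the cell $e^{14}\times e^{14}$ is not developed enough to assess and is not what the paper does; the clean finish is the one above, with the corrected equivalence count.
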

\begin{proof}
We recall that both $M_2$ and $M_3$ have CW complex structures with one cell in each of the  dimensions 0,2,14, and 16. Consider the product CW-complex structure on $X=M_2\times M_3$. We show that $\cat X^{(18)}=4$. Then by Theorem~\ref{Stanley} $\cat X\ge 4$.
 Assume the contrary: $\cat X^{(18)}\le 3$. Consider the pull-back diagram generated by the 3-rd Ganea fibrations and the inclusion $j:M_2\times S^2\to X^{(18)}$
$$
\begin{CD}
G_3(M_2\times S^2) @>\xi>> Z @>>> G_3(X^{(18)})\\
@. @Vp'VV @Vp_3VV\\
@. M_2\times S^2 @>j>>  X^{(18)}\\
\end{CD}
$$
where $p'\circ\xi=p_3^{M_2\times S^2}$. By the assumption there is a section of $p_3$ which induces a section of $p'$.
Since the inclusion $j:M_2\times S^2\to X^{(18)}$ is a 13-equivalence and the spaces are 1-connected, by Proposition~\ref{old}  the mapping between fibers of $F_3(M_2\times S^2)\to F_3X^{(18)}$
is a $(2\times 3+13-1)$-equivalence. Hence, the homotopy fiber of $\xi$ is 18-connected. Since $\dim(M_2\times S^2)=18$, the section of $p'$ can be lifted to a section of $p^{M_2\times S^2}_3$.
This implies a contradiction: $\cat(M_2\times S^2)\le 3$. 
\end{proof}
\begin{rem} By Theorem~\ref{Stanley} the proper filtration $X^{(18)}\subset X^{(28)}\subset X^{(30)}\subset X$ defines a chain of inequalities
$\cat X^{(18)}\le\cat X^{(28)}\le\cat X^{(30)}\le \cat X$. In view of Theorem~\ref{BH}, whether any of the above inequalities is strict can be determined by Berstein-Hilton invariants. 
If one of  this three opportunities  for $\cat$ to jump up  is realized, then, as we show in the next section,
there is a counter-example to the Rudyak conjecture.
\end{rem}

\section{Rudyak Conjecture}

For a closed oriented manifold $M$ and $k\in\N$ by $kM$ we denote the connected sum of $k$ copies off $M$ and by $-kM$ we denote the connected sum $|k|\bar M$
where $\bar M$ is $M$ taken with the opposite orientation. The following is obvious
\begin{lem}\label{cof}
If $M_1,\dots, M_r$ are connected $n$-manifold, then there is a cofibration sequence
$$
\begin{CD}
\bigvee_{i=1}^{r-1}S^{n-1} @>>> \#_{j=1}^rM_j @>\psi>> \bigvee_{j=1}^rM_j.\\
\end{CD}
$$
\end{lem}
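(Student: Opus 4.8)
The plan is to realize the connected sum $\#_{j=1}^r M_j$ as the boundary configuration obtained by gluing the $M_j$ along a common $(n-1)$-sphere pattern, and then read off the cofibration sequence from that decomposition. Concretely, fix a small closed ball $D_j\subset M_j$ for each $j$, so that $M_j = (M_j\setminus \Int D_j)\cup_{S^{n-1}} D^n$. Let $M_j^\circ = M_j\setminus\Int D_j$; then $\#_{j=1}^r M_j$ is built by taking the $r$ punctured manifolds $M_j^\circ$ and identifying all of their boundary spheres with the $(r-1)$ "equatorial" spheres of a central piece. The cleanest model: form the double mapping picture in which there is a subcomplex $A\simeq \bigvee_{i=1}^{r-1}S^{n-1}$ sitting inside $\#_{j=1}^r M_j$ — namely a chain of $r-1$ separating spheres — and collapsing $A$ identifies the connected sum with the one-point union $\bigvee_{j=1}^r M_j$, because collapsing the $i$-th separating sphere reattaches the $D^n$-patch that was removed from $M_i$ (and $M_{i+1}$), turning each $M_j^\circ$ back into $M_j$ and leaving exactly a wedge point.

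The key steps, in order, are as follows. First I would make precise the subspace $A\hookrightarrow \#_{j=1}^r M_j$: choose $r-1$ pairwise disjoint, two-sided, separating $(n-1)$-spheres $S_1,\dots,S_{r-1}$ such that cutting along all of them yields the disjoint pieces $M_j^\circ$ (with $M_1^\circ$ and $M_r^\circ$ each meeting one sphere, and each interior $M_j^\circ$, $2\le j\le r-1$, meeting two). Up to homotopy these spheres can be taken with a single common point, so their union is $\bigvee_{i=1}^{r-1}S^{n-1}$. Second, I would identify the quotient $\left(\#_{j=1}^r M_j\right)/A$. Collapsing $A$ collapses each $S_i$ to a point; since $S_i$ bounds the puncture-disk boundary on both sides, collapsing it caps off the two adjacent punctured manifolds with cones on $S^{n-1}$, i.e.\ with $n$-disks, recovering $M_i$ and a boundary-connected neighbor; after collapsing all of $A$ the pieces $M_j^\circ$ become honest $M_j$ and the only identifications remaining are the images of the collapsed spheres, which are all glued to the single wedge point. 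Hence $\left(\#_{j=1}^r M_j\right)/A \cong \bigvee_{j=1}^r M_j$. Third, since $A$ is a CW subcomplex of the CW complex $\#_{j=1}^r M_j$ (the spheres can be taken to be subcomplexes in a suitable cell structure) the inclusion $A\hookrightarrow \#_{j=1}^r M_j$ is a cofibration, and the standard cofiber sequence of a CW pair $(X,A)$ gives exactly
$$
\bigvee_{i=1}^{r-1}S^{n-1} \longrightarrow \#_{j=1}^r M_j \stackrel{\psi}\longrightarrow \bigvee_{j=1}^r M_j,
$$
where $\psi$ is the collapse map, as claimed.

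The main obstacle is purely the bookkeeping in the quotient identification — verifying carefully that collapsing the separating spheres genuinely reinstalls the removed $n$-disks and produces the wedge of the original closed manifolds rather than some manifold with different gluing data. For $r=2$ this is the familiar statement that $M_1\# M_2$ with the connecting $S^{n-1}$ collapsed is $M_1\vee M_2$; for general $r$ one proceeds by induction on $r$, splitting off $M_r$ via $\#_{j=1}^r M_j = \left(\#_{j=1}^{r-1}M_j\right)\# M_r$ and using the $r=2$ case together with the induction hypothesis, the wedge on $r-1$ summands plus one more separating sphere assembling to the wedge on $r$ summands and $\bigvee_{i=1}^{r-2}S^{n-1}\vee S^{n-1}=\bigvee_{i=1}^{r-1}S^{n-1}$. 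Everything else — the cofibration property and the resulting exact cofiber sequence — is formal.
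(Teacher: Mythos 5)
Your proof is correct, and since the paper dismisses this lemma as ``obvious'' and offers no argument at all, any careful verification is worth having. You should be aware, though, that the model you chose (a chain of $r-1$ separating spheres) differs from the picture the paper tacitly uses later, in the proof of Proposition~\ref{one}: there the wedge of spheres is \emph{embedded in a single summand}, say $M_1$, bounding $r-1$ small disjoint balls that touch at a common boundary point, and the remaining punctured summands $M_j^\circ$ ($j\ge 2$) are glued along those boundary spheres. That ``star'' picture has the advantage that the wedge $\bigvee_{i=1}^{r-1}S^{n-1}$ is literally embedded from the start, and each ball it bounds is coned off to a point upon collapsing, so the identification of the cofiber with $\bigvee_j M_j$ is immediate. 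Your chain picture works too, but it carries a small hidden subtlety that you wave at but don't resolve: $r-1$ pairwise disjoint separating spheres do not form a wedge, and if you bring consecutive spheres into tangency you still only get a ``chain of pearls'' ($S_i$ meets $S_{i+1}$ in one point, but $S_i\cap S_j=\varnothing$ for $|i-j|\ge 2$), which is homotopy equivalent to $\bigvee_{i=1}^{r-1}S^{n-1}$ but is not a literal wedge at a single common point. This is harmless for producing the cofiber sequence up to homotopy — the inclusion of that chain into $\#_j M_j$ is a cofibration and its cofiber is indeed $\bigvee_j M_j$, because collapsing the chain caps off each $M_j^\circ$ by a contractible union of one or two tangent balls — but your phrase ``these spheres can be taken with a single common point'' overstates what a straightforward isotopy gives you. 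Your inductive reduction to $r=2$ is a cleaner way to sidestep this and is fine as written.
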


A special case of the following proposition was proven in~\cite{Dr}.

\begin{prop}\label{one}
Suppose that $g:N_1\to M_1$ and $h:N_2\to M_2$ are maps between closed manifolds of degree $p$ and $q$ for mutually prime $p$ and $q$. 
Then there are  $k,\ell\in \Z$ and a degree one map
$$
f:k(M_1\times N_2)\#\ell(N_1\times M_2)\to M_1\times M_2.
$$
\end{prop}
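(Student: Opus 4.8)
The plan is to produce the map degree-by-degree using the cofibration sequence of Lemma~\ref{cof} together with the hypothesis that $p$ and $q$ are coprime. Choose integers $a,b$ with $ap+bq=1$. The idea is that $a$ parallel copies of $g\times \id_{N_2}$ and $b$ parallel copies of $\id_{N_1}\times h$ give, on a suitable connected sum of products, a map to $M_1\times M_2$ whose degree is $ap+bq=1$; the only subtlety is that $g\times\id$ lands in $M_1\times N_2$, not $M_1\times M_2$, so one must first push $N_2$ forward to $M_2$ and $N_1$ forward to $M_1$ in a way that glues consistently. Concretely, I would build $f$ as a composite of several explicitly described maps, each of which is either a pinch map onto a wedge (Lemma~\ref{cof}), a wedge of copies of $\id\times h$ and $g\times\id$, or a fold/collapse identification, keeping careful track of orientations (whence the signs, hence negative $k$ or $\ell$ are allowed).

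First I would set $N=N_1\times N_2$ and note that both $g\times\id_{N_2}:N\to M_1\times N_2$ and $\id_{N_1}\times h:N\to N_1\times M_2$ have degree $p$ and $q$ respectively. Next, compose $g\times\id_{N_2}$ with $\id_{M_1}\times h: M_1\times N_2\to M_1\times M_2$ (degree $q$) to get a degree-$pq$ map $N\to M_1\times M_2$; similarly $\id_{N_1}\times h$ followed by $g\times\id_{M_2}$ gives another degree-$pq$ map — that is not yet what we want, so instead I would be more economical. Take $|a|$ disjoint copies of $N\times N_2$-type blocks and $|b|$ disjoint copies; precisely, form the connected sum $k(M_1\times N_2)\#\ell(N_1\times M_2)$ with $k,\ell$ the signs-adjusted multiples of $a,b$, pinch it onto the wedge $\bigvee^k (M_1\times N_2)\vee\bigvee^\ell(N_1\times M_2)$ by Lemma~\ref{cof} (the wedge of the complementary spheres maps in trivially after we compose with the next map, since $M_1\times M_2$ is simply connected in the relevant range and the spheres are lower-dimensional — more carefully, the wedge summand $\bigvee S^{n-1}$ composed with any map to the $n$-manifold $M_1\times M_2$ can be taken null-homotopic, so it does not affect the degree), then on each wedge summand apply $\id_{M_1}\times h$ (on the $M_1\times N_2$ summands) or $g\times\id_{M_2}$ (on the $N_1\times M_2$ summands) to land in $M_1\times M_2$, and finally fold the resulting wedge $\bigvee^{k+\ell}(M_1\times M_2)\to M_1\times M_2$. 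The total degree is $k q\cdot(\text{something})$ — here I must be careful: $\id_{M_1}\times h$ has degree $q$ and $g\times\id_{M_2}$ has degree $p$, so the degree of $f$ is $kq+\ell p$, and I choose $k,\ell$ (allowing negative values via orientation reversal) so that $kq+\ell p=1$, which is possible exactly because $\gcd(p,q)=1$.

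The main obstacle I anticipate is making the pinch/fold construction honest as a map of closed manifolds rather than of CW complexes: the connected sum $k(M_1\times N_2)\#\ell(N_1\times M_2)$ is a closed $n$-manifold ($n=\dim M_1+\dim M_2 = \dim N_1+\dim N_2$, using that $g,h$ are maps of closed manifolds of the same dimension as their targets), and one needs the pinch map to that wedge and the fold map out of the wedge to be compatible with the smooth/PL structure and to behave correctly on fundamental classes. This is handled by the standard observation that collapsing the boundary sphere of a removed disk gives a degree-one map $M\#N\to M\vee N$ inducing $[\mu_M]+[\mu_N]$ on top homology (Lemma~\ref{cof}), and dually the fold map $M\vee M\to M$ sends each $[\mu_M]$ to $[\mu_M]$; the lower-dimensional wedge $\bigvee S^{n-1}$ contributes nothing to degree. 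Assembling these and reading off the top homology class, $f_*[\,\cdot\,]=(kq+\ell p)[\mu_{M_1\times M_2}]=[\mu_{M_1\times M_2}]$, so $f$ has degree one, as required.
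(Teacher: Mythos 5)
Your proof is correct and follows essentially the same route as the paper: pinch the connected sum onto a wedge via Lemma~\ref{cof}, send each $M_1\times N_2$ summand to $M_1\times M_2$ by $1\times h$ and each $N_1\times M_2$ summand by $g\times 1$, and use coprimality of $p$ and $q$ to pick $k,\ell$ with $kq+\ell p=1$. The opening detour through degree-$pq$ compositions and the aside about the wedge of $(n-1)$-spheres needing to be null-homotopic are superfluous (the pinch map exists unconditionally and one simply reads off the effect on top homology), but they do not affect the correctness of the argument you ultimately give.
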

\begin{proof}
Let $\dim N_1=n_1$ and $\dim N_2=n_2$. Take $k$ and $\ell$ such that $\ell p+kq=1$.
We may assume that the above connected sum is obtained by taking the wedge of $(|k|+|\ell|-1)$ copies of $(n_1+n_2-1)$-spheres embedded in one of the summands and gluing all other summands along those spheres.
Consider the cofibration map from Lemma~\ref{cof}
$$
\psi:k(M_1\times N_2)\# \ell(N_1\times M_2) 
\to\bigvee_k(M_1\times N_2)\vee \bigvee_\ell (N_1\times M_2).
$$
Let the map $$\phi:\bigvee_k(M_1\times N_2)\vee \bigvee_\ell (N_1\times M_2) \to M_1\times M_2$$
be defined as the union 
$$
\phi=\bigcup_k(1\times g)\cup\bigcup_\ell(h\times 1).
$$
The degree of $h\times 1$ is $p$ and  the degree of $1\times g$ is $q$. Hence the degree of
$f=\phi\circ\psi$ is $\ell p+kq=1$.
\end{proof}

\begin{thm}\label{rudyak}
Suppose that $g:N_1\to M_2$ and $h:N_2\to M_2$ are maps between closed manifolds of degree $p$ and $q$ for mutually prime $p$ and $q$ and 
$$\max\{\cat(M_1\times N_2),\cat(N_1\times M_2)\}<\cat(M_1\times M_2).$$
Then there is a counter-example to Rudyak's Conjecture.
\end{thm}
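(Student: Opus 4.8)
The plan is to combine Proposition~\ref{one} with the hypothesis on categories. First I would apply Proposition~\ref{one} to the two degree-$p$ and degree-$q$ maps $g,h$: since $\gcd(p,q)=1$, there exist integers $k,\ell$ with $\ell p+kq=1$ and a degree one map
$$
f:k(M_1\times N_2)\#\ell(N_1\times M_2)\to M_1\times M_2.
$$
Call the domain $P=k(M_1\times N_2)\#\ell(N_1\times M_2)$. The idea is that if Rudyak's conjecture held, then we would need $\cat P\ge\cat(M_1\times M_2)$, and I would derive a contradiction with the category hypothesis.

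Next I would bound $\cat P$ from above. By the connected-sum formula $(\ast)$, $\cat(M\# N)=\max\{\cat M,\cat N\}$; iterating this over the $(|k|+|\ell|)$ summands (each a copy of $M_1\times N_2$ or of $N_1\times M_2$, possibly with reversed orientation, which does not affect $\cat$) gives
$$
\cat P=\max\{\cat(M_1\times N_2),\cat(N_1\times M_2)\}.
$$
Here one should note the degenerate cases: if $k=0$ then $P=\ell(N_1\times M_2)$ and $\cat P=\cat(N_1\times M_2)$, similarly if $\ell=0$; these are still covered by the displayed maximum (with the convention that an empty connected summand contributes nothing), and at least one of $k,\ell$ is nonzero since $\ell p+kq=1$. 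So in all cases $\cat P=\max\{\cat(M_1\times N_2),\cat(N_1\times M_2)\}$, which by hypothesis is strictly less than $\cat(M_1\times M_2)$.

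Finally I would assemble the contradiction. The map $f:P\to M_1\times M_2$ is a degree one map between closed (oriented) manifolds. If Rudyak's conjecture were true, it would force $\cat P\ge\cat(M_1\times M_2)$; but we have just shown $\cat P=\max\{\cat(M_1\times N_2),\cat(N_1\times M_2)\}<\cat(M_1\times M_2)$, a contradiction. Hence $f$ is a counter-example to Rudyak's conjecture. The only point requiring slight care—and the place I expect to spend the most thought—is making sure the connected sum $P$ is genuinely a closed oriented manifold to which both the formula $(\ast)$ and the statement of Rudyak's conjecture apply, and that orientations are chosen so that $f$ has degree exactly one rather than merely nonzero; this is handled by the explicit construction in Proposition~\ref{one}, where the signs $k,\ell$ are built into the wedge-and-glue description and $\deg f=\ell p+kq=1$.
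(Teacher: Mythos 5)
Your proof is correct and follows essentially the same route as the paper's: invoke Proposition~\ref{one} for the degree one map and then the connected-sum formula $(\ast)$ to bound $\cat$ of the domain by $\max\{\cat(M_1\times N_2),\cat(N_1\times M_2)\}$, which the hypothesis makes strictly less than $\cat(M_1\times M_2)$. The extra care you take with the degenerate cases $k=0$ or $\ell=0$ and with orientations is a welcome refinement but does not change the argument.
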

\begin{proof}
By Proposition~\ref{one} there is a degree one map
$$
f:k(M_1\times N_2)\#\ell(N_1\times M_2)\to M_1\times M_2.
$$
By the connected sum formula~($\ast$), $$\cat(k(M_1\times N_2)\#\ell(N_1\times M_2))\le\max\{\cat(M_1\times N_2,\cat(N_1\times M_2)\}.$$
\end{proof}

\begin{cor}
If $\cat(M_2\times M_3)\ge 5$, then there is a counter-example to Rudyak's conjecture.
\end{cor}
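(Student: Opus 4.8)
The plan is to combine the degree-one map produced in Proposition~\ref{one} with the connected-sum formula $(\ast)$ and to apply this to the specific manifolds $M_2$ and $M_3$. First I would recall from Proposition~\ref{M2}(4) and Proposition~\ref{M3}(3) that there are maps $g:S^{14}\times S^2\to M_2$ of degree $2$ and $h:S^{14}\times S^2\to M_3$ of degree $3$. Since $\gcd(2,3)=1$, Proposition~\ref{one} (with $M_1=M_2$, $N_1=S^{14}\times S^2=N_2$, $M_2=M_3$ in that proposition's notation) yields integers $k,\ell$ with $2\ell+3k=1$ and a degree-one map
$$
f:k\bigl(M_2\times S^2\times S^{14}\bigr)\#\ell\bigl(S^{14}\times S^2\times M_3\bigr)\longrightarrow M_2\times M_3.
$$
Taking $(k,\ell)=(1,-1)$ (so $2(-1)+3(1)=1$) recovers exactly the map $f:2(M_3\times S^2\times S^{14})\#-(M_2\times S^2\times S^{14})\to M_2\times M_3$ advertised in the abstract, after reordering the Cartesian factors.

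Next I would invoke Theorem~\ref{rudyak}: its hypotheses are that the two degrees are coprime (checked) and that
$$
\max\{\cat(M_2\times S^2\times S^{14}),\ \cat(S^{14}\times S^2\times M_3)\}<\cat(M_2\times M_3).
$$
So the remaining task is to bound the two category terms on the left by $4$. By Proposition~\ref{M2}(1), $\cat(M_2\times S^{14})=3$, hence $\cat(M_2\times S^2\times S^{14})=\cat((M_2\times S^{14})\times S^2)\le\cat(M_2\times S^{14})+\cat S^2=3+1=4$; similarly, by Proposition~\ref{M3}(1), $\cat(M_3\times S^2)=3$, so $\cat(M_3\times S^2\times S^{14})\le 3+\cat S^{14}=4$. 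Therefore the hypothesis of Theorem~\ref{rudyak} is satisfied as soon as $\cat(M_2\times M_3)\ge 5$, and in that case Theorem~\ref{rudyak} produces a degree-one map from a manifold of category $\le 4$ onto $M_2\times M_3$ of category $\ge 5$ — a counter-example to Rudyak's conjecture.

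The only real subtlety is bookkeeping: one must check that the manifold $k(M_2\times S^2\times S^{14})\#\ell(S^{14}\times S^2\times M_3)$ is the same (up to orientation-preserving diffeomorphism and reordering of smooth factors) as the one in the statement, and that the connected-sum formula $(\ast)$ applies to it to give $\cat\le 4$; both are routine given the cited results. I do not expect any genuine obstacle here — the corollary is essentially a repackaging of Theorem~\ref{rudyak} plus the two elementary category estimates above — so the proof is short.

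\begin{proof}
By Proposition~\ref{M2} there is a degree $2$ map $g:S^{14}\times S^2\to M_2$, and by Proposition~\ref{M3} there is a degree $3$ map $h:S^{14}\times S^2\to M_3$. Since $2$ and $3$ are mutually prime, Proposition~\ref{one} gives a degree one map
$$
f:k(M_2\times S^{14}\times S^2)\# \ell(S^{14}\times S^2\times M_3)\to M_2\times M_3
$$
with $2\ell+3k=1$; choosing $k=1$, $\ell=-1$ this is, after reordering of factors, the map of the abstract. By the sub\-additivity of $\cat$ and Proposition~\ref{M2}(1),
$$
\cat(M_2\times S^{14}\times S^2)\le\cat(M_2\times S^{14})+\cat(S^2)=3+1=4,
$$
and by Proposition~\ref{M3}(1),
$$
\cat(M_3\times S^2\times S^{14})\le\cat(M_3\times S^2)+\cat(S^{14})=3+1=4.
$$
Hence if $\cat(M_2\times M_3)\ge 5$ the hypothesis $\max\{\cat(M_2\times S^{14}\times S^2),\cat(S^{14}\times S^2\times M_3)\}<\cat(M_2\times M_3)$ of Theorem~\ref{rudyak} holds, and Theorem~\ref{rudyak} yields a counter-example to Rudyak's Conjecture.
\end{proof}
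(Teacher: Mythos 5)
Your proof is correct and follows essentially the same route as the paper: combine the degree-two and degree-three maps from Propositions~\ref{M2} and~\ref{M3}, apply Proposition~\ref{one} to get a degree-one map from a connected sum, bound each summand's category by $4$ via subadditivity, and invoke Theorem~\ref{rudyak}. One small slip: with $p=2$, $q=3$ the abstract's map corresponds to $(k,\ell)=(-1,2)$, not $(1,-1)$ as you claim; either choice works for the corollary, but they do not give the same connected sum.
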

\begin{proof}
By Proposition~\ref{M2} and Proposition~\ref{M3} there are maps of degree two, $g:S^{14}\times S^2\to M_2$, and of degree three, $h:S^{14}\times S^2\to M_3$.
Then the map $$f:-(M_2\times S^{14}\times S^2)\#2(S^{14}\times S^2\times M_3)\to M_2\times M_3$$ of Proposition~\ref{one}
has degree one. We note that $$\cat(M_2\times S^{14}\times S^2)\le\cat(M_2\times S^{14})+\cat S^2=3+1=4$$ and
$\cat(S^{14}\times S^2\times M_3)\le\cat S^2+\cat(S^{14}\times M_3)=4$.
\end{proof}


\begin{thebibliography}{[Dr1]}

\bibitem[Ba]{Ba} A. Bassi, {\em Su alcuni nuovi invarianti delta varieta topologiche}, Ann. Mat. Pura Appl IV-1 (1937), 275-297.

\bibitem[BH]{BH} I.~Berstein, P.~J.~Hilton, \emph{Category and generalized Hopf invariant}, Illinois J.  Math., 4 (1960), 437-451. 

\bibitem[CLOT]{CLOT}
    O. Cornea, G. Lupton, J. Oprea, D. Tanre,
\newblock   { Lusternik-Schnirelmann Category},  AMS,  2003.

\bibitem [Dr]{Dr}
  A.~Dranishnikov, \emph{The LS category of the product of lens spaces}, Algebr. Geom. Topol. 15 (2015) no 5, 2985-3010.


\bibitem[DS1]{DS1}
A. Dranishnikov, R. Sadykov,
{\em On the LS-category and topological complexity of a connected sum}. Proc. Amer. Math. Soc. 147 (2019), no. 5, 2235-2244.

\bibitem[DS2]{DS2} 
A. Dranishnikov, R. Sadykov,
{\em The Lusternik-Schnirelmann category of connected sum}. Fund. Math, 251 (2020) no 3, 317-328.


\bibitem[DSc]{DSc} A. Dranishnikov, J. Scott, {\em Surgery Approach to Rudyak's Conjecture}, Top. Appl. to appear

\bibitem[F]{F} R.H. Fox, \emph{On the Lusternik-Schnirelmann category}, Ann. of Math. 42 (1941), 333-370.

\bibitem[Ga]{Ga} T. Ganea, \emph{Lusternik-Schnirelmann category and strong category}, Illinois J. Math. 11 (1967), 417-427.

\bibitem[H]{H} J. Harper, \emph{Category and products},  Rend. Sem. Mat. Fis. Milano 68 (1998), 165-177.

\bibitem[Ha]{Ha} A. Hatcher, Algebraic Topology,  Cambridge University Press, 2002.

\bibitem[Iw]{Iw} N.~Iwase \emph{Ganea's Conjecture on Lusternik-Schnirelmann category}, Bull. Lond. Math. Soc. 30, (1998), 623-634.

\bibitem[Iw1]{Iw1}  N.~Iwase, \emph{$A_\infty$-method in Lusternik-Schnirelmann category}, Topology 41 (2002), no. 4, 695-723.

\bibitem[Iw2]{Iw2} N.~Iwase, \emph{Lusternik-Schnirelmann category of a sphere-bundle over a
sphere},  Topology 42 (2003) 701-713.

\bibitem[Ru1]{Ru1}
Yu.  Rudyak,
{\em On category weight and its applications}.
Topology 38 (1999) no. 1, 37--55.

\bibitem[Ru2]{Ru2} Yu. Rudyak,
{\em Maps of degree 1 and Lusternik-Schnirelmann category}. Topology Appl. 221 (2017), 225–230.


\bibitem[Sch]{Sch}  A.~Schwarz, {\em The genus of a fibered space}. Trudy Moscov. Mat. Obsc. 10, 11 (1961 and 1962), 217-272, 99-126.

\bibitem[St1]{St1} D.~Stanley, \emph{Spaces with Lusternik-Schnirelmann category $n$ and cone length $n+1$}, Topology 39 (2000), 985-1019. 


\bibitem[St2]{St2} D. Stanley, {\em On the Lusternik-Schnirelman category of maps}, Canad. J. Math. Vol. 54 (3), (2002), 608-633.

\bibitem[SS]{SS} D. Stanley and J. Strom, {\em Lusternik-Schnirelmann category of products with half-smashes}. AGT, 20 (2020), 439-450.

\bibitem [T]{T}
    H. Toda,
\newblock   { Composition Methods in the Homotopy Groups of Spheres},  Ann. math. Studies, Princeton, 1962.


\end{thebibliography}
\end{document}